%    Documenta-LaTeX.sty (2e)
%
%    Version 0.5
%
%    Copyright (C) 1996-2010 Ulf Rehmann
%    rehmann@math.uni-bielefeld.de
%
% ------------------------------ PROLOGUE -----------------------------
%
% This prologue is to be prepended to your LaTeX file in order to
% give it the layout required by Documenta Mathematica.
% Please edit the \documentclass line below, add \usepackage lines
% according to your needs and
% go to the line starting with \Title below, fill in title,
% authors' names and so on and follow the further instructions there.
% 
% Choose one of the commands \documentclass[]{} or \documentstyle[]{}
% depending on the version of LaTeX you use:
%
%
% If you use LaTeX 2e use the following line (or similar):
\documentclass[10pt,twoside]{article}
\usepackage[english]{babel}
\usepackage{amsmath}
\usepackage[latin1]{inputenc}
\usepackage{amsfonts}
\usepackage{amssymb}
\usepackage{amsthm}
\usepackage{stmaryrd}
\usepackage{MnSymbol}
\usepackage{microtype}
\usepackage{mathrsfs}
\usepackage[all]{xy}
\usepackage{enumitem}

\theoremstyle{theorem}
\newtheorem{Theorem}{Theorem}[section]
\newtheorem{Proposition}[Theorem]{Proposition}
\newtheorem{Lemma}[Theorem]{Lemma}
\newtheorem{Conjecture}[Theorem]{Conjecture}
\newtheorem{Corollary}[Theorem]{Corollary}

\theoremstyle{definition} %definition
\newtheorem{Definition}[Theorem]{Definition}
\newtheorem{Definition*}[]{Definition}

\newtheorem{Remark}[Theorem]{Remark}

\setenumerate{label=\emph{(\alph*)}, ref=(\alph*)}
\DeclareMathOperator{\et}{{\textup{\'et}}}
\DeclareMathOperator{\tame}{{tame}}
\DeclareMathOperator{\cont}{{cont}}
\DeclareMathOperator{\ab}{{ab}}
\DeclareMathOperator{\Strat}{Strat}
\DeclareMathOperator{\Repf}{Repf}
\DeclareMathOperator{\rs}{rs}

\DeclareMathOperator{\Pic}{Pic}
\DeclareMathOperator{\Alb}{Alb}
\DeclareMathOperator{\alb}{alb}
\DeclareMathOperator{\Spec}{Spec}
\DeclareMathOperator{\Frac}{Frac}
\DeclareMathOperator{\Vectf}{Vectf}
\DeclareMathOperator{\Exp}{Exp}
\DeclareMathOperator{\rank}{rank}
\DeclareMathOperator{\Cl}{Cl}
\DeclareMathOperator{\coker}{coker}
\DeclareMathOperator{\id}{id}
\DeclareMathOperator{\red}{red}
\DeclareMathOperator{\NS}{NS}
\DeclareMathOperator{\Hom}{Hom}
\DeclareMathOperator{\alg}{alg}
\DeclareMathOperator{\Ext}{Ext}
\DeclareMathOperator{\Top}{top}
\DeclareMathOperator{\Char}{char}

\newcommand{\llparen}{(\!(}
\newcommand{\rrparen}{)\!)}
\newcommand{\Z}{\mathbb{Z}}
\newcommand{\G}{\mathbb{G}}
\newcommand{\F}{\mathbb{F}}

\newcommand{\Q}{\mathbb{Q}}
\newcommand{\N}{\mathbb{N}}
\newcommand{\C}{\mathbb{C}}
\newcommand{\A}{\mathbb{A}}
\renewcommand{\P}{\mathbb{P}}
\newcommand{\plim}{{\textstyle \varprojlim_p}}

% Append here commands \usepackage{...} if you want.
%
% If you use LaTeX 2.09 or earlier use the following line (or similar):
%\documentstyle[10pt,twoside]{article}
%
% Please go to the line starting with \Title below, fill in title,
% authors' names and so on and follow the further instructions there.
%
% The final values of the following 4 uncommented lines will be filled in
% by Doc. Math. after the manuscript has been accepted for publication.
%
\def\YEAR{\year}\newcount\VOL\VOL=\YEAR\advance\VOL by-1995
\def\firstpage{1}\def\lastpage{1000}
\def\received{}\def\revised{}
\def\communicated{}

\makeatletter
\def\magnification{\afterassignment\m@g\count@}
\def\m@g{\mag=\count@\hsize6.5truein\vsize8.9truein\dimen\footins8truein}
\makeatother

%%% Choose 10pt/12pt:
%\oddsidemargin1.66cm\evensidemargin1.66cm\voffset1.2cm%10pt
\oddsidemargin1.91cm\evensidemargin1.91cm\voffset1.4cm%10pt
%\magnification1200\oddsidemargin.41cm\evensidemargin.41cm\voffset-.75cm%12pt

%\textwidth12.5cm\textheight19.5cm
\textwidth12.0cm\textheight19.0cm

\font\eightrm=cmr8
\font\caps=cmcsc10                    % Theorem, Lemma etc
                   % Sections
\font\Caps=cmcsc10 scaled \magstep1   % Title

%
%-----------------headlines-----------------------------------

%\input german.sty      % uncomment if necessary
%\usepackage{german}    % uncomment if necessary

\pagestyle{myheadings}
\pagenumbering{arabic}
\setcounter{page}{\firstpage}

\makeatletter
\setlength\topmargin {14\p@}
\setlength\headsep   {15\p@}  
\setlength\footskip  {25\p@}  
\setlength\parindent {20\p@} 
\@specialpagefalse\headheight=8.5pt
\def\DocMath{}
\renewcommand{\@evenhead}{%
    \ifnum\thepage>\lastpage\rlap{\thepage}\hfill%
    \else\rlap{\thepage}\slshape\leftmark\hfill{\caps\SAuthor}\hfill\fi}%
\renewcommand{\@oddhead}{%
    \ifnum\thepage=\firstpage{\DocMath\hfill\llap{\thepage}}%
    \else{\slshape\rightmark}\hfill{\caps\STitle}\hfill\llap{\thepage}\fi}%
\makeatother

\def\TSkip{\bigskip}
\newbox\TheTitle{\obeylines\gdef\GetTitle #1
\ShortTitle  #2
\SubTitle    #3
\Author      #4
\ShortAuthor #5
\EndTitle
{\setbox\TheTitle=\vbox{\baselineskip=20pt\let\par=\cr\obeylines%
\halign{\centerline{\Caps##}\cr\noalign{\medskip}\cr#1\cr}}%
	\copy\TheTitle\TSkip\TSkip%
\def\next{#2}\ifx\next\empty\gdef\STitle{#1}\else\gdef\STitle{#2}\fi%
\def\next{#3}\ifx\next\empty%
    \else\setbox\TheTitle=\vbox{\baselineskip=20pt\let\par=\cr\obeylines%
    \halign{\centerline{\caps##} #3\cr}}\copy\TheTitle\TSkip\TSkip\fi%
%\setbox\TheTitle=\vbox{\let\par=\cr\obeylines%
%\halign{\centerline{\caps##} #4\cr}}\copy\TheTitle\TSkip\TSkip%
\centerline{\caps #4}\TSkip\TSkip%
\def\next{#5}\ifx\next\empty\gdef\SAuthor{#4}\else\gdef\SAuthor{#5}\fi%
\ifx\received\empty\relax
    \else\centerline{\eightrm Received: \received}\fi%
\ifx\revised\empty\TSkip%
    \else\centerline{\eightrm Revised: \revised}\TSkip\fi%
\ifx\communicated\empty\relax
    \else\centerline{\eightrm Communicated by \communicated}\fi\TSkip\TSkip%
\catcode'015=5}}\def\Title{\obeylines\GetTitle}
\def\Abstract{\begingroup\narrower
    \parskip=\medskipamount\parindent=0pt{\caps Abstract. }}
\def\EndAbstract{\par\endgroup\TSkip}

\long\def\MSC#1\EndMSC{\def\arg{#1}\ifx\arg\empty\relax\else
     {\par\narrower\noindent%
     2010 Mathematics Subject Classification: #1\par}\fi}

\long\def\KEY#1\EndKEY{\def\arg{#1}\ifx\arg\empty\relax\else
	{\par\narrower\noindent Keywords and Phrases: #1\par}\fi\TSkip}

\newbox\TheAdd\def\Addresses{\vfill\copy\TheAdd\vfill
    \ifodd\number\lastpage\vfill\eject\phantom{.}\vfill\eject\fi}
{\obeylines\gdef\GetAddress #1
\Address #2 
\Address #3
\Address #4
\EndAddress
{\def\xs{4.3truecm}\parindent=0pt
\setbox0=\vtop{{\obeylines\hsize=\xs#1\par}}\def\next{#2}
\ifx\next\empty % 1 address
     \setbox\TheAdd=\hbox to\hsize{\hfill\copy0\hfill}
\else\setbox1=\vtop{{\obeylines\hsize=\xs#2\par}}\def\next{#3}
\ifx\next\empty % 2 addresses
     \setbox\TheAdd=\hbox to\hsize{\hfill\copy0\hfill\copy1\hfill}
\else\setbox2=\vtop{{\obeylines\hsize=\xs#3\par}}\def\next{#4}
\ifx\next\empty\ % 3 addresses
     \setbox\TheAdd=\vtop{\hbox to\hsize{\hfill\copy0\hfill\copy1\hfill}
                \vskip20pt\hbox to\hsize{\hfill\copy2\hfill}}
\else\setbox3=\vtop{{\obeylines\hsize=\xs#4\par}}
     \setbox\TheAdd=\vtop{\hbox to\hsize{\hfill\copy0\hfill\copy1\hfill}
	        \vskip20pt\hbox to\hsize{\hfill\copy2\hfill\copy3\hfill}}
\fi\fi\fi\catcode'015=5}}\gdef\Address{\obeylines\GetAddress}

\hfuzz=0.1pt\tolerance=2000\emergencystretch=20pt\overfullrule=5pt
\begin{document}
%%%%% ------------- fill in your data below this line  -------------------
%%%%%    The following lines \Title ... \EndAddress must ALL be present
%%%%%    and in the given order.
\Title
Evidence for a Generalization of Gieseker's Conjecture on
Stratified Bundles in Positive Characteristic
%%%%%    Put here the title. Line breaks will be recognized. 
\ShortTitle 
Evidence for a Generalization of Gieseker's Conjecture
%%%%%    Running title for odd numbered pages, ONE line, please. 
%%%%%    If none is given, \Title will be used instead.          
\SubTitle   
%%%%%    A possible subtitle goes here.
\Author 
Lars Kindler
%%%%%    Put here name(s) of authors. Line breaks will be recognized.  
\ShortAuthor 
Lars Kindler
%%%%%%   Running title for even numbered pages, ONE line, please. 
%%%%%%   If none is given, \Author will be used instead.          
\EndTitle
\Abstract 
Let $X$ be a smooth, connected, projective variety over an algebraically
closed field of positive characteristic. In \cite{Gieseker/FlatBundles},
Gieseker conjectured that every stratified bundle (i.e.~every
$\mathcal{O}_X$-coherent $\mathscr{D}_{X/k}$-module) on $X$ is trivial, if and
only if $\pi_1^{\et}(X)=0$.  This was proven by Esnault-Mehta,
\cite{EsnaultMehta/Gieseker}. 

Building on the classical situation over the complex numbers,  we present and motivate a generalization of Gieseker's conjecture, using the notion of regular singular stratified bundles developed in the author's thesis and \cite{Kindler/FiniteBundles}.  In the main part of this article we establish some important special cases of this generalization; most notably we prove that for not necessarily proper $X$, $\pi_1^{\tame}(X)=0$ implies that there are no nontrivial regular singular stratified bundles with abelian monodromy.
%%%%%    Put here the abstract of your manuscript.
\EndAbstract
\MSC 
14E20, 14E22, 14F10
%%%%%    2010 Mathematics Subject Classification: 
\EndMSC
\KEY 
%%%%%    Keywords and Phrases:     
Fundamental group, coverings, stratified bundles, D-modules, tame ramification
\EndKEY
%%%%%    All 4 \Address lines below must be present. To center the last
%%%%%    entry, no empty lines must be between the following \Address
%%%%%    and \EndAddress lines.
\Address 
Lars Kindler
Freie Universit\"at Berlin
Mathematisches Institut
Arnimallee 3
14195 Berlin, Germany
%%%%%    Address of first Author here
\Address
%%%%%    Address of second Author here etc.
\Address
\Address
\EndAddress
%%
%%       Make sure the last tex command in your manuscript
%%       before the first \end{document} is the command  \Addresses
%%
%%---------------------Here the prologue ends---------------------------------
%%--------------------Here the manuscript starts------------------------------
\section{Introduction and statement of the conjecture}
Let $X$ be a smooth, separated, connected scheme of finite type over an algebraically
closed field $k$, and fix a base point $x\in X(k)$. For readability, we write
$\Pi^{\et}_X:=\pi_1^{\et}(X,x)$, and we denote by 
$\Repf_k^{\cont}\Pi_X^{\et}$ the category of continuous representations of the
profinite group
$\Pi_X^{\et}$ on finite dimensional
$k$-vector spaces equipped with the discrete topology.

If $k=\C$, then $\Pi_X^{\et}$ is the profinite completion of the abstract group
$\Pi_X^{\Top}:=\pi_1^{\Top}(X(\C),x)$, and if we write
$\Repf_{\C}\Pi_X^{\Top}$ for the category of representations of
$\Pi_X^{\Top}$ on finite dimensional $\C$-vector spaces, then $\Repf^{\cont}_{\C}\Pi^{\et}_X$ can be considered as
the full subcategory of $\Repf_{\C} \Pi^{\Top}_{X}$  having as objects
precisely those representations which factor through a finite group. Since
$\Pi_X^{\Top}$ is finitely generated, the category $\Repf_{\C}\Pi^{\Top}_{X}$  is controlled
by $\Repf_{\C}^{\cont}\Pi_X^{\et}$, according to the following theorem:

\begin{Theorem}[Grothendieck \cite{Grothendieck/Malcev}, Malcev \cite{Malcev}]\label{thm:malcev}
	If $\phi:G\rightarrow H$ is a morphism of finitely generated groups,
	then the
	following statements are equivalent:
	\begin{enumerate}
		\item\label{malcev1} The induced morphism $\widehat{G}\rightarrow \widehat{H}$ is an isomorphism, where
	$\widehat{(-)}$ denotes profinite completion.
\item\label{malcev2} The induced
	$\otimes$-functor \[\Repf_{\C} H\rightarrow \Repf_{\C} G\] is a
	$\otimes$-equivalence.
\item\label{malcev3} The induced $\otimes$-functor \[\Repf^{\cont}_{\C} \widehat{H}\rightarrow
	\Repf^{\cont}_{\C} \widehat{G}\] is
	a $\otimes$-equivalence.
	\end{enumerate}
\end{Theorem}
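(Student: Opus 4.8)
The plan is to translate the three conditions into statements about completions via Tannaka duality, dispatch the formal implications, and isolate the single hard implication where Malcev's theorem is needed. For a finitely generated group $G$, equip $\Repf_{\C} G$ with the forgetful fibre functor $\omega$ to $\Vect_{\C}$; its Tannaka dual is a pro-algebraic group $G^{\alg}$ over $\C$ (the pro-algebraic, or Hochschild--Mostow, completion), and $\omega$ induces an equivalence between $\Repf_{\C} G$ and the representations of $G^{\alg}$. The key observation is that a representation of $G$ factors through a finite quotient if and only if its image in $GL_n(\C)$ is finite, if and only if the monodromy group of the sub-$\otimes$-category it generates is a finite (constant) group scheme; the resulting full subcategory is exactly $\Repf^{\cont}_{\C}\widehat{G}$, and its Tannaka dual is the group of connected components $\pi_0(G^{\alg})$, which is canonically $\widehat{G}$. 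A morphism $\phi$ induces $\phi^{\alg}\colon G^{\alg}\to H^{\alg}$ with $\pi_0(\phi^{\alg})=\widehat{\phi}$, and by the standard Tannakian criterion a homomorphism of (pro-)algebraic groups is an isomorphism precisely when the pullback $\otimes$-functor on representations is an equivalence.

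With this dictionary, \ref{malcev2} is the assertion ``$\phi^{\alg}$ is an isomorphism'', while \ref{malcev1} and \ref{malcev3} both say ``$\pi_0(\phi^{\alg})=\widehat{\phi}$ is an isomorphism''. Hence \ref{malcev1}$\Leftrightarrow$\ref{malcev3} is Tannaka duality for profinite groups applied to the component group, and \ref{malcev2}$\Rightarrow$\ref{malcev1},\ref{malcev3} follows by applying $\pi_0$; concretely \ref{malcev2}$\Rightarrow$\ref{malcev3} also because the finite-image subcategory is characterized purely $\otimes$-categorically and is therefore preserved by any $\otimes$-equivalence. All of this is formal, and the entire content of the theorem is the converse \ref{malcev1}$\Rightarrow$\ref{malcev2}.

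For that implication I would argue full faithfulness and essential surjectivity separately. Full faithfulness reduces, via $\Hom(V,W)=(V^{\vee}\otimes W)^{\mathrm{inv}}$, to showing that for every $U\in\Repf_{\C} H$ the inclusion $U^{H}\hookrightarrow(\phi^{*}U)^{G}$ is an equality. A $G$-invariant vector is fixed by $\rho_{U}(\phi(G))$, so it suffices that this image be Zariski dense in $\overline{\rho_{U}(H)}^{\mathrm{Zar}}$. Here finite generation enters decisively: $\rho_{U}(H)\le GL_n(\C)$ is finitely generated, hence residually finite by Malcev's theorem, and surjectivity of $\widehat{\phi}$ forces $\phi(G)$ onto every finite quotient of $H$, so $\rho_{U}(\phi(G))$ surjects onto every finite quotient of $\rho_{U}(H)$. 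Essential surjectivity I would treat dually: given $\rho\colon G\to GL_n(\C)$ with image $\Gamma$, residual finiteness of $\Gamma$ realizes $\ker\rho$ as closed for the profinite topology, so under $\widehat{\phi}$ it corresponds to a closed normal subgroup of $\widehat{H}$; the quotient receives $\Gamma$, and using that $\Gamma$ sits Zariski densely in its algebraic hull one descends $\rho$ to a representation of $H$ pulling back to $\rho$.

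The step I expect to be the main obstacle is exactly the passage from ``dense for the profinite topology'' to ``Zariski dense'': one must show that a subgroup of a finitely generated linear group surjecting onto all finite quotients is already Zariski dense in the ambient algebraic group. This is where Malcev's residual finiteness is indispensable, and where the connected part of the monodromy must be controlled, since a profinitely dense subgroup can fail to be Zariski dense only through a positive-dimensional cokernel. I therefore anticipate the torus and unipotent contributions to require the finite-generation hypothesis most essentially: the torus part is governed by characters $G\to\C^{\times}$ and the unipotent part by the Malcev Lie algebra, and both factor through $G^{\ab}$, which $\widehat{\phi}$ already controls because a map of finitely generated abelian groups inducing an isomorphism on profinite completions is itself an isomorphism. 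Confirming that these two contributions exhaust the connected part, and that finite generation prevents any further positive-dimensional discrepancy, is the crux of the argument.
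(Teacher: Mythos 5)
The paper contains no proof of this theorem at all --- it is quoted as background with citations to Grothendieck and Malcev --- so your proposal can only be measured against those sources. Its skeleton is correct and standard: the equivalence of (a) and (c) is Tannaka duality for profinite groups (the Tannaka dual of $\Repf^{\cont}_{\C}\widehat{G}$ with its forgetful fiber functor is $\widehat{G}$, i.e.\ $\pi_0$ of the pro-algebraic completion), (b) implies (a) and (c) formally because finiteness of monodromy is a $\otimes$-categorical property, and the entire content is (a)$\Rightarrow$(b); your reduction of full faithfulness to the statement that $\rho_U(\phi(G))$ is Zariski dense in the Zariski closure of $\rho_U(H)$ is also the right reduction (and only surjectivity of $\widehat{\phi}$ is needed there). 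But at both places where actual work is required the proposal has genuine gaps, and the one mechanism you do commit to is wrong. You propose to control the connected part of the monodromy by its torus and unipotent pieces, asserting that these ``exhaust the connected part'' and factor through $G^{\ab}$. That is false: a finitely generated group can have Zariski-dense image in $\mathrm{SL}_2(\C)$, so the connected part can be semisimple, and such monodromy sees nothing of the abelianization; any argument routed through $G^{\ab}$ collapses exactly there. The density lemma you isolate is true, but its proof is Malcev's specialization argument, which never appears in your sketch: choose a finitely generated $\Z$-subalgebra $A\subset\C$ containing the matrix entries (and inverse determinants) of generators of $\rho_U(H)$ and the coefficients of defining equations of the Zariski closure $Z$ of $\rho_U(\phi(G))$; if some $\gamma\in\rho_U(H)$ lay outside $Z$, then $a:=f(\gamma)\neq 0$ for some defining equation $f$, and since $A$ is a Jacobson ring whose residue fields at maximal ideals are finite, there is a maximal ideal $\mathfrak{m}$ with $a\notin\mathfrak{m}$; in the finite congruence quotient $\mathrm{GL}_n(A/\mathfrak{m})$ the images of $\rho_U(\phi(G))$ and $\rho_U(H)$ coincide by profinite density, yet $f$ vanishes on the former, forcing $a\in\mathfrak{m}$, a contradiction. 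No structure theory of the connected component enters; residual finiteness and this lemma are the same specialization technique.

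The gap in essential surjectivity is more serious. From residual finiteness of $\Gamma=\rho(G)$ you correctly produce $\sigma\colon H\to\widehat{\Gamma}$ with $\sigma\circ\phi=\iota\circ\rho$, but the concluding step, ``using that $\Gamma$ sits Zariski densely in its algebraic hull one descends $\rho$ to a representation of $H$,'' is not an argument: Zariski density of $\Gamma$ in its hull holds by definition and gives no control over $\sigma(H)$. Nothing in your sketch prevents $\sigma(H)$ from being a finitely generated subgroup of $\widehat{\Gamma}$ strictly larger than $\Gamma$ (such overgroups exist in abundance), and excluding this possibility is precisely the hard half of Grothendieck's theorem --- it is where injectivity of $\widehat{\phi}$ and finite generation of $H$ must be used, neither of which your sketch invokes at this point. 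In Grothendieck's paper this is handled by working with representations over finitely generated rings and their cofinite (congruence) quotients together with a limit argument, not by Zariski density. So: correct skeleton, correct identification of the crux, but both halves of the crux implication are left open, and the one proof strategy spelled out (abelianization controls the connected part) would fail.
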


Accordingly, if $f:Y\rightarrow X$ is a morphism of smooth, connected, complex varieties, then the induced morphism $\Pi_Y^{\et}\rightarrow \Pi_X^{\et}$ (with respect
to compatible base points) is an isomorphism if and only if
$\Repf_{\C}\Pi_X^{\Top}\rightarrow \Repf_{\C}\Pi_{Y}^{\Top}$ is a $\otimes$-equivalence.
This consequence was already noted in \cite{Grothendieck/Malcev}.

To study the category $\Repf_{\C}\Pi_X^{\Top}$, we invoke 
the Riemann-Hilbert correspondence as developed in
\cite{Deligne/RegularSingular}: It states that the choice of a base point $x\in X(\C)$ gives a
$\otimes$-equivalence $|_x$ of Tannakian categories between the category of regular singular
flat connections on $X$ and the category $\Repf_{\C}\Pi_X^{\Top}$. 
Theorem \ref{thm:malcev} then translates into the following completely algebraic
statement, in which we suppress the choice of base points from the notation:

\begin{Corollary}\label{cor:giesekerchar0}
	If $f:Y\rightarrow X$ is a morphism of smooth, connected, separated, finite type
	$\C$-schemes, then the following are equivalent:
	\begin{enumerate}
		\item The morphism $\Pi^{\et}_Y\rightarrow \Pi^{\et}_X$ induced by $f$ is
			an isomorphism.
		\item Pull-back along $f$ induces an equivalence on the
			categories of 
			regular singular flat connections.
		\item Pull-back along $f$ induces an equivalence on the
			categories of 
			regular singular flat connections with \emph{finite monodromy}.
	\end{enumerate}
	
	In particular: $\Pi_Y^{\et}=0$ if and only if every regular singular flat
	connection on $Y$ is trivial.
\end{Corollary}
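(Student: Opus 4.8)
The plan is to deduce all three equivalences from Theorem \ref{thm:malcev}, applied to the homomorphism $\phi\colon \Pi_Y^{\Top}\to \Pi_X^{\Top}$ induced by $f$ on topological fundamental groups, transported through the Riemann--Hilbert correspondence. First I would record that $\Pi_X^{\Top}$ and $\Pi_Y^{\Top}$ are finitely generated: a separated finite type $\C$-scheme has the homotopy type of a finite CW-complex, so this is standard, and it is exactly the hypothesis required by Theorem \ref{thm:malcev}. Under profinite completion $\phi$ induces the map $\Pi_Y^{\et}\to \Pi_X^{\et}$ discussed in the introduction, so condition (a) of the corollary is literally statement \ref{malcev1} of the theorem.

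Next I would translate condition (b). The point is that the Riemann--Hilbert equivalence $|_x$ is functorial for pull-back: pull-back preserves regular singularity (Deligne), and for compatible base points the monodromy representation of $f^*\mathcal{E}$ is the composite of the monodromy of $\mathcal{E}$ with $\phi$. Hence the square relating $f^*$ on regular singular flat connections to the restriction functor $\phi^*\colon\Repf_{\C}\Pi_X^{\Top}\to \Repf_{\C}\Pi_Y^{\Top}$ commutes up to a $\otimes$-natural isomorphism. Granting this, $f^*$ is an equivalence of regular singular flat connections if and only if $\phi^*$ is a $\otimes$-equivalence, which is statement \ref{malcev2}; thus (a)$\Leftrightarrow$(b) is \ref{malcev1}$\Leftrightarrow$\ref{malcev2}.

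For condition (c) I would invoke the identification recalled in the introduction of $\Repf^{\cont}_{\C}\Pi_X^{\et}$ with the full subcategory of $\Repf_{\C}\Pi_X^{\Top}$ of representations factoring through a finite group, together with the fact that under $|_x$ the monodromy group of a connection is the image of the corresponding representation. Thus a regular singular flat connection has \emph{finite monodromy} precisely when its representation factors through a finite quotient, so $f^*$ restricts to the subcategories of finite-monodromy objects compatibly with $\phi^*$ restricting to $\Repf^{\cont}_{\C}\Pi_X^{\et}\to \Repf^{\cont}_{\C}\Pi_Y^{\et}$. Therefore (c) is statement \ref{malcev3}, and (a)$\Leftrightarrow$(c) is \ref{malcev1}$\Leftrightarrow$\ref{malcev3}.

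Finally, the ``in particular'' is the special case $X=\Spec\C$, where $\Pi_X^{\et}=0$ and $f^*$ sends a vector space to the trivial connection of the same rank. Since $Y$ is connected, $f^*$ is automatically fully faithful (horizontal morphisms between trivial connections are the constant ones), so $f^*$ is an equivalence exactly when it is essentially surjective, i.e.\ exactly when every regular singular flat connection on $Y$ is trivial; combined with (a)$\Leftrightarrow$(b) and $\Pi_{\Spec\C}^{\et}=0$ this yields the assertion. I expect the main obstacle to be the second paragraph: setting up the functoriality of the Riemann--Hilbert correspondence with respect to pull-back and base points carefully enough that the comparison is a genuine equivalence of $\otimes$-categories and not merely a bijection on isomorphism classes. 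Everything else is bookkeeping built on top of Theorem \ref{thm:malcev}.
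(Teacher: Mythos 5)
Your proposal is correct and follows exactly the route the paper takes: the paper presents Corollary \ref{cor:giesekerchar0} as the translation of Theorem \ref{thm:malcev} (applied to $\phi\colon\Pi_Y^{\Top}\to\Pi_X^{\Top}$, which is finitely generated) through Deligne's Riemann--Hilbert $\otimes$-equivalence, with the identification of finite-monodromy objects with $\Repf^{\cont}_{\C}\Pi^{\et}$ and the compatibility of $f^*$ with $\phi^*$ left implicit. Your write-up simply makes explicit the bookkeeping (functoriality of $|_x$ under pull-back, preservation of regular singularity, and the $X=\Spec\C$ specialization) that the paper suppresses.
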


This article is devoted to the study of analogous statements in positive
characteristics. Let
$k$ be an algebraically closed field of characteristic $p>0$, and $X$ a smooth, connected,
separated, finite type $k$-scheme. In general, neither the category of vector bundles with flat
connection, nor the category of coherent $\mathcal{O}_X$-modules with flat connection  are
Tannakian categories over $k$. Instead, we consider
left-$\mathscr{D}_{X/k}$-modules which are coherent (and then automatically locally
free) as $\mathcal{O}_X$-modules, where $\mathscr{D}_{X/k}$ is the ring of differential
operators relative to $k$, as developed in \cite[\S16]{EGA4}. 
Following
\cite{Grothendieck/Crystals} and \cite{Saavedra}, we call such objects \emph{stratified bundles}, and we write $\Strat(X)$ for the
category of stratified bundles on $X$.
Recall that in characteristic $0$, giving a stratified bundle is
 equivalent to giving a vector
bundle with flat connection. In positive characteristic, these notions are not
equivalent.  A stratified bundle is called
trivial if it is isomorphic to $\mathcal{O}_X^{\oplus n}$ with the canonical diagonal
left-$\mathscr{D}_{X/k}$-action. In \cite{Kindler/FiniteBundles} (see also
\cite{Kindler/thesis}), a notion of regular singularity for stratified bundles is
defined and studied, generalizing work of Gieseker
(\cite{Gieseker/FlatBundles}); for a summary see Section \ref{sec:rs}. We write $\Strat^{\rs}(X)$ for the full subcategory of $\Strat(X)$
with objects regular singular stratified bundles; after choosing a base point,
it is a neutral Tannakian category over $k$.

Using the theory of Tannakian categories, there
still is a procedure to attach to a stratified bundle a monodromy group and a monodromy
representation at the base point
$x\in X(k)$. The main result of \cite{Kindler/FiniteBundles} states that this
procedure induces a $\otimes$-equivalence between the category of regular singular
stratified bundles with finite monodromy and $\Repf_{k}^{\cont}\Pi_X^{\tame}$, where
$\Pi_X^{\tame}:=\pi_1^{\tame}(X,x)$ is the tame fundamental group as defined in
\cite{Kerz/tameness}. This result suggests the following conjecture,
completely analogous to
Corollary \ref{cor:giesekerchar0}:

\begin{Conjecture}\label{conj:generalGieseker}
	Let $k$ be an algebraically closed field of characteristic $p>0$.
	If $f:Y\rightarrow X$ is a morphism of smooth, connected, separated, finite type
	$k$-schemes, then the following are equivalent:
	\begin{enumerate}
		\item\label{conj1} The morphism $\Pi^{\tame}_Y\rightarrow \Pi^{\tame}_X$ induced by $f$ is
			an isomorphism.
		\item\label{conj2} Pull-back along $f$ induces an equivalence
			on the categories of regular
			singular stratified bundles.		
		\item\label{conj3} Pull-back along $f$ induces an equivalence
			on the categories of 
			regular singular stratified bundles with \emph{finite monodromy}.
	\end{enumerate}
	
	In particular: $\Pi_Y^{\tame}=0$ if and only if every regular singular stratified
	bundle on $Y$ is trivial.
\end{Conjecture}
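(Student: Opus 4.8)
The plan is to push the entire statement through Tannaka duality and to isolate the single implication that carries real content. After choosing the fiber functor $|_x$, the category $\Strat^{\rs}(X)$ is neutral Tannakian with affine group scheme $G_X:=\pi_1^{\rs}(X,x)$, and the main theorem of \cite{Kindler/FiniteBundles} identifies its finite-monodromy subcategory with $\Repf_k^{\cont}\Pi_X^{\tame}$; hence $G_X$ admits $\Pi_X^{\tame}$ as its maximal pro-(finite \'etale) quotient. A morphism $f$ induces $f^*$ on both categories, corresponding to homomorphisms $G_Y\to G_X$ and $\Pi_Y^{\tame}\to\Pi_X^{\tame}$, and by Tannakian duality (Deligne-Milne) a $\otimes$-functor of neutral Tannakian categories is an equivalence precisely when the induced morphism of group schemes is an isomorphism. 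With this dictionary, (\ref{conj2})$\Rightarrow$(\ref{conj3}) is formal, since having finite monodromy is the intrinsic property ``generates a finite \'etale Tannakian subgroup'', which any $\otimes$-equivalence preserves and reflects; and (\ref{conj3})$\Leftrightarrow$(\ref{conj1}) is the statement that a $\otimes$-equivalence of $\Repf_k^{\cont}$ of profinite groups is the same datum as an isomorphism of those groups, i.e. the profinite Tannaka/Galois dictionary applied to the main theorem (the exact analogue of item \ref{malcev3} of Theorem \ref{thm:malcev}). The whole burden therefore falls on (\ref{conj1})$\Rightarrow$(\ref{conj2}), equivalently on showing that an isomorphism $\Pi_Y^{\tame}\xrightarrow{\sim}\Pi_X^{\tame}$ of the maximal pro-\'etale quotients forces $G_Y\to G_X$ to be an isomorphism of the full pro-algebraic groups.

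This last implication is the exact positive-characteristic analogue of the finite-generation input (Malcev's theorem, Theorem \ref{thm:malcev}) behind Corollary \ref{cor:giesekerchar0}, and it is where I expect the real difficulty to lie: $G_X$ carries large unipotent and infinitesimal parts that are entirely invisible to its pro-\'etale quotient $\Pi_X^{\tame}$, so there is no formal mechanism forcing the tame quotient to control the whole group. The most elementary nontrivial instance of (\ref{conj1})$\Rightarrow$(\ref{conj2}) is the \emph{in particular} clause, obtained by taking $X=\Spec k$: there (\ref{conj1}) reads $\Pi_Y^{\tame}=0$ and (\ref{conj2}) reads $G_Y=1$, i.e. every regular singular stratified bundle on $Y$ is trivial. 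Since even this vanishing form is open in general, the realistic target is to prove it after constraining the monodromy, and abelian monodromy is the first accessible layer.

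For abelian monodromy I would argue as follows. If a regular singular stratified bundle $E$ has commutative Tannaka group $H$, then over the algebraically closed field $k$ any simple $H$-representation $V$ is one-dimensional: every $h$ acts as an $H$-endomorphism of the simple object $V$, hence as a scalar by Schur, so $V$ is a character. Consequently $E$ is a successive extension of rank-one regular singular stratified bundles, and it suffices to prove two vanishing statements under $\Pi_X^{\tame}=0$: (i) every rank-one regular singular stratified bundle is trivial, killing the diagonalizable part of $H$; and (ii) $\Ext^1_{\Strat^{\rs}(X)}(\mathcal{O}_X,\mathcal{O}_X)=0$, killing the unipotent/additive part by forbidding nontrivial iterated self-extensions. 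For (i) I would fix a good compactification $X\subset\overline{X}$ with simple normal crossings boundary and describe rank-one regular singular stratified bundles as Frobenius-compatible systems $(L_n)$ of line bundles on $\overline{X}$, with $F^{*}L_{n+1}\cong L_n$ and prescribed (tame) behaviour along the boundary; Kummer theory governs the prime-to-$p$ characters and Artin-Schreier-Witt theory the $p$-part, and regular singularity is exactly what forces the wild ($p$-power inertia) contributions to vanish, so that such bundles are controlled by $\Pi_X^{\tame,\ab}$ together with an infinitely $F$-divisible Picard-type group. The vanishing of $\Pi_X^{\tame}$ removes the former, and the latter I would eliminate using the semilinear Frobenius action against a boundedness theorem for regular singular bundles of bounded rank and ramification, in the spirit of \cite{EsnaultMehta/Gieseker}: only the ``unit-root'' fixed points survive, and these are precisely the tame/\'etale classes already counted. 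Statement (ii) is handled by the same Frobenius-semilinearity plus boundedness mechanism, the self-extension classes being confined to the pro-\'etale part that $\Pi_X^{\tame}=0$ annihilates.

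The main obstacle, to be explicit, is twofold. Beyond the abelian case, implication (\ref{conj1})$\Rightarrow$(\ref{conj2}) demands a structural ``tame finite generation'' of $G_X$ for which no Malcev analogue is currently available, which is exactly why the full statement must remain a conjecture. Already inside the abelian case, the delicate point is step (i): one must show that regular singularity genuinely decouples the tame (prime-to-$p$, \'etale) monodromy from the wild $p$-adic directions along the boundary and, simultaneously, control the infinitely $p$-divisible part of $\Pic$ coming from the interior. It is the interaction of these two $p$-phenomena, rather than the formal Tannakian bookkeeping of the first paragraph, that any argument must ultimately overcome.
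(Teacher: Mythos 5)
Your top-level analysis agrees with the paper: the statement is a conjecture, and the paper itself only records the implications \ref{conj2}$\Rightarrow$\ref{conj3}$\Leftrightarrow$\ref{conj1} — exactly your ``formal'' part, via the main theorem of \cite{Kindler/FiniteBundles} — and then proves special cases of \ref{conj1}$\Rightarrow$\ref{conj2}; so you are right that all the content lies in that last implication and that it must remain open. Your reduction of the abelian-monodromy case to (i) triviality of rank-one objects and (ii) vanishing of $\Ext^1_{\Strat^{\rs}}(\mathcal{O},\mathcal{O})$, using that irreducible representations of a commutative affine group scheme over algebraically closed $k$ are one-dimensional, is precisely the skeleton of the paper's Theorem \ref{thm:abelianQuotient} (which cites \cite[9.4]{Waterhouse} for that fact).

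The gaps are in the mechanisms you propose for (i) and (ii). For (i), the paper's Proposition \ref{prop:rank1absolute} uses no boundedness and no ``unit-root'' argument, and none is needed in rank one: $\Pic^{\Strat}(Y)$ is split by the exact sequence of Proposition \ref{prop:SESpicstrat} into the group $\mathbb{I}(Y)$ of stratifications on $\mathcal{O}_Y$ and $\plim\Pic(Y)$; the first dies because $\pi_1^{\ab,(p')}(Y)=0$ forces $H^0(Y,\mathcal{O}_Y^\times)=k^\times$ (Proposition \ref{prop:globalunits}, an open-image argument into $\pi_1(\G_{m,k})$), and the second dies because Kummer theory kills prime-to-$p$ torsion in $\Pic(Y)$ while finite generation of $\Pic(Y)$ — obtained from a normal Nagata compactification together with Lang's theorem that $\Cl^{\alg}(\overline{Y})$ is the group of $k$-points of an abelian variety \cite{Gabber/Lang} — leaves nothing else. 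Your proposed route (Frobenius-compatible systems of line bundles on a good compactification, Artin--Schreier--Witt for the $p$-part, boundedness ``in the spirit of'' \cite{EsnaultMehta/Gieseker}) is unsubstantiated at its key step: you assert, without argument, that regular singularity plus Frobenius-semilinearity confines everything to tame classes, which is exactly the point at issue; moreover a rank-one regular singular bundle extends to $\overline{X}$ only as a log-module with possibly nonzero exponents, not as a Frobenius-compatible system of line bundles on $\overline{X}$.

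For (ii), your one sentence hides the actual content of the paper's Lemma \ref{lemma:extensions}. One must first show that a regular singular self-extension $E$ of $\mathcal{O}_X$ has all exponents $0$ in $\Z_p/\Z$: restricting to the formal fraction field at a boundary point, \cite[Thm.~3.3]{Gieseker/FlatBundles} says regular singularity makes the formal local monodromy diagonalizable, while globally the monodromy of $E$ is a subgroup of $\G_{a,k}$, which has no nontrivial diagonalizable subgroups; hence the formal restriction is trivial and the exponents vanish. Only then does Proposition \ref{prop:nonilpotent} extend $E$ to a genuine stratified bundle on the proper $\overline{X}$, where \cite[Prop.~2.4]{EsnaultMehta/Gieseker} applies — and this is where the hypothesis that $\pi_1^{\tame}$ has no $\Z/p\Z$-quotient and the existence of a good compactification are genuinely used. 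Both hypotheses are missing from your sketch, and without the exponent computation there is no way to reach the proper situation where the Esnault--Mehta machinery (the only boundedness-type input in the whole argument) can be invoked.
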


For a slightly different exposition of this conjecture, see
\cite{Esnault/ECM}.
Note that by the main result of \cite{Kindler/FiniteBundles} we always have
\ref{conj2}$\Rightarrow$\ref{conj3}$\Leftrightarrow$\ref{conj1}.

The main part of this article is concerned with establishing special cases of and evidence
for the validity of Conjecture \ref{conj:generalGieseker}, i.e.~for the direction
\ref{conj1}$\Rightarrow$\ref{conj2}. We give a brief summary: 	Gieseker's original conjecture from \cite{Gieseker/FlatBundles} corresponds to
		Conjecture \ref{conj:generalGieseker} for a projective
		morphism $f:Y\rightarrow \Spec (k)$. His conjecture
		was proven by Esnault-Mehta, \cite{EsnaultMehta/Gieseker}. Thus Conjecture
		\ref{conj:generalGieseker} generalizes Gieseker's conjecture in two
		``directions'': It allows non-projective varieties by using the
		notion of regular singularity, and it gives a relative
		formulation.

		The main result of this article, proven in Section
		\ref{sec:extensions}, is the following:
		\begin{Theorem}[see Theorem
				\ref{thm:abelianQuotient}]\label{thm:abelianQuotientINTRO}
			In the situation of Conjecture
			\ref{conj:generalGieseker}, assume that $X=\Spec(k)$,
			and that $Y$ admits a good compactification. If
			$\Pi_Y^{\ab,(p')}=0$ and if $\Pi_Y^{\tame}$
			does not have a quotient isomorphic to $\Z/p\Z$, then
			every regular singular stratified bundle with abelian
			monodromy is trivial.

			In other words: The abelianization of the
			pro-algebraic group associated with
			$\Strat^{\rs}(Y)$ (and the choice of a base point) is trivial.
		\end{Theorem}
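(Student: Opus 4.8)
The plan is to translate the statement into Tannakian language and reduce it to two vanishing statements. Write $G:=\pi(\Strat^{\rs}(Y),x)$ for the pro-algebraic $k$-group attached to the neutral Tannakian category $\Strat^{\rs}(Y)$. An object has abelian monodromy exactly when the associated representation of $G$ factors through the abelianization $G^{\ab}$, so the full subcategory of abelian-monodromy objects is $\otimes$-equivalent to the category of finite-dimensional representations of $G^{\ab}$, and the assertion is precisely that $G^{\ab}=1$. Since $G^{\ab}$ is a commutative affine (pro-)group scheme over the perfect field $k$, it is the product of a part of multiplicative type and a unipotent part; the former is detected by $X^{*}(G^{\ab})=\Hom(G^{\ab},\G_m)$, i.e.\ by rank-$1$ objects, and a nonzero unipotent (pro-)group always admits a nonzero homomorphism to $\G_a$ (through a $\Z/p\Z$-, $\alpha_p$- or $\G_a$-quotient), which corresponds to a nonsplit self-extension of the unit object. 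Thus it suffices to prove (b) that every rank-$1$ regular singular stratified bundle on $Y$ is trivial, so that $X^{*}(G^{\ab})=0$ and $G^{\ab}$ becomes unipotent, and (c) that $\Ext^1_{\Strat^{\rs}(Y)}(\mathcal O_Y,\mathcal O_Y)=0$, so that $\Hom(G^{\ab},\G_a)=0$ and the unipotent group $G^{\ab}$ vanishes. Equivalently, since the commutative monodromy can be simultaneously triangulated, every abelian-monodromy object is an iterated extension of rank-$1$ objects, and (b) together with (c) forces all of them to be trivial.

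The hypotheses enter through the finite-monodromy case. By the main result of \cite{Kindler/FiniteBundles}, the regular singular stratified bundles with finite \'etale monodromy form a category $\otimes$-equivalent to $\Repf^{\cont}_k\Pi_Y^{\tame}$, and the abelian objects among them correspond to $\Repf^{\cont}_k\Pi_Y^{\ab}$. I would first observe that the two hypotheses combine to give $\Pi_Y^{\ab}=0$: the prime-to-$p$ part vanishes by assumption, while a nontrivial abelian pro-$p$ part would surject onto $\Z/p\Z$, contradicting the assumption that $\Pi_Y^{\tame}$ has no such quotient. Consequently there are no nontrivial finite-\'etale-monodromy abelian objects at all; in particular no rank-$1$ object of order prime to $p$ or with \'etale $p$-monodromy, and no Artin-Schreier ($\Z/p\Z$-monodromy) self-extension of $\mathcal O_Y$.

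It then remains to rule out the genuinely non-finite and non-\'etale monodromy: for (b), rank-$1$ objects generating a torus (an infinite-order character) or an infinitesimal $\mu_p$; for (c), self-extensions with $\G_a$- or $\alpha_p$-monodromy. Here I would exploit the good compactification $\bar Y\supset Y$, with $D=\bar Y\setminus Y$ a normal crossings divisor, together with the Frobenius-descent tower underlying every stratified bundle, $E\cong F^{*}E_1\cong F^{2*}E_2\cong\cdots$. Following the strategy of Esnault-Mehta \cite{EsnaultMehta/Gieseker}, the point is to bound this tower: using the logarithmic structure along $D$ supplied by regular singularity, together with boundedness of semistable sheaves with numerically trivial Chern classes (Langer), the sheaves $E_n$ on $\bar Y$ should lie in a bounded family, forcing the tower to become periodic. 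A Frobenius-periodic bundle has finite \'etale monodromy, so periodicity both trivializes the underlying line bundle up to torsion in the rank-$1$ case and confines any extension class to a finite-dimensional Frobenius-stable space realizing a finite \'etale (Artin-Schreier-Witt) extension; in either case the object acquires finite \'etale monodromy, and the previous paragraph applies to conclude triviality.

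The hard part is precisely this last reduction, i.e.\ controlling the non-\'etale monodromy. The hypotheses are purely about $\Pi_Y^{\tame}$ and therefore only see finite \'etale quotients, whereas the potential $\mu_p$-, $\alpha_p$- and $\G_a$-monodromy genuinely occurs for stratified bundles, because the Tannaka group of $\Strat^{\rs}(Y)$ need not be reduced; such monodromy is invisible to $\pi_1^{\tame}$ and must be eliminated by geometry. The main technical burden is to make the boundedness and Frobenius-periodicity arguments compatible with the regular-singular structure, so that one genuinely works on the proper variety $\bar Y$ with logarithmic poles along $D$ rather than on the open $Y$; this is where the good-compactification hypothesis and the regular-singularity theory of \cite{Kindler/FiniteBundles} and \cite{Kindler/thesis} are indispensable.
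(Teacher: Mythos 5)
Your opening Tannakian reduction is fine and is essentially the paper's own: the paper also reduces the theorem to the two statements you call (b) and (c), namely that every rank-$1$ object of $\Strat(Y)$ is trivial and that $\Ext^1_{\Strat^{\rs}(Y)}(\mathcal{O}_Y,\mathcal{O}_Y)=0$, using \cite[9.4]{Waterhouse} (irreducible objects of an abelian-monodromy category have rank $1$) plus d\'evissage, which is equivalent to your multiplicative/unipotent splitting of $G^{\ab}$. The genuine gap is that you never prove (b) and (c). You replace them by a sketched boundedness/Frobenius-periodicity argument ``following Esnault--Mehta'' (Langer boundedness of semistable sheaves, periodicity of the tower $(E_n)$, reduction to finite \'etale monodromy), to be made ``compatible with the logarithmic structure along $D$'' --- and you yourself flag this as the unproven hard part. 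As stated this route is incomplete and does not obviously work: the relations $L_n\cong L_{n+1}^{\otimes p}$ hold only on $Y$, the extensions of the $E_n$ to $\overline{Y}$ are not canonical, no log-compatible boundedness or periodicity statement is supplied, and periodicity arguments cannot by themselves rule out an infinite-order character or a $\G_a$-extension (these are precisely the non-finite monodromies one must kill, not reduce to the finite case). A further factual error: your claim that ``the Tannaka group of $\Strat^{\rs}(Y)$ need not be reduced,'' invoked to justify worrying about $\mu_p$- and $\alpha_p$-monodromy, contradicts dos Santos' theorem \cite{DosSantos}, cited in the paper, that monodromy groups of stratified bundles are always smooth; only tori and $\G_a$ genuinely need to be excluded.

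For comparison, the paper's proofs of (b) and (c) are direct and use none of the moduli-theoretic machinery. For (b), Proposition \ref{prop:rank1absolute} needs no good compactification: by Proposition \ref{prop:SESpicstrat}, $\Pic^{\Strat}(Y)$ is an extension of $\plim\Pic(Y)$ by $\mathbb{I}(Y)$; the hypothesis $\pi_1^{\ab,(p')}(Y)=0$ forces $H^0(Y,\mathcal{O}_Y^{\times})=k^{\times}$ (Proposition \ref{prop:globalunits}), hence $\mathbb{I}(Y)=0$, while Kummer theory kills the prime-to-$p$ torsion of $\Pic(Y)$ and Lang's theorem (that $\Cl^{\alg}$ of a normal projective compactification is an abelian variety) gives finite generation of $\Pic(Y)$, hence $\plim\Pic(Y)=0$. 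For (c), Lemma \ref{lemma:extensions} is where the good compactification enters: given a regular singular extension $E$ of $\mathcal{O}_Y$ by $\mathcal{O}_Y$, the formal local monodromy at the generic point of each boundary component is simultaneously a subgroup of $\G_{a,k}$ (since $E$ is an extension of trivial objects) and of $\G_{m,k}^2$ (by Gieseker's local decomposition theorem \cite[Thm.~3.3]{Gieseker/FlatBundles}), hence trivial; therefore all exponents vanish, $E$ extends to a stratified bundle on $\overline{Y}$ by Proposition \ref{prop:nonilpotent}, and then \cite[Prop.~2.4]{EsnaultMehta/Gieseker} --- an elementary $p$-linear-algebra statement about the finite-dimensional $H^1(\overline{Y},\mathcal{O}_{\overline{Y}})$ under the no-$\Z/p\Z$-quotient hypothesis, not the boundedness machinery of their main theorem --- shows the extension is trivial. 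These two arguments are exactly what your proposal is missing.
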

Here a \emph{good compactification} of $Y$ is a smooth, proper $k$-scheme
$\overline{Y}$ containing $Y$ as a dense open subscheme, such that
$\overline{Y}\setminus Y$ is the support of a strict normal crossings divisor,
and $(-)^{\ab,(p')}$ denotes the maximal abelian pro-prime-to-$p$-quotient.
This quotient is independent of the choice of the base point.

Theorem \ref{thm:abelianQuotientINTRO} is a consequence of the fact that
under the given assumptions there are no nontrivial rank $1$ stratified bundles,
and no nontrivial regular singular extensions of two rank $1$ stratified
bundles. Hence we establish these facts first:
in Section \ref{sec:linebundles} we study stratified bundles of rank $1$, and
we prove a relative version of Conjecture \ref{conj:generalGieseker} for
stratified line bundles: 
\begin{Theorem}[see Theorem \ref{thm:rank1}]\label{thm:rank1INTRO}
	Let $X$ and $Y$ be smooth, connected, separated, finite type $k$-schemes, which
	admit good compactifications $\overline{X}$ and $\overline{Y}$ over $k$. 
	If $f:Y\rightarrow X$ is a map extending to a
	morphism $\bar{f}:\overline{Y}\rightarrow \overline{X}$ such that $f$
	induces an isomorphism
	$\Pi_Y^{\ab,(p')}\xrightarrow{\cong}\Pi_X^{\ab,(p')}$ and such
	that the cokernel of the induced map
	$H^0(X,\mathcal{O}_X^{\times})\rightarrow H^0(Y,\mathcal{O}_Y^\times)$
	is a $p$-group,
	then pull-back along $f$ induces an isomorphism
	\[\Pic^{\Strat}(X)\xrightarrow{\cong} \Pic^{\Strat}(Y)\]
	where $\Pic^{\Strat}$ denotes the group of isomorphism classes of
	stratified line bundles.
\end{Theorem}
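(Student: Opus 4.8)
The plan is to make stratified line bundles completely explicit through Frobenius descent and then to compare $X$ and $Y$ by a five-lemma argument. Recall that on a smooth $k$-scheme $\Strat(X)$ is equivalent to the category of $F$-divided sheaves, so a stratified line bundle is a system $(L_n,\sigma_n)_{n\ge 0}$ of line bundles $L_n$ on $X$ together with isomorphisms $\sigma_n\colon L_n\xrightarrow{\sim}F^{*}L_{n+1}$, where $F$ is the relative Frobenius. Since $F^{*}$ induces multiplication by $p$ on $\Pic(X)$, the classes satisfy $[L_n]=p\,[L_{n+1}]$, and forgetting the $\sigma_n$ defines a homomorphism $\Pic^{\Strat}(X)\to\varprojlim\nolimits_p\Pic(X)$ (transition maps $\times p$). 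I would first establish the short exact sequence
\[ 0\to \varprojlim\nolimits^1_p H^0(X,\mathcal{O}_X^{\times})\to\Pic^{\Strat}(X)\to\varprojlim\nolimits_p\Pic(X)\to 0 , \]
where surjectivity on the right is clear by choosing representatives and the kernel consists of stratifications of the trivial bundle: these are sequences $(u_n)$ of units modulo the equivalence $(u_n)\sim(u_n\,u_{n+1}^{p}\,\cdot)$ coming from $F^{*}\phi=\phi^{p}$, whose cokernel is exactly $\varprojlim\nolimits^1_p H^0(X,\mathcal{O}_X^{\times})$. Pulling back along $f$ yields a morphism of such sequences, so by the five lemma it suffices to treat the two outer terms.

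The conceptual point reconciling these $p$-adic-looking limits with the prime-to-$p$ hypotheses is that \emph{both} outer terms are $\Z[1/p]$-modules, i.e.\ multiplication by $p$ is invertible on them. For the unit term, $\overline{X}$ proper and connected gives $H^0(X,\mathcal{O}_X^{\times})\cong k^{\times}\times\Z^{r}$ with $\Z^{r}$ generated by units supported on the boundary; since $k^{\times}$ is $p$-divisible with no $p$-torsion it dies, and $\varprojlim\nolimits^1_p H^0(X,\mathcal{O}_X^{\times})\cong(\varprojlim\nolimits^1_p\Z)^{r}\cong(\Z_p/\Z)^{r}$, on which $\times p$ is an isomorphism. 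Hypothesis (2) says the cokernel of $\Z^{r_X}\to\Z^{r_Y}$ is a finite $p$-group, hence this map is injective with image of $p$-power index; as $p$ acts invertibly on $\Z_p/\Z$ it induces an isomorphism on $\varprojlim\nolimits^1_p$. This disposes of the unit term.

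The heart of the matter is the Picard term. Using the localization sequence $\bigoplus_i\Z[D_i]\to\Pic(\overline{X})\to\Pic(X)\to 0$ for the boundary $\overline{X}\setminus X=\bigcup_i D_i$, together with the facts that $\varprojlim\nolimits_p$ annihilates finitely generated groups and $\varprojlim\nolimits_p\NS=0$, I would reduce to the proper case and identify $\varprojlim\nolimits_p\Pic(\overline{X})=\varprojlim\nolimits_p\Pic^{0}(\overline{X})=\varprojlim\nolimits_p A_X(k)$ with $A_X=\Pic^{0}_{\overline{X}/k}$; the boundary contributes only a $(\Z_p/\Z)$-module, handled exactly as the unit term. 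The crucial observation is that $\varprojlim\nolimits_p A_X(k)$ is a $\Q$-vector space: since $A_X(k)$ is divisible with finite $m$-torsion for every $m$, multiplication by any nonzero integer becomes an isomorphism after applying $\varprojlim\nolimits_p$. Hence $\varprojlim\nolimits_p$ turns isogenies into isomorphisms (an isogeny $\phi$ has a quasi-inverse $\psi$ with $\psi\phi=[\deg\phi]$, which is invertible on a $\Q$-vector space). It therefore suffices to show that hypothesis (1) forces the Picard map $\overline{f}^{*}\colon A_X\to A_Y$ to be an isogeny: an isomorphism on $\Pi^{\ab,(p')}$ translates, via Kummer theory, into an isomorphism on $\prod_{\ell\ne p}T_\ell$ of the relevant Picard (semiabelian Albanese) varieties, and a homomorphism of abelian varieties that is an isomorphism on even a single prime-to-$p$ Tate module is an isogeny.

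The main obstacle I anticipate is precisely this last translation. One must carefully match $\Pi^{\ab,(p')}$ of the \emph{open} schemes with the prime-to-$p$ Tate module of a Picard/Albanese variety, keeping track of the boundary contribution and of the difference between the tame fundamental group of $X$ and the étale one of $\overline{X}$, and must verify that every $(\Z_p/\Z)$-correction term arising from the boundary and from $\NS$ is controlled by hypotheses (1) and (2). Once $\overline{f}^{*}$ is known to be an isogeny on Picard varieties, the $\Q$-vector-space observation makes the comparison of the divisible parts formal, the $\Z[1/p]$-module structure absorbs the remaining $p$-power indeterminacy, and the five lemma yields the asserted isomorphism $\Pic^{\Strat}(X)\xrightarrow{\sim}\Pic^{\Strat}(Y)$.
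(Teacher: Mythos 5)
Your skeleton is in fact the paper's own: your short exact sequence is Proposition \ref{prop:SESpicstrat}, your identification of its kernel with $R^1\plim H^0(X,\mathcal{O}_X^\times)$ agrees with the description of $\mathbb{I}(X)$ in Proposition \ref{prop:descriptionofI}, and the computation $(\Z_p/\Z)^r$ for the unit term is the paper's as well. Already there, though, one step is mis-sourced: injectivity of $\Z^{r_X}\to\Z^{r_Y}$ does \emph{not} follow from the cokernel being a finite $p$-group (a map of lattices with finite cokernel can have a kernel, e.g.\ $\Z^2\to\Z$, $(a,b)\mapsto a$); it follows from hypothesis (1), as in Remark \ref{rem:globalunits} \ref{rem:injectivity}. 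Without injectivity $R^1\plim$ of the units map acquires a kernel $(\Z_p/\Z)^{\rank\ker}$, so this is not cosmetic, though it is fixable.

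The genuine gap is your ``crucial observation'' for the Picard term: $\plim A_X(k)$ is \emph{not} a $\Q$-vector space, because $\plim$ does not annihilate finite groups. For any finitely generated (in particular finite) abelian group $G$ one has $\plim G=G[p']$, the prime-to-$p$ torsion (Lemma \ref{lemma:R1}); only the $p$-primary part dies. Concretely, for an abelian variety $A$ of dimension $g>0$ and a prime $\ell\neq p$, the kernel of multiplication by $\ell$ on $\plim A(k)$ consists of the sequences $(x_n)$ with $x_n\in A[\ell](k)$ and $x_n=px_{n+1}$; since $p$ is invertible on $A[\ell](k)\cong(\Z/\ell)^{2g}$, this kernel is $(\Z/\ell)^{2g}\neq 0$. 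So isogenies do \emph{not} become isomorphisms after $\plim$ --- the isogeny $[\ell]\colon A\to A$ is already a counterexample --- and your reduction ``iso on $T_\ell$ $\Rightarrow$ isogeny $\Rightarrow$ iso after $\plim$'' discards exactly what is needed: you must keep the Tate-module isomorphism at \emph{every} $\ell\neq p$ in order to conclude that the kernel of the isogeny contains no prime-to-$p$ torsion, and only then does $\plim$ yield an isomorphism. That requirement is precisely condition \ref{prop2} of Lemma \ref{lemma:groupTheory} in the paper, and verifying it is where the actual work sits: the Kummer-theoretic diagram \eqref{diag:pic0}, in which \emph{both} hypotheses enter (the unit groups of the open varieties occur in the Kummer sequence, so your plan of using hypothesis (2) only for the unit term cannot work), together with the Gysin-sequence bookkeeping in Proposition \ref{prop:projlim}. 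For the same reason your auxiliary claims that ``$\plim$ annihilates finitely generated groups'' and ``$\plim\NS=0$'' are false whenever prime-to-$p$ torsion is present in $\NS(\overline{X})$ or in the boundary-divisor kernel, so the asserted reduction to the proper case and to $\Pic^0$ also breaks down as stated.
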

Note that contrary to the situation over the complex
numbers, a stratified line bundle in our context is always regular singular 
(Proposition \ref{prop:rsfacts}).
The assumption on the cokernel of
$H^0(X,\mathcal{O}_X^{\times})\rightarrow H^0(Y,\mathcal{O}_{Y}^{\times})$ is
trivially fulfilled if $Y$ and $X$ are proper. For further comments see
Remark \ref{rem:globalunits}. 

In particular we obtain:
\begin{Corollary}
	Let $Y,X$ be proper, smooth, connected $k$-schemes.
	If $f:Y\rightarrow X$ is a morphism such that $f$ induces an isomorphism 
	$\Pi^{\ab,(p')}_Y\xrightarrow{\cong}\Pi_X^{\ab, (p')}$, then $f$ induces an isomorphism
	$\Pic^{\Strat}(X)\xrightarrow{\cong} \Pic^{\Strat}(Y)$.
\end{Corollary}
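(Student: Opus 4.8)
The plan is to obtain this statement as an immediate application of Theorem~\ref{thm:rank1INTRO}. The first step is to observe that a proper, smooth, connected $k$-scheme is its own good compactification: properness supplies separatedness and finite type, and taking $\overline{X}=X$ (resp.\ $\overline{Y}=Y$) the boundary $\overline{X}\setminus X$ is empty, hence trivially the support of a strict normal crossings divisor. With these choices the morphism $f$ extends to $\overline{f}=f$, so all the structural hypotheses of Theorem~\ref{thm:rank1INTRO} are satisfied.

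It then remains to check the two numerical hypotheses of that theorem. The isomorphism $\Pi_Y^{\ab,(p')}\xrightarrow{\cong}\Pi_X^{\ab,(p')}$ is assumed outright. For the condition on global units, I would argue as follows: since $X$ is smooth and connected over the algebraically closed field $k$ it is integral, and since it is proper one has $H^0(X,\mathcal{O}_X)=k$, whence $H^0(X,\mathcal{O}_X^{\times})=k^{\times}$; the identical reasoning applies to $Y$. The induced map $H^0(X,\mathcal{O}_X^{\times})\to H^0(Y,\mathcal{O}_Y^{\times})$ is the restriction to units of the $k$-algebra homomorphism $k\to k$ attached to the $k$-morphism $f$, which is necessarily the identity. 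Its cokernel is therefore trivial, and in particular a $p$-group, as required.

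Granting these verifications, Theorem~\ref{thm:rank1INTRO} applies verbatim and produces the desired isomorphism $\Pic^{\Strat}(X)\xrightarrow{\cong}\Pic^{\Strat}(Y)$. I expect no genuine obstacle: the entire mathematical content resides in Theorem~\ref{thm:rank1INTRO}, and this corollary is precisely the case in which the auxiliary hypothesis on the cokernel of the map on global units---flagged in the remark following the theorem as automatic when $X$ and $Y$ are proper---costs nothing. The only substantive, if routine, point is the identification $H^0(X,\mathcal{O}_X^{\times})=k^{\times}$ coming from properness and connectedness, which I have isolated above.
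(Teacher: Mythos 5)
Your proposal is correct and follows exactly the route the paper intends: the corollary is stated as an immediate consequence of Theorem \ref{thm:rank1INTRO}, with the paper's own remark (Remark \ref{rem:globalunits}) noting that the unit-group condition is automatic for proper $X,Y$ precisely via $H^0(X,\mathcal{O}_X^{\times})=k^{\times}$, which is the point you verify. Nothing is missing; your identification of $X$ and $Y$ as their own good compactifications (with empty, hence strict normal crossings, boundary) and of the induced map on units as the identity on $k^{\times}$ is the whole argument.
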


In the case that $X=\Spec k$, the assumption on the existence of a good
compactification of $Y$ is not necessary:
\begin{Proposition}[see Proposition
		\ref{prop:rank1absolute}]\label{prop:rank1absoluteINTRO}
	Let $Y$ be a smooth, connected, separated, finite type $k$-scheme,
	such that $\Pi_Y^{\ab,(p')}=0$. Then $\Pic^{\Strat}(Y)=0$.
\end{Proposition}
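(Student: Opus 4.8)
The plan is to translate the statement into the language of the Tannaka group and then isolate precisely the contribution that the hypothesis $\Pi_Y^{\ab,(p')}=0$ controls. Since every stratified line bundle is regular singular (Proposition \ref{prop:rsfacts}), the group $\Pic^{\Strat}(Y)$ is exactly the group of isomorphism classes of rank $1$ objects of $\Strat^{\rs}(Y)$. After fixing a base point, Tannaka duality identifies $\Strat^{\rs}(Y)$ with $\Repf_k(\pi)$ for a pro-algebraic $k$-group $\pi$, under which rank $1$ objects correspond to characters $\pi\rightarrow\G_m$ and tensor product to multiplication of characters. Thus $\Pic^{\Strat}(Y)\cong\Hom(\pi,\G_m)=\Hom(\pi^{\ab},\G_m)$, and the entire task becomes showing that this character group vanishes. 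I would fix a character $\chi$, let $H\subseteq\G_m$ be the schematic closure of its image (the monodromy group of the associated line bundle), and recall that the only subgroup schemes of $\G_m$ over $k$ are $\G_m$ itself and the groups $\mu_n$; writing $n=p^a m$ with $(m,p)=1$ gives the splitting $\mu_n\cong\mu_{p^a}\times\mu_m$ into an infinitesimal and an étale factor.

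First I would dispose of the étale prime-to-$p$ part, which arises only when $H=\mu_n$. Composing $\chi$ with the projection $\mu_n\rightarrow\mu_m$ produces a character with finite étale image of order prime to $p$, and by the main result of \cite{Kindler/FiniteBundles} such a character is the same datum as a continuous homomorphism $\Pi_Y^{\tame}\rightarrow\mu_m\cong\Z/m\Z$, hence an element of $\Hom(\Pi_Y^{\ab,(p')},\Z/m\Z)$, which is $0$ by hypothesis. Consequently $\chi$ has trivial prime-to-$p$ étale part, so that either $H=\mu_{p^a}$ with $a\geq1$ (infinitesimal), or $H=\G_m$; note that in the latter case $\G_m$ has no nontrivial finite étale quotient at all, so the hypothesis on $\Pi_Y^{\ab,(p')}$ gives no direct leverage.

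The remaining case, a nontrivial character with infinitesimal or connected image, is the genuinely hard one, and here I would pass to the Frobenius-descent description: a stratified line bundle is a compatible tower $(L_i)_{i\geq0}$ of ordinary line bundles with $F^*L_{i+1}\cong L_i$, so that $[L_0]=p[L_1]=p^2[L_2]=\cdots$ is an infinitely $p$-divisible class in $\Pic(Y)$, and a character with image $\mu_{p^a}$ or $\G_m$ corresponds to such a tower being nontrivial. The key step is to argue that the existence of a nontrivial infinitely $p$-divisible compatible tower forces the Picard variety of $Y$ to be positive-dimensional, and then to convert this into nonvanishing of a prime-to-$p$ Tate module via the Kummer sequence, contradicting $\Pi_Y^{\ab,(p')}=0$. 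I expect this comparison between the $p$-adic and infinitesimal data coming from the Frobenius tower and the prime-to-$p$ abelian fundamental group to be the main obstacle, the more so because $Y$ is not assumed to admit a good compactification, so one cannot simply quote the relative statement of Theorem \ref{thm:rank1INTRO}; instead one must either run the Picard-divisibility argument intrinsically on $Y$, or reduce to a dense compactifiable open while keeping track of the induced map on $\Pi^{\ab,(p')}$.

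Once the connected and $p$-power parts are excluded, every character $\chi\colon\pi\rightarrow\G_m$ is trivial, whence $\Hom(\pi,\G_m)=0$ and therefore $\Pic^{\Strat}(Y)=0$, as claimed.
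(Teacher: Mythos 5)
Your Tannakian setup is fine as far as it goes, and your disposal of the finite \'etale prime-to-$p$ case via the main theorem of \cite{Kindler/FiniteBundles} is correct (in fact the infinitesimal case $H=\mu_{p^a}$, $a\geq 1$, cannot occur at all: by dos Santos's theorem, cited in the paper, the monodromy group of a stratified bundle is smooth). The genuine gap is in your reduction of the remaining case to ``a nontrivial infinitely $p$-divisible class in $\Pic(Y)$''. A nontrivial character with image $\G_m$ need \emph{not} produce a nontrivial tower of Picard classes: all the bundles $L_i$ in the tower can be isomorphic to $\mathcal{O}_Y$ while the transition isomorphisms encode a nontrivial stratification. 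These classes form exactly the subgroup $\mathbb{I}(Y)\subset\Pic^{\Strat}(Y)$ of Proposition \ref{prop:SESpicstrat}, which by Proposition \ref{prop:descriptionofI} is the cokernel of the map from $H^0(Y,\mathcal{O}_Y^\times)/k^\times$ into its $p$-adic completion; it is nonzero whenever $Y$ carries a nonconstant unit (e.g.\ $\mathbb{I}(\G_m)\cong\Z_p/\Z$, and a class of infinite order there has monodromy group $\G_m$). Your ``key step'' (positive-dimensionality of a Picard variety) says nothing about such classes, so even if it were carried out your argument would not be complete. The paper kills this part by a separate mechanism: Proposition \ref{prop:globalunits} shows that $\pi_1^{\ab,(p')}(Y)=0$ forces $H^0(Y,\mathcal{O}_Y^\times)=k^\times$ (a nonconstant unit gives a dominant map $Y\rightarrow\G_m$, hence an open map on $\pi_1$, contradicting $\pi_1(\G_m)^{(\ell)}\cong\Z_\ell$), whence $\mathbb{I}(Y)=0$ by Corollary \ref{cor:pdivisible}.

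Moreover, the step you yourself flag as the main obstacle---ruling out a nontrivial class in $\plim\Pic(Y)$---is only announced, not proved, so the proposal is a plan rather than a proof precisely at its hardest point. The paper's execution of it (Proposition \ref{prop:rank1absolute}) runs as follows: Kummer theory gives $\Pic(Y)[n]=0$ for all $n$ prime to $p$; by Lemma \ref{lemma:R1} it then suffices to show that $\Pic(Y)$ is \emph{finitely generated}, since for finitely generated $G$ one has $\plim G\cong G[p']$. Finite generation is the real content and is where your intuition about a Picard variety must be made precise without any smooth compactification available: one takes a Nagata \emph{normal projective} compactification $\overline{Y}$, notes that $\Cl(\overline{Y})\twoheadrightarrow\Pic(Y)$ has finitely generated kernel, and invokes Lang's theorem (the input pointed out by Conrad) that $\Cl^{\alg}(\overline{Y})$ is the group of $k$-points of an abelian variety. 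Since $\Pic(Y)[n]=0$ for $n$ prime to $p$ and the kernel is finitely generated, $\Cl^{\alg}(\overline{Y})[n]=0$ for all but finitely many $n$, forcing that abelian variety to be trivial; hence $\Cl(\overline{Y})=\NS(\overline{Y})$ is finitely generated, and so is $\Pic(Y)$. Without this (or an equivalent) argument, your claim that a nontrivial tower ``forces the Picard variety of $Y$ to be positive-dimensional, contradicting $\Pi_Y^{\ab,(p')}=0$'' remains an unsupported assertion rather than a step in a proof.
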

%We remark that the proof of Proposition \ref{prop:rank1absoluteINTRO} is
%\emph{much}
%simpler than the proof of the relative version Theorem
%\ref{thm:rank1INTRO}.\\

In Section \ref{sec:extensions} we study regular singular extensions of rank
$1$ stratified bundles, and prove Theorem \ref{thm:abelianQuotientINTRO}.

\vspace{1em}
For regular singular stratified bundles with not necessarily abelian monodromy, we establish the following two special cases of
Conjecture \ref{conj:generalGieseker}:

It is well-known that $\pi_1^{\tame}(\A^n_k)=0$ and in Section
\ref{sec:affinespaces} we give a short proof of:
\begin{Theorem}[see Theorem \ref{thm:affinespaces}]Every
	regular singular stratified bundle on $\A^n_k$ is trivial.
\end{Theorem}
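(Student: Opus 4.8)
The plan is to show that the neutral Tannakian category $\Strat^{\rs}(\A^n_k)$ is $\otimes$-equivalent to $\Vect_k$, which is the same as saying that every object is trivial. As a first reduction, building on the previous sections, I would cut the problem down to \emph{simple} objects. Indeed, the triviality $\pi_1^{\tame}(\A^n)=0$ together with the main theorem of \cite{Kindler/FiniteBundles} kills every object with finite monodromy; the rank-one statement $\Pic^{\Strat}(\A^n)=0$ (Proposition \ref{prop:rank1absoluteINTRO}) shows that all one-dimensional objects are trivial; and, as in the proof of Theorem \ref{thm:abelianQuotientINTRO}, the absence of nontrivial regular singular extensions of trivial bundles forces every object with unipotent monodromy to be trivial. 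Hence it remains to rule out a nontrivial simple object of rank $\ge 2$.

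To handle the remaining simple objects I would exploit the homogeneity of affine space, using the scaling action to contract $\A^n$ onto its cone point $0$. Fix the base point $x=0$, let $E$ be a regular singular stratified bundle of rank $r$ on $\A^n$, and consider the scaling morphism $m\colon \A^1_t\times\A^n\to\A^n$, $(t,v)\mapsto tv$. First I would verify that $M:=m^*E$ is again regular singular: the map $m$ extends compatibly with the boundary divisors of the compactifications $\P^1\times\P^n$ and $\P^n$, and pull-back of regular singular stratified bundles along such morphisms preserves regular singularity (Proposition \ref{prop:rsfacts} and the functoriality of the notion). The restriction $M|_{t=1}$ is $E$, whereas $M|_{t=0}$ is the pull-back of the fibre $E|_0$, hence the trivial bundle $\mathcal{O}_{\A^n}^{\oplus r}$.

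The decisive input is then a rigidity statement: for a regular singular stratified bundle on $Z\times\A^1$ the restrictions to the slices $Z\times\{0\}$ and $Z\times\{1\}$ are isomorphic. This is an $\A^1$-homotopy invariance for $\Strat^{\rs}$, i.e.\ pull-back along the projection $Z\times\A^1\to Z$ should induce an equivalence of categories. Granting this for $Z=\A^n$, the contraction gives $E\cong M|_{t=1}\cong M|_{t=0}\cong\mathcal{O}_{\A^n}^{\oplus r}$, so $E$ is trivial. Equivalently, one may dispense with the scaling map and prove homotopy invariance for products with $\A^1$ directly, then trivialize $\Strat^{\rs}(\A^n)=\Strat^{\rs}(\A^{n-1}\times\A^1)\simeq\cdots\simeq\Strat^{\rs}(\Spec k)=\Vect_k$ by induction on $n$.

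The main obstacle is precisely this rigidity, and it is here that regular singularity must be used in an essential way: without it the statement fails, since $\Strat(\A^1)$ contains many nontrivial wild objects coming from $\pi_1^{\et}(\A^1)\neq 0$. I expect the hard part of establishing the invariance to reduce to the curve situation (restriction to general lines $\A^1\hookrightarrow\A^n$ meeting the boundary transversally), where one must control not merely the finite-monodromy subcategory but the full Tannakian category; keeping track that regular singularity is preserved under the relevant pull-backs and that enough of the monodromy group is detected along the slices is the delicate point on which the whole argument turns.
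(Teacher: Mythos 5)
Your preliminary reductions are sound as far as they go: for $\A^n_k$ the paper's results do give triviality of rank-one objects (Proposition \ref{prop:rank1absolute}), of objects with finite monodromy, and vanishing of $\Ext^1_{\Strat^{\rs}(\A^n_k)}(\mathcal{O},\mathcal{O})$ (Lemma \ref{lemma:extensions}). But the step carrying all the weight --- the ``rigidity'', i.e.\ $\A^1$-homotopy invariance of $\Strat^{\rs}$, or even just the slice comparison $M|_{t=0}\cong M|_{t=1}$ --- is not proved, and it cannot be taken as an input, because it is at least as strong as the theorem itself. Indeed, taking $Z=\A^{n-1}_k$, the asserted equivalence $\Strat^{\rs}(Z\times\A^1)\simeq\Strat^{\rs}(Z)$ yields $\Strat^{\rs}(\A^n_k)\simeq\Strat^{\rs}(\Spec k)=\Vectf_k$ by induction, which is exactly the statement to be proved; and since $\pi_1^{\tame}(\A^n_k)=\pi_1^{\tame}(\A^{n-1}_k)=0$, this instance of your rigidity claim is precisely an instance of the open direction \ref{conj1}$\Rightarrow$\ref{conj2} of Conjecture \ref{conj:generalGieseker} --- the very thing the paper is collecting evidence for. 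So the proposal is circular: the scaling map $m(t,v)=tv$ successfully converts the problem into the slice comparison, but no independent argument for that comparison is offered, and none is available in the literature. (A smaller point: your justification that $m^*E$ is regular singular does not work as stated, since $m$ extends only to a rational map of the compactifications $\P^1_k\times\P^n_k$ and $\P^n_k$, not a morphism; one needs instead the general stability of regular singularity under pull-back along arbitrary morphisms of smooth varieties.)

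What the paper does is supply exactly the bridge your ``reduce to lines'' intuition is missing, using the theory of exponents rather than a categorical equivalence. For $n=1$ the result follows from \cite[Prop.~4.2]{Gieseker/FlatBundles} together with Proposition \ref{prop:rank1absolute}. For $n>1$, one takes the good compactification $(\P^1_k)^n$ of $\A^n_k$ and computes the exponents of $E$ along $(\P^1_k)^{n-1}\times\{\infty\}$: by Proposition \ref{prop:exponents} these are read off from the action of the operators $\delta^{(m)}_{x_n^{-1}}$ on a suitable basis near the generic point of the divisor, and the defining congruences survive reduction modulo $(x_1-a_1,\ldots,x_{n-1}-a_{n-1})$, so each exponent of $E$ is also an exponent of a restriction $E|_{\{(a_1,\ldots,a_{n-1})\}\times\A^1_k}$, which is trivial by the case $n=1$. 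Hence all exponents vanish in $\Z_p/\Z$; by Proposition \ref{prop:nonilpotent} the bundle $E$ extends to a genuine stratified bundle across the boundary, first in codimension one and then everywhere by \cite[Lemma 2.5]{Kindler/FiniteBundles}, and one concludes because all stratified bundles on the smooth proper variety $(\P^1_k)^n$, which is birational to $\P^n_k$, are trivial (\cite[Thm.~2.2]{Gieseker/FlatBundles}). If you want to rescue your outline, replace the appeal to homotopy invariance by this exponent computation; as written, your key lemma is a restatement of the theorem, not a proof of it.
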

This was already sketched in \cite{Esnault/ECM} using a slightly different
approach.\\

If $f:Y\rightarrow X$ is a universal homeomorphism, then $f$ induces an
isomorphism
$\Pi_Y^{\tame}\xrightarrow{\cong}\Pi_X^{\tame}$ (\cite{Vidal/TopologicalInvariance}), and in Section
\ref{sec:univHomeo} we show:
\begin{Theorem}[see Theorem \ref{thm:topologicalInvarianceGeneralCase}]If
	$f:Y\rightarrow X$ is a universal homeomorphism, then pull-back along
	$f$ is an equivalence $\Strat^{\rs}(X)\rightarrow \Strat^{\rs}(X)$.
\end{Theorem}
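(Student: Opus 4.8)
The plan is to prove the statement by a descent argument: up to a power of the Frobenius, the universal homeomorphism $f$ admits a two-sided inverse, and pull-back along the Frobenius is an auto-equivalence of the category of stratified bundles. First I would reduce to a convenient geometric picture. Since $f$ is a morphism of finite type $k$-schemes which is a universal homeomorphism, it is integral and of finite type, hence finite; being radicial and surjective, and since $X$ and $Y$ are smooth and connected over the algebraically closed field $k$ (so integral), it induces a finite purely inseparable extension of function fields $k(X)\hookrightarrow k(Y)$, say of exponent $n$, so that $k(Y)^{p^n}\subseteq k(X)$.

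The key geometric step is to construct a morphism $h\colon X\to Y$ whose underlying map of topological spaces is the inverse of the homeomorphism $f$, and such that $f\circ h=F_X^n$ and $h\circ f=F_Y^n$, where $F_X$ and $F_Y$ denote the absolute Frobenius. Working locally, write $X=\Spec A$ and $Y=\Spec B$ with $A\hookrightarrow B$ finite. The hypothesis $k(Y)^{p^n}\subseteq k(X)$ together with the normality of $X$ (valid because $X$ is smooth) forces $B^{p^n}\subseteq A$; the assignment $b\mapsto b^{p^n}$ is then a ring homomorphism $B\to A$, and the resulting local morphisms glue to a morphism $h$ because this assignment is canonical. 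A direct computation on rings yields the two displayed identities.

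Now the descent argument proper: pull-back along the absolute Frobenius is an auto-equivalence of $\Strat(X)$ --- this is Frobenius descent, built into the description of stratified bundles as Frobenius-divided sheaves --- and likewise for $Y$, so $(F_X^*)^n$ and $(F_Y^*)^n$ are auto-equivalences. Applying pull-back to the identities above gives $h^*\circ f^*=(F_X^*)^n$ and $f^*\circ h^*=(F_Y^*)^n$ as endofunctors of $\Strat(X)$ and $\Strat(Y)$. Since both composites are auto-equivalences, $f^*\colon\Strat(X)\to\Strat(Y)$ is an equivalence, with explicit quasi-inverse $(F_Y^*)^{-n}\circ h^*$.

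It remains to see that $f^*$ restricts to an equivalence on the regular singular subcategories, i.e. that for $E\in\Strat(X)$ one has that $E$ is regular singular if and only if $f^*E$ is. By the compatibility of regular singularity with pull-back established in \cite{Kindler/FiniteBundles} (cf.\ Section~\ref{sec:rs}), the functor $f^*$, and also $h^*$ (which is again a universal homeomorphism), carries regular singular bundles to regular singular bundles; conversely, since $F_X^*$ both preserves and reflects regular singularity, the identity $h^*f^*E=(F_X^*)^nE$ shows that $f^*E$ regular singular forces $E$ regular singular. I expect this last point --- the compatibility of the regular singular condition with purely inseparable covers and with the Frobenius, including the matching of good compactifications along $f$ --- to be the \emph{main obstacle}, since it requires engaging directly with the definition of regular singularity via good compactifications and boundary behaviour, whereas the formal Frobenius-descent bookkeeping of the preceding paragraphs is clean.
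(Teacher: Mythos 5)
Your formal skeleton coincides with the paper's: factor a power of Frobenius through $f$, use that Frobenius pull-back is an equivalence on stratified bundles, and conclude by the two-out-of-three property of the composites (this is exactly how the paper argues in Theorem \ref{thm:partiallyCompactifiedTopInvariance} and at the $\Strat$-level in Theorem \ref{thm:topologicalInvarianceGeneralCase}). One bookkeeping caveat: your $h$ is not a $k$-morphism (if it were, $F_X^n=f\circ h$ would be $k$-linear), so the argument must be run with the relative Frobenius and the twists $X^{(n)}$, $Y^{(n)}$, as the paper does with its morphism $\bar g:\overline{X}\rightarrow\overline{Y}^{(n)}$; that is fixable. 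The genuine gap is that everything which makes the theorem nontrivial is concentrated in the step you defer to citations and then concede is ``the main obstacle'': that $f^*$ and Frobenius pull-back preserve \emph{and reflect} regular singularity. As written, your argument establishes only the (easy) equivalence $\Strat(X)\simeq\Strat(Y)$, not the statement about $\Strat^{\rs}$.

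Two concrete ingredients are missing. First, the claim that Frobenius pull-back reflects regular singularity is precisely the paper's Proposition \ref{prop:frobeniusPullbackEquivalencePartialCompactification}: if $F_{X/k}^*E$ extends to a coherent $\mathscr{D}_{\overline{X}/k}(\log D)$-module, one must produce a coherent $\mathscr{D}_{\overline{X}^{(1)}/k}(\log D^{(1)})$-extension of $E$, and the proof is a real computation --- surjectivity of the canonical map $\gamma:\mathscr{D}_{\overline{X}/k}(\log D)\rightarrow F^*_{\overline{X}/k}\mathscr{D}_{\overline{X}^{(1)}/k}(\log D^{(1)})$, resting on the congruence $\binom{sp}{p^m}\equiv\binom{s}{p^{m-1}}\bmod p$. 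This is not ``built into'' the Frobenius-divided description of $\Strat$: the paper's remark after that proposition stresses that the naive log analogue of Cartier's theorem is \emph{false}, so reflection of regular singularity along Frobenius is exactly where the subtlety lives. Second, since regular singularity is a condition on \emph{every} good partial compactification, one must match compactifications of $X$ and $Y$ along $f$; the paper does this in the proof of Theorem \ref{thm:topologicalInvarianceGeneralCase} by (i) for a given $(Y,\overline{Y})$ with smooth boundary and generic boundary point $\eta$, showing that $R=\mathcal{O}_{\overline{Y},\eta}\cap k(X)$ is a discrete valuation ring coming from a codimension-one point on a model of $k(X)$, which yields an $(X,\overline{X})$ to which $f$ extends, and (ii) conversely, for a given $(X,\overline{X})$, normalizing an affine boundary neighborhood in $k(Y)$ and shrinking to obtain a smooth pair $(V',\overline{V}')$ with a universal homeomorphism onto $(U',\overline{U}')$, so that the partially-compactified theorem applies. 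Without both ingredients, the equivalence you build does not interact with the regular singular subcategories at all.
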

In Section
\ref{sec:rs} we gather a few general facts about regular singular stratified
bundles which will be needed in the course of the text.

\vspace{2em}

\emph{Acknowledgements:} Large parts of the contents of this note were
developed while the author wrote his doctoral dissertation under the supervision of
H\'el\`ene Esnault. Her influence is undeniable, and the author is
immensely grateful for her support. The author also wishes to thank Oliver Br\"aunling
and Jakob Stix
for helpful discussions, the referee for carefully reading the manuscript, and Brian Conrad for pointing out a helpful result by
Lang; see the discussion after
Proposition \ref{prop:rank1absolute}.

This work was supported by the Sonderforschungsbereich/Transregio 45 ``Periods,
moduli spaces and the arithmetic of algebraic varieties'' of the DFG and
the European Research Council Advanced Grant 0419744101

\section{Generalities on stratified bundles}\label{sec:rs}
Let $k$ be an algebraically closed field of characteristic $p>0$, and fix a
smooth, separated, connected, finite type $k$-scheme $X$.
\begin{Definition}A \emph{stratified bundle} on $X$ is a
	$\mathscr{D}_{X/k}$-module which is $\mathcal{O}_X$-coherent. A
	morphism of stratified bundles is a morphism of
	$\mathscr{D}_{X/k}$-modules. A stratified bundle $E$ is
	called \emph{trivial} if it is isomorphic to $\mathcal{O}_X^n$ with
	its canonical diagonal $\mathscr{D}_{X/k}$-action. Here
	$\mathscr{D}_{X/k}$ is the ring of differential operators on $X$, see
	\cite[\S16]{EGA4}.

	We write $\Strat(X)$ for the category of stratified bundles.
\end{Definition}

\begin{Remark}The name ``stratified module'' goes back to Grothendieck: In
	\cite{Grothendieck/Crystals} he defines the notion of a
	stratification  relative to $k$ on an $\mathcal{O}_X$-module $E$ as an
	``infinitesimal descent datum'', and since $X$ is smooth over $k$,
	such a datum is equivalent to the datum of a
	$\mathscr{D}_{X/k}$-action on $E$, compatible with its
	$\mathcal{O}_X$-structure, see
	e.g.~\cite[Ch.~2]{BerthelotOgus/Crystalline}. As in characteristic $0$, one shows that a
	$\mathcal{O}_X$-coherent $\mathscr{D}_{X/k}$-module is automatically
	locally free, \cite[2.17]{BerthelotOgus/Crystalline}. Hence the name ``stratified bundle''.
\end{Remark}
	
The following result of Katz gives a different perspective on the notion of a
stratified bundle:
\begin{Theorem}[{Katz,
		\cite[Thm.~1.3]{Gieseker/FlatBundles}}]\label{thm:cartier}
	Let $F:X\rightarrow X$ denote the absolute Frobenius.
	The category $\Strat(X)$ is equivalent to the following category:
	\begin{description}
		\item[Objects:] Sequences of pairs
	$E:=(E_n,\sigma_n)_{n\geq 0}$ with $E_n$ a vector
	bundle on $X$ and $\sigma_n:E_n\xrightarrow{\cong} F^*E_{n+1}$ an
	$\mathcal{O}_X$-linear isomorphism. 
\item[Morphisms:]A morphism
	$\phi:(E_n,\sigma_n)\rightarrow (E'_n,\sigma'_n)$
	is a
	sequence of morphisms of vector bundles $\phi_n:E_n\rightarrow E'_n$,
	such that $F^*\sigma_{n+1}=\sigma'_n\phi_n$.
	\end{description}
	The functor giving the equivalence assigns to a stratified bundle $E$
	the sequence $(E_n,\sigma_n)_{n\geq 0}$, with 
	\[E_n(U)=\left\{ e\in E(U)| D(e)=0\text{ for all
	}D\in\mathscr{D}^{\leq p^n}_{X/k}(U)\text{ with
	}D(1)=0  \right\}
	\]
	and $\sigma_n:E_n\rightarrow F^*E_{n+1}$ the isomorphism given by
	Cartier's theorem \cite[\S 5]{Katz/Connections}. This functor is
	compatible with tensor products.
\end{Theorem}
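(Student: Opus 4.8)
The plan is to reduce to a local computation in \'etale coordinates and then to apply Cartier's theorem one Frobenius descent at a time. First I would work locally, choosing \'etale coordinates $x_1,\dots,x_d$ on an open $U\subseteq X$, so that $\mathscr{D}_{X/k}(U)$ acquires the standard $\mathcal{O}_X(U)$-basis of divided-power operators $\partial^{[I]}=\prod_i\partial_i^{[m_i]}$, determined by $\partial_i^{[m]}(x_i^n)=\binom{n}{m}x_i^{n-m}$ and satisfying $\partial_i^{[a]}\partial_i^{[b]}=\binom{a+b}{a}\partial_i^{[a+b]}$. The decisive feature in characteristic $p$ is that $\partial_i^{[m]}=\partial_i^m/m!$ whenever $m<p$, so that the operators of order $<p$ are generated by the derivations $\partial_i$ and constitute exactly a flat connection, while each $\partial_i^{[p^j]}$ is a genuinely new generator. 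Writing $m_i$ in base $p$ and using that the relevant multinomial coefficients are units (Lucas' congruence), one sees that giving a $\mathscr{D}_{X/k}$-action is the same as giving compatible actions of the \emph{level} generators $\partial_i^{[p^j]}$, $j\ge 0$, and that the operators $D$ of order $\le p^n$ with $D(1)=0$ are precisely those generated by the $\partial_i^{[p^j]}$ with $j\le n$. Consequently $E_n$ is the sheaf of sections that are horizontal for the first $n+1$ levels.

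Next I would invoke Cartier's theorem in the form cited as \cite[\S5]{Katz/Connections}: for the absolute Frobenius $F$, the assignment $N\mapsto F^*N$ with its canonical connection is an equivalence between vector bundles on $X$ and vector bundles carrying a flat connection with vanishing $p$-curvature, a quasi-inverse being $M\mapsto\ker\nabla$. On $E_n$ the levels $\le n$ act as zero, so the residual action of $\partial_i^{[p^{n+1}]}$ endows $E_n$ with a flat connection; the divided-power relation linking $\partial_i^{[p^{n+1}]}$ to $\partial_i^{[p^{n+2}]}$ forces its $p$-curvature to vanish, so Cartier descent applies and yields the canonical isomorphism $\sigma_n\colon E_n\xrightarrow{\cong}F^*E_{n+1}$ together with the fact that each $E_{n+1}$ is again a vector bundle. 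That coherent $\mathscr{D}_{X/k}$-modules are locally free to begin with is \cite[2.17]{BerthelotOgus/Crystalline}, so the starting object is a bundle and the induction makes sense.

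To prove that the functor is an equivalence I would exhibit the inverse directly. Given a sequence $(E_n,\sigma_n)$, the iterated isomorphisms provide $E\cong F^{*(n+1)}E_n$ for every $n$; since a Frobenius pullback $F^{*m}G$ carries a canonical action of all differential operators of order $<p^m$, and these actions are compatible as $m$ grows precisely because of the relation $F^*\sigma_{n+1}=\sigma'_n\phi_n$ encoded in the morphisms, their union equips the underlying bundle with a well-defined $\mathscr{D}_{X/k}$-structure. Full faithfulness and essential surjectivity then follow from the level-by-level bijections furnished by Cartier descent, and compatibility with tensor products is immediate because $F^*$ is a tensor functor and the horizontal-section functor $\ker\nabla$ commutes with tensor products of zero-$p$-curvature connections; gluing the local descriptions is harmless since every construction is canonical.

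The step I expect to be the main obstacle is the exact matching of the order filtration on $\mathscr{D}_{X/k}$ with the Frobenius (level) filtration, together with the verification that after each descent the surviving operator $\partial_i^{[p^{n+1}]}$ really acts as a connection of vanishing $p$-curvature rather than as something more complicated. This is where the combinatorics of divided powers in characteristic $p$---the Lucas-type congruences governing the coefficients $\binom{a+b}{a}$ in $\partial_i^{[a]}\partial_i^{[b]}=\binom{a+b}{a}\partial_i^{[a+b]}$---must be handled carefully, and it is this bookkeeping that makes Cartier's theorem iterate cleanly into the stated equivalence.
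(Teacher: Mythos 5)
Your proposal is correct, and it is essentially the argument of the paper's own source: the paper states this theorem without proof, citing it as Katz's result from \cite[Thm.~1.3]{Gieseker/FlatBundles}, and the proof there is exactly your iterated Cartier descent --- $E_n$ is the subsheaf of sections killed by the level generators $\partial_i^{[p^j]}$, $j\le n$ (equivalently by all operators of order $\le p^n$ killing $1$), the residual action of the $\partial_i^{[p^{n+1}]}$ is a flat connection whose $p$-curvature vanishes because $(\partial_i^{[p^{n+1}]})^p=\tfrac{(p^{n+2})!}{((p^{n+1})!)^p}\,\partial_i^{[p^{n+2}]}=0$ in characteristic $p$, and Cartier's theorem \cite[\S 5]{Katz/Connections} then supplies $\sigma_n$ level by level. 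The only point to tighten is in your reconstruction step: the compatibility of the canonical actions under the identifications $E\cong F^{*(n+1)}E_n$ comes from the fact that the Frobenius pullback $F^{*(n+1)}$ of any $\mathcal{O}_X$-linear map (in particular of each $\sigma_m$) is automatically horizontal for the canonical action of operators of order $<p^{n+1}$, whereas the relation $F^*\phi_{n+1}\circ\sigma_n=\sigma_n'\circ\phi_n$ that you invoke is what makes the inverse functor well defined on \emph{morphisms}, not what glues the object-level actions.
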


In the sequel we will freely switch between the two perspectives on stratified
bundles.
The description by the above theorem is especially nice when $X$ is proper
over $k$:

\begin{Proposition}[{\cite[Prop.~1.7]{Gieseker/FlatBundles}}]
	If $X$ is proper over $k$, then the isomorphism class of a stratified
	bundle $E=(E_n,\sigma_n)_{n\geq 0}$ only depends on the isomorphism classes of
	the vector bundles $E_1,E_2,\ldots$.
\end{Proposition}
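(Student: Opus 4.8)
The plan is to work entirely within Katz's description (Theorem~\ref{thm:cartier}) and to reformulate the statement as the nonemptiness of a certain inverse limit. Concretely, suppose $E=(E_n,\sigma_n)_{n\geq 0}$ and $E'=(E'_n,\sigma'_n)_{n\geq 0}$ are stratified bundles with $E_n\cong E'_n$ as vector bundles for all $n\geq 1$; since $\sigma_0$ identifies $E_0$ with $F^*E_1$, we automatically also get $E_0\cong E'_0$, so we may as well assume $E_n\cong E'_n$ for all $n\geq 0$. An isomorphism $E\xrightarrow{\cong}E'$ of stratified bundles is precisely a sequence of $\mathcal{O}_X$-linear isomorphisms $\phi_n\colon E_n\to E'_n$ making the relevant squares commute, i.e.\ $\sigma'_n\circ\phi_n=F^*\phi_{n+1}\circ\sigma_n$, equivalently $\phi_n=(\sigma'_n)^{-1}\circ(F^*\phi_{n+1})\circ\sigma_n$. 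Thus, writing $H_n:=\Hom_{\mathcal{O}_X}(E_n,E'_n)$ and defining $T_n\colon H_{n+1}\to H_n$ by $T_n(\psi):=(\sigma'_n)^{-1}\circ(F^*\psi)\circ\sigma_n$, the task becomes: produce a compatible sequence $(\phi_n)\in\varprojlim(H_n,T_n)$ all of whose entries are isomorphisms.

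First I would record the two structural facts that drive the argument. (i) Since $X$ is proper over $k$, each $H_n$ is a \emph{finite-dimensional} $k$-vector space. (ii) Because the absolute Frobenius is $k$-semilinear, each $T_n$ is additive with $T_n(\lambda\psi)=\lambda^p T_n(\psi)$; as $k$ is perfect, the image of any composite $T_n\circ T_{n+1}\circ\cdots\circ T_{n+m-1}$ is a $k$-linear subspace of $H_n$ (one absorbs a scalar $\mu$ by replacing the argument with its $p^m$-th root $\mu^{1/p^m}$). Moreover each $T_i$ sends isomorphisms to isomorphisms and, crucially, \emph{reflects} them: since $F$ is finite flat, hence faithfully flat, on the smooth scheme $X$, a morphism $\psi$ is an isomorphism if and only if $F^*\psi$ is, so $T_i(\psi)$ is an isomorphism if and only if $\psi$ is.

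Next I would run the Mittag-Leffler argument. By (i) and (ii) the decreasing chain of subspaces $H_n\supseteq\operatorname{im}(T_n)\supseteq\operatorname{im}(T_n T_{n+1})\supseteq\cdots$ stabilizes to a subspace $H'_n\subseteq H_n$, and a standard check shows the induced maps $T_n\colon H'_{n+1}\to H'_n$ are surjective. Since each $E_n\cong E'_n$, the set $I_n$ of isomorphisms in $H_n$ is nonempty; applying a long enough composite of the $T$'s to an element of some $I_m$ and using that composites preserve isomorphisms shows that $H'_n$ meets $I_n$ for every $n$. Finally, choosing one isomorphism $\phi_0\in H'_0$ and lifting it step by step through the surjections $T_n\colon H'_{n+1}\twoheadrightarrow H'_n$ yields a compatible sequence $(\phi_n)\in\varprojlim(H'_n,T_n)$; since each composite $T_0\circ\cdots\circ T_{n-1}$ carries $\phi_n$ to the isomorphism $\phi_0$ and reflects isomorphisms, every $\phi_n$ is itself an isomorphism. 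This produces the sought isomorphism $E\cong E'$.

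I expect the main obstacle to be exactly the interplay between the linear-algebraic and the ``isomorphism'' constraints: the inverse limit $\varprojlim H_n$ is nonempty for soft reasons, but one must guarantee it contains a point all of whose coordinates lie in the open locus of isomorphisms. The two ingredients that resolve this are the stabilization of the subspace chain (which rests on properness via finite-dimensionality of $\Hom$, together with perfectness of $k$ to keep the images $k$-linear) and the faithful flatness of Frobenius (which lets an isomorphism at level $0$ propagate back to all levels). Verifying that $T_n$ restricts to a genuine surjection $H'_{n+1}\to H'_n$ of the stabilized spaces, and that these stabilized spaces still meet $I_n$, are the points that will require the most care.
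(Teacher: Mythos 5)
Your proof is correct, and since the paper itself does not reprove this statement but simply cites \cite[Prop.~1.7]{Gieseker/FlatBundles}, the relevant comparison is with Gieseker's argument, which yours essentially reproduces: finite-dimensionality of the $\Hom$-spaces from properness, Frobenius-semilinearity making the images of the composites $k$-subspaces (using that $k$ is perfect), stabilization of the decreasing chain and surjectivity of the stabilized system \`a la Mittag-Leffler, and propagation of the isomorphism property through the tower via faithful flatness of Frobenius on a smooth scheme. The two points you flag as delicate (surjectivity of $T_n\colon H'_{n+1}\to H'_n$, and $H'_n\cap I_n\neq\emptyset$) do go through: the former by the standard coset-stabilization argument, the latter because the stabilized subspace $H'_n$ actually \emph{equals} $\operatorname{im}(T_n\circ\cdots\circ T_{n+m-1})$ for $m$ large, and that image visibly contains the composite applied to any isomorphism $E_{n+m}\cong E'_{n+m}$.
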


\subsection{Regular singular stratified bundles}
	We recall from \cite{Kindler/FiniteBundles} the definition of regular
	singular stratified bundles. 
	\begin{Definition}
	Let $X$ be a smooth, separated, finite type $k$-scheme.
	\begin{enumerate}[label={(\alph*)}]\label{defn:diffops}
		\item If $\overline{X}$ is a smooth, separated, finite type
			$k$-scheme containing $X$ as as an open dense
			subscheme such that $\overline{X}\setminus X$ is a
			strict normal crossings divisor, then the pair
			$(X,\overline{X})$ is called \emph{good partial
			compactification of $X$}. If $\overline{X}$ is also proper, then
			$(X,\overline{X})$ is called \emph{good
			compactification of $X$.}
		\item\label{item:diffops} If $(X,\overline{X})$ is a good partial
			compactification, then
			$\mathscr{D}_{\overline{X}/k}(\log
			\overline{X}\setminus X)$ is by definition the sheaf
			of subalgebras of $\mathscr{D}_{\overline{X}/k}$
			spanned over an open $\overline{U}\subset
			\overline{X}$ by those differential operators in
			$\mathscr{D}_{\overline{X}/k}(\overline{U})$, fixing
			all powers of the ideal of the divisor
			$(\overline{X}\setminus X)
			\overline{U}$. Lets make this explicit: If
			$x_1,\ldots, x_n\in
			H^0(\overline{U},\mathcal{O}_{\overline{U}})$ are
			coordinates on $\overline{U}$, i.e.~if they define an \'etale morphism
			$\overline{U}\rightarrow \A^n_k$, then
			$\mathscr{D}_{\overline{U}/k}$ is spanned by operators
			$\partial_{x_i}^{(m)}$, $i=1,\ldots, r$, $m\geq 0$,
			which ``behave'' like
			$\frac{1}{m!}\partial^{m}/\partial x_i^m$. If
			$\overline{U}\cap (\overline{X}\setminus X)$ is defined
			by $x_1\cdot\ldots\cdot x_r$, then the subring
			$\mathscr{D}_{\overline{X}/k}(\log
			\overline{X}\setminus X)|_{\overline{U}}$ is spanned
			by $\delta^{(m)}_{x_i}:=x_i^m\partial_{x_i}^{(m)}$,
			$i=1,\ldots, r$, $\partial_{x_i}^{(m)}$, $i>r, m\geq
			0$. 
			
			This ring can also be defined intrinsically as the ring of differential operators
			associated with a morphism of $\log$-schemes, which,
			for example,  is
			studied in \cite{Montagnon}.
		\item If $(X,\overline{X})$ is a good partial
			compactification, then a stratified bundle $E\in \Strat(X)$ is called
			\emph{$(X,\overline{X})$-regular singular} if $E$
			extends to an $\mathcal{O}_{\overline{X}}$-torsion
			free, $\mathcal{O}_{\overline{X}}$-coherent
			$\mathscr{D}_{\overline{X}/k}(\log
			\overline{X}\setminus X)$-module. 

			We write $\Strat^{\rs}( (X,\overline{X}))$ for the
			full subcategory of $\Strat(X)$ with objects the
			$(X,\overline{X})$-regular singular stratified
			bundles.
		\item A stratified bundle $E$ is called \emph{regular
			singular} if $E$ is $(X,\overline{X})$-regular
			singular for every good partial compactification
			$(X,\overline{X})$ of
			$X$.

		We write $\Strat^{\rs}(X)$ for the full subcategory of
			$\Strat(X)$ with objects the regular singular stratified
			bundles.
	\end{enumerate}
\end{Definition}
\begin{Remark}The notion of $(X,\overline{X})$-regular singularity for stratified bundles
first appeared (to the author's knowledge)
in \cite{Gieseker/FlatBundles} for good compactifications $(X,\overline{X})$.
Gieseker attributes some of the ideas used in [loc.~cit.] to
Katz.  
In \cite{Kindler/FiniteBundles} and \cite{Kindler/thesis} this notion of
regular singularity is extended to varieties for which resolution of
singularities is not known to hold, and its connection with tame ramification
is studied.
\end{Remark}

For the purpose of this article, the following facts are of importance:
\begin{Proposition}\label{prop:rsfacts}
	Let $X$ be a smooth, separated, finite type $k$-scheme. 
	\begin{enumerate}
		\item\label{rsfacts1} If $X$ admits a good compactification
			$(X,\overline{X})$, then a stratified bundle $E$ on
			$X$ is regular singular if and only if it is
			$(X,\overline{X})$-regular singular.
		\item\label{rsfacts2} If $E$ is a stratified bundle of rank $1$, then $E$ is
			regular singular.
		\item\label{rsfacts3}If
	$\iota:\Strat^{\rs}(X)\rightarrow \Strat(X)$ denotes the inclusion
	functor, then for every object $E \in \Strat^{\rs}(X)$, $\iota$
	induces an equivalence
	$\left<E\right>_{\otimes}\xrightarrow{\cong}\left<\iota(E)\right>_{\otimes}$.
	\end{enumerate}
\end{Proposition}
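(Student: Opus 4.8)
The plan is to prove the three assertions separately, isolating in each case the one nontrivial geometric input and citing \cite{Kindler/FiniteBundles} for the structural facts about regular singularity that it supplies; throughout I use the Frobenius-descent description $(E_n,\sigma_n)_{n\ge 0}$ of Theorem \ref{thm:cartier} and the principle that regular singularity along a strict normal crossings boundary is a condition in codimension one, i.e.\ a condition at the generic points of the boundary divisor. I expect the comparison in \ref{rsfacts1} to be the main obstacle of the whole proposition.

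For \ref{rsfacts1}, the implication ``regular singular $\Rightarrow (X,\overline X)$-regular singular'' is immediate, since by definition regular singularity demands extendability for \emph{every} good partial compactification, and $(X,\overline X)$ is one of them. For the converse I would argue that a $\mathscr{D}_{\overline X/k}(\log \overline X\setminus X)$-lattice over the \emph{proper} scheme $\overline X$ already forces extendability over any other good partial compactification $(X,\overline X')$. The point is that $(X,\overline X')$-regular singularity can be tested on the discrete valuation rings $\mathcal{O}_{\overline X',\eta'}$ attached to the generic points $\eta'$ of the components of $\overline X'\setminus X$: each such ring has fraction field $k(X)$, so by the valuative criterion of properness the inclusion $\Spec k(X)\hookrightarrow \overline X$ extends uniquely to $\Spec \mathcal{O}_{\overline X',\eta'}\to \overline X$, and pulling the log lattice on $\overline X$ back along this map yields a $\log$-lattice over the DVR, keeping the pole orders of the stratification bounded. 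This codimension-one/valuative reduction is exactly what \cite{Kindler/FiniteBundles} provides, and it is the technical heart of the matter: because resolution of singularities, hence a common refinement of $\overline X$ and $\overline X'$, is not available in general, one cannot simply dominate both compactifications and must argue valuatively instead.

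For \ref{rsfacts2}, I would fix a good partial compactification $(X,\overline X)$ and write the rank-$1$ bundle as $(L_n,\sigma_n)$ with $\sigma_n:L_n\xrightarrow{\cong}F^*L_{n+1}$; since $F$ is the absolute Frobenius, $F^*L_{n+1}=L_{n+1}^{\otimes p}$, so $[L_n]=p[L_{n+1}]$ in $\Pic(X)$. Choosing extensions $\overline L_n\in\Pic(\overline X)$ of the $L_n$ (possible because $\Pic(\overline X)\twoheadrightarrow \Pic(X)$ with kernel generated by the boundary components $Z_i$), the relation becomes $[\overline L_n]=p[\overline L_{n+1}]+\sum_i a_{n,i}[Z_i]$ for integers $a_{n,i}$. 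Iterating yields $[\overline L_0]=p^{N}[\overline L_N]+\sum_i\big(\sum_{n<N}a_{n,i}p^{n}\big)[Z_i]$, and the coefficient $\sum_{n<N}a_{n,i}p^{n}$ converges $p$-adically as $N\to\infty$ to a residue $\rho_i:=\sum_{n\ge 0}a_{n,i}p^{n}\in\Z_p$. Thus the exponents of a stratified line bundle along each $Z_i$ are automatically $p$-adic integers, which is precisely the residue condition guaranteeing the existence of a $\mathscr{D}_{\overline X/k}(\log\overline X\setminus X)$-lattice in the sense of \cite{Kindler/FiniteBundles}; hence $E$ is $(X,\overline X)$-regular singular, and by \ref{rsfacts1} regular singular. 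This is genuinely special to characteristic $p$: the $p$-adic convergence is exactly what rules out irregular rank-$1$ objects such as the analogue of $\exp(x)$ on the affine line.

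For \ref{rsfacts3}, the statement becomes formal once one knows that $\Strat^{\rs}(X)$ is a Tannakian subcategory of $\Strat(X)$ closed under subquotients. The functor $\iota$ is the inclusion of a full subcategory, hence fully faithful and compatible with $\otimes$, duals and $\oplus$, and these operations preserve regular singularity. The one point to verify is closure under subobjects and quotients: given $E\in\Strat^{\rs}(X)$ with a $\mathscr{D}_{X/k}$-submodule $E'\subseteq E$, I would fix a good partial compactification and a log lattice $\overline E$ of $E$, and set $\overline{E'}:=j_*E'\cap \overline E$ inside $j_*E$, where $j:X\hookrightarrow\overline X$. This is $\mathcal{O}_{\overline X}$-coherent and torsion free, and it is stable under $\mathscr{D}_{\overline X/k}(\log\overline X\setminus X)$ because both $j_*E'$ and $\overline E$ are; hence $E'$ is $(X,\overline X)$-regular singular for every good partial compactification, so regular singular, and the argument for quotients is dual. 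Granting this closure, every object of $\left<\iota(E)\right>_{\otimes}$, being a subquotient of a finite direct sum of tensor powers of $\iota(E)$ and its dual, already lies in $\Strat^{\rs}(X)$ and therefore in $\left<E\right>_{\otimes}$; since the reverse inclusion is trivial, $\iota$ restricts to the asserted $\otimes$-equivalence.
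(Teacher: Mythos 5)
Your treatments of \ref{rsfacts1} and \ref{rsfacts3} are acceptable in outline, but note that the paper disposes of both by citation: \ref{rsfacts1} is \cite[Prop.~7.5]{Kindler/FiniteBundles} and \ref{rsfacts3} is \cite[Prop.~4.5]{Kindler/FiniteBundles}. What you sketch --- the valuative-criterion/codimension-one argument for \ref{rsfacts1}, and for \ref{rsfacts3} the closure of $\Strat^{\rs}(X)$ under subquotients via $\overline{E'}:=j_*E'\cap\overline{E}$ followed by the formal Tannakian argument --- is essentially a reconstruction of the content of those cited results, and I see no error in it.

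The genuine gap is in \ref{rsfacts2}. Regular singularity is \emph{defined} as the existence of a coherent, torsion-free, $\mathscr{D}_{\overline{X}/k}(\log \overline{X}\setminus X)$-stable extension of $E$, and your argument never constructs such an extension. Instead you do bookkeeping with the classes $[L_n]$ in $\Pic$ and declare the resulting $p$-adic numbers $\rho_i$ to be ``exponents'' satisfying ``the residue condition guaranteeing the existence of a log lattice''. No such criterion exists in this framework, and the reasoning is circular: in Proposition \ref{prop:exponents} exponents are defined only \emph{after} one has a log lattice, and they then automatically lie in $\Z_p$; their lying in $\Z_p$ is not a condition that can certify regular singularity. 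Worse, your $\rho_i$ are manufactured purely from isomorphism classes of the underlying line bundles and are blind to the stratification. Take $X=\G_{m,k}\subset\overline{X}=\P^1_k$: every rank-$1$ stratified bundle on $\G_{m,k}$ has all $L_n\cong\mathcal{O}_X$, so your recipe yields $\rho=0$ identically (moreover $[\{0\}]=[\{\infty\}]$ in $\Pic(\P^1_k)$, so the coefficients $a_{n,i}$ are not even well defined), whereas there exist rank-$1$ stratified bundles on $\G_{m,k}$ --- Kummer bundles, corresponding to finite prime-to-$p$ characters of $\pi_1^{\tame}$ --- whose exponents are $a/N\neq 0$ in $\Z_p/\Z$. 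So the quantities you compute are not the exponents, and nothing about a log lattice follows from them. (A smaller slip: your final appeal to \ref{rsfacts1} requires a good \emph{proper} compactification, which is not assumed to exist in \ref{rsfacts2}; since your partial compactification was arbitrary this appeal is also unnecessary.)

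What the paper actually does for \ref{rsfacts2}, following \cite[Lemma 3.12]{Gieseker/FlatBundles}, is a direct local construction of the lattice: on a chart with coordinate ring $A$ and coordinates $x_1,\ldots,x_n$, with $E$ free of rank $1$ with basis $e$ over $A[x_1^{-1}\cdots x_r^{-1}]$, Theorem \ref{thm:cartier} (Cartier's theorem) provides for each $N$ a nonzero $s\in A$ with $\partial^{(m)}_{x_i}(se)=0$ for all $m\leq p^N$; then
\[
\delta^{(m)}_{x_i}(e)=\delta^{(m)}_{x_i}(s^{-1}se)=\delta^{(m)}_{x_i}(s^{-1})s\,e,
\qquad \delta^{(m)}_{x_i}(s^{-1})s\in A,
\]
so the $A$-lattice $eA$ is stable under the logarithmic operators. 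It is this use of the rank-$1$ hypothesis together with the Frobenius-descent description that makes the statement true in characteristic $p$, and it is exactly the step your proposal skips.
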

\begin{proof}
	Statement \ref{rsfacts1} is \cite[Prop.~7.5]{Kindler/FiniteBundles},
	\ref{rsfacts3} is \cite[Prop.~4.5]{Kindler/FiniteBundles},
	and \ref{rsfacts2} is \cite[Lemma 3.12]{Gieseker/FlatBundles}. We
	recall some arguments for \ref{rsfacts2} from [loc.~cit.] for convenience:
	Let $A$ be a finite type $k$-algebra with
	coordinates $x_1,\ldots, x_n$, and $M$ a free rank $1$ module
	over $A[x_1^{-1}\cdot\ldots\cdot x_r^{-1}]$, $1\leq
	r\leq n$. Assume that $M$ carries an action of
	$\mathscr{D}_{A[x_1^{-1},\ldots,x_r^{-1}]/k}$, and that $e$ is a basis
	of $M$.  With the notation from Definition \ref{defn:diffops},
	\ref{item:diffops}, it suffices to show that $\delta_{x_i}^{(m)}(e)\subset
	eA$ for every $m>0$ and every $1\leq i \leq r$. 
	By Theorem \ref{thm:cartier}, for $N\in \N$ there exists a nonzero section
	$s\in A$ such that
	$\partial_{x_i}^{(m)}(se)=0$ for all $m\leq p^N$. But then in $\Frac
	A$ we have
	\[\delta^{(m)}_{x_i}(e)=\delta^{(m)}_{x_i}(s^{-1}se)=\delta^{(m)}_{x_i}(s^{-1})se\]
	and $\delta^{(m)}_{x_i}(s^{-1})s\in A$.
\end{proof}
\begin{Remark}Recall that the analogous statement to \ref{rsfacts2} in
	characteristic $0$ is false.
\end{Remark}

We recall the definition of the monodromy group of a stratified bundle:
\begin{Definition}
	If $E\in \Strat(X)$, then
	$\left<E\right>_{\otimes}$ is the Tannakian subcategory
	category of  $\Strat(X)$ generated by $E$. If
	$\omega:\left<E\right>_{\otimes}\rightarrow \Vectf_k$
	is a fiber functor, then the associated $k$-group
	scheme is called the \emph{monodromy group of $E$
	(with respect to $\omega$)}. By \cite{DosSantos}, the monodromy group
	of a stratified bundle is always smooth. 
	
	Note that if $E$ is
	regular singular, then by Proposition
	\ref{prop:rsfacts}, \ref{rsfacts2}, it makes no difference whether we
	compute the monodromy group of $E$ as an object of $\Strat^{\rs}(X)$
	or $\Strat(X)$. 
	
	Moreover, note that since $k$ is
	algebraically closed, the isomorphism class of the
	monodromy group does not depend on the choice of the
	fiber functor.
\end{Definition}

To finish this subsection, we briefly recall the notion of exponents of
regular singular stratified bundles. For more details see
\cite{Kindler/FiniteBundles} or \cite{Kindler/thesis}.

\begin{Proposition}\label{prop:exponents}Let $(X,\overline{X})$ be a good partial compactification
	of $X$, such that $D:=\overline{X}\setminus X$ is irreducible. Let
	$\overline{E}$ be a	$\mathscr{D}_{\overline{X}/k}(\log
	\overline{X}\setminus X)$-module, which is
	$\mathcal{O}_{\overline{X}}$-locally free of finite rank.
	Then the following are true:
	\begin{enumerate}
		\item \emph{(\cite[Lemma 3.8]{Gieseker/FlatBundles})} There exists a decomposition
			\[ \overline{E}|_D=\bigoplus_{\alpha\in \Z_p} F_{\alpha},\]
			with the property that if $x_1,\ldots, x_n$ are local
			coordinates around the generic point of $D$, such that
			$D=(x_1)$, then $e+x_1\overline{E}\in F_{\alpha}$ if and only
			if 
			\[\delta_{x_1}^{(m)}(e)=\binom{\alpha}{m}e+x_1\overline{E}.\]
			For the definition of $\delta_{x_1}^{(m)}$, see
			Definition \ref{defn:diffops}, \ref{item:diffops}.
			Write $\Exp(\overline{E})\subset \Z_p$ for the set of
			those $\alpha\in \Z_p$ for which $\rank F_\alpha\neq
			0$.
		\item \emph{(\cite[Prop.~4.12]{Kindler/FiniteBundles})} If $\overline{E}'$ is a second
			$\mathcal{O}_{\overline{X}}$-locally free, finite
			rank, $\mathscr{D}_{\overline{X}/k}(\log
			\overline{X}\setminus X)$-module, such that the
			stratified bundles $\overline{E}|_X$ and
			$\overline{E}'|_X$ are isomorphic, then  the sets
			$\Exp(\overline{E})$ and $\Exp(\overline{E'})$ have
			the same image in $\Z_p/\Z$.
	\end{enumerate}
\end{Proposition}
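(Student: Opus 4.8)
The plan is to prove both parts by passing to the local situation at the generic point $\eta$ of $D$, where $R:=\mathcal{O}_{\overline{X},\eta}$ is a DVR with uniformizer $t=x_1$, and to reduce everything to the combinatorics of the divided-power Euler operators $\delta_{x_1}^{(m)}=x_1^m\partial_{x_1}^{(m)}$ modulo $x_1$. First I would record the two elementary computations underlying part (a). Using $\partial_{x_1}^{(i)}(x_1)=0$ for $i\ge 2$, the Leibniz rule gives $\delta_{x_1}^{(m)}(x_1 e)=x_1\big(\delta_{x_1}^{(m)}(e)+\delta_{x_1}^{(m-1)}(e)\big)$, so each $\delta_{x_1}^{(m)}$ preserves $x_1\overline{E}$ and descends to an operator $\bar\delta^{(m)}$ on $\overline{E}|_D$; and since $\delta_{x_1}^{(i)}(f)=x_1^i\partial_{x_1}^{(i)}(f)\in x_1\mathcal{O}_{\overline{X}}$ for $i\ge 1$, the Leibniz expansion shows $\bar\delta^{(m)}(\bar f\bar e)=\bar f\,\bar\delta^{(m)}(\bar e)$, i.e.\ $\bar\delta^{(m)}$ is $\mathcal{O}_D$-linear. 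As the $\delta_{x_1}^{(m)}$ also commute with the transverse operators $\partial_{x_i}^{(m')}$ ($i>1$), any eigenspace I produce will automatically be an $\mathscr{D}_D$-submodule and an $\mathcal{O}_D$-direct summand.

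The core of part (a) is to simultaneously diagonalise this commuting family. Here the arithmetic input is Lucas' theorem, $\binom{k}{p^j}\equiv k_j\pmod p$ where $k_j$ is the $j$-th $p$-adic digit of $k$. Correspondingly I would establish, as identities in $\mathscr{D}_{\overline{X}/k}(\log D)$ reduced modulo $x_1$, that $(\bar\delta^{(p^j)})^p=\bar\delta^{(p^j)}$ and that every $\bar\delta^{(m)}$ is the polynomial in the $\bar\delta^{(p^j)}$ dictated by the base-$p$ digits of $m$. The first identity makes the $\bar\delta^{(p^j)}$ commuting semisimple operators with eigenvalues in $\F_p$, generating a commutative $\F_p$-subalgebra of $\mathrm{End}_{\mathcal{O}_D}(\overline{E}|_D)$ that is a quotient of the algebra of continuous $\F_p$-valued functions on $\Z_p=\varprojlim\Z/p^n$. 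Diagonalising them produces, for each digit sequence, a joint eigenspace $F_\alpha$ on which $\bar\delta^{(p^j)}$ acts by the $j$-th digit of $\alpha\in\Z_p$; the multiplicativity identity then shows $\bar\delta^{(m)}$ acts on $F_\alpha$ by $\binom{\alpha}{m}\bmod p$ for all $m$, which is exactly the asserted decomposition, with $\Exp(\overline{E})$ the support of $\alpha\mapsto\rank F_\alpha$. A final check is that changing $x_1$ by a unit alters each $\delta_{x_1}^{(m)}$ only by operators in $x_1\mathscr{D}(\log)$, so the decomposition and $\Exp(\overline{E})\subset\Z_p$ are independent of the chosen coordinate.

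For part (b), let $\overline{E},\overline{E}'$ be two log-extensions of one stratified bundle on $X$; localising at $\eta$ they become two $R$-lattices $L,L'$ inside the common $\mathscr{D}(\log)$-module $V:=L\otimes_R\Frac R$, both stable under all $\delta_{x_1}^{(m)}$. The single computation that drives the result is the effect of multiplication by $t=x_1$: under the canonical isomorphism $L/tL\xrightarrow{\ \cong\ }tL/t^2L$ the operator $\bar\delta^{(m)}$ on $tL/t^2L$ corresponds to $\bar\delta^{(m)}+\bar\delta^{(m-1)}$ on $L/tL$ (by the formula $\delta_{x_1}^{(m)}(te)=t(\delta_{x_1}^{(m)}(e)+\delta_{x_1}^{(m-1)}(e))$), and Pascal's rule $\binom{\alpha}{m}+\binom{\alpha}{m-1}=\binom{\alpha+1}{m}$ then shifts every exponent $\alpha$ to $\alpha+1$. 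Hence $\Exp(tL)=\Exp(L)+1$, and passing between commensurable $\delta$-stable lattices moves $\Exp$ only by integer translations. To conclude I would run a dévissage: since $t^NL\subseteq L'\subseteq t^{-N}L$, connect $L$ to $L'$ by a chain of $\delta$-stable lattices whose steps are either multiplication by $t$ or enlargement by a single $\bar\delta$-eigenline, checking via the Pascal computation that each step shifts $\Exp$ by an integer; therefore $\Exp(\overline{E})$ and $\Exp(\overline{E}')$ agree in $\Z_p/\Z$.

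The main obstacle in part (b) is precisely this last dévissage: controlling $\Exp$ under \emph{arbitrary} lattice modifications rather than the clean operation $L\mapsto tL$. The cleanest way to dispatch it is to note that the image of $\Exp$ in $\Z_p/\Z$ is intrinsic to $V$ over $\Frac R$ — it is the spectrum modulo $\Z$ of the residue read off from the $\bar\delta^{(p^j)}$ on any lattice — so its lattice-independence follows once the $L\mapsto tL$ shift and commensurability are in hand. In part (a) the analogous pressure point is proving the operator identities $(\bar\delta^{(p^j)})^p=\bar\delta^{(p^j)}$ and the digit-wise multiplicativity purely inside $\mathscr{D}_{\overline{X}/k}(\log D)$ modulo $x_1$; this is where the $p$-adic combinatorics of divided-power operators must be confronted, and it is the genuinely technical heart of the statement.
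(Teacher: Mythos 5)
The paper itself gives no proof of this proposition: part (a) is imported from \cite[Lemma 3.8]{Gieseker/FlatBundles} and part (b) from \cite[Prop.~4.12]{Kindler/FiniteBundles}, so your attempt has to be measured against those standard arguments, which it largely reproduces. Your part (a) is correct and is exactly Gieseker's argument: the $\delta_{x_1}^{(m)}$ preserve $x_1\overline{E}$ and are $\mathcal{O}_D$-linear modulo $x_1$, the $\bar\delta^{(p^j)}$ are commuting operators satisfying $T^p=T$, and Lucas' congruence converts joint digit eigenvalues into the binomial-coefficient description indexed by $\Z_p$. One simplification you may find useful: the operator identities you call the ``technical heart'' are painless, because $\mathscr{D}$ acts faithfully on $\mathcal{O}$ near the generic point of $D$ and $\delta_{x_1}^{(m)}(x_1^n)=\binom{n}{m}x_1^n$, so both $(\delta_{x_1}^{(p^j)})^p=\delta_{x_1}^{(p^j)}$ and the digit-wise multiplicativity can be verified on monomials, where they reduce to Fermat and Lucas.

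In part (b) your strategy (localize at the generic point, the Pascal computation $\Exp(tL)=\Exp(L)+1$, commensurability, a chain of elementary modifications) is the right one, but as written there is a genuine gap: your proposed ``cleanest way to dispatch'' the d\'evissage is circular. That the image of $\Exp$ in $\Z_p/\Z$ is ``intrinsic to $V$'' is precisely the assertion to be proved, and it does \emph{not} follow from the $t$-shift plus commensurability alone; you must compare $\Exp(L)$ and $\Exp(L')$ for an inclusion $L'\subseteq L$ of $\delta$-stable lattices which is not of the form $t^NL\subseteq L$. Two points make your chain argument rigorous. (i) Existence of the chain: any simple subquotient $M$ of the finite-length module $L/L'$ over the ring generated by $R$ and the $\delta^{(m)}$ is killed by $t$ (since $tM$ is a $\delta$-stable proper submodule) and is then a joint eigenline of the commuting semisimple residue operators; hence a composition series produces $\delta$-stable intermediate lattices with length-one steps. (ii) The correct per-step statement: a colength-one modification $L'\subset L$ (for which $tL\subseteq L'$ holds automatically) does not shift the set $\Exp$ by an integer; it replaces exactly one exponent $c$, namely that of the one-dimensional quotient $L/L'$, by $c+1$ and fixes the rest. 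This follows from the exact sequences $0\to L'/tL\to L/tL\to L/L'\to 0$ and $0\to tL/tL'\to L'/tL'\to L'/tL\to 0$ together with the $t$-twist isomorphism $L/L'\cong tL/tL'$ --- which is where your Pascal identity genuinely enters. Each step then preserves the image of the exponents in $\Z_p/\Z$, and the proposition follows; so the approach matches the cited proof, but your concluding justification must be replaced by this composition-series bookkeeping.
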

\begin{Definition}[{\cite[Def.~4.13]{Kindler/FiniteBundles}}]
	If $(X,\overline{X})$ is a good partial compactification, $E$ an
	$(X,\overline{X})$-regular singular stratified bundle and 
	$D$ an irreducible component of $\overline{X}\setminus X$, then we
	define the \emph{exponents of $E$ along $D$} as follows: Let
	$\overline{U}$ be a sufficiently small open neighborhood of the
	generic point of $D$, such that there exists an
	$\mathcal{O}_{\overline{U}}$-locally free
	$\mathscr{D}_{\overline{U}/k}(\log \overline{U}\setminus X)$-module
	extending $E|_{\overline{U}\cap X}$. Then the set of exponents of $E$
	along $D$ is the image of the set $\Exp(\overline{E})$ in $\Z_p/\Z$ from
	Proposition \ref{prop:exponents}. This construction is well-defined by
	Proposition \ref{prop:exponents}.
\end{Definition}

Having all exponents equal to $0 \mod \Z$ is a stronger condition in
characteristic $p>0$ than in characteristic $0$. In particular there are no
regular singular stratified bundles with nontrivial ``nilpotent residues'':
\begin{Proposition}[{\cite[Cor.~5.4]{Kindler/FiniteBundles}}]\label{prop:nonilpotent}
	Let $(X,\overline{X})$ be a good partial compactification and $E$ an
	$(X,\overline{X})$-regular singular stratified bundle. If the
	exponents of $E$ along all components of $\overline{X}\setminus X$
	are $0$ in $\Z_p/\Z$, then there exists a stratified bundle
	$\overline{E}$ on $\overline{X}$ extending $E$.
\end{Proposition}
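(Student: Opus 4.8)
The statement is local along the boundary $D=\overline{X}\setminus X$, so the plan is to reduce to a neighborhood of the generic point of each irreducible component $D_i$ of $D$, construct a candidate extension there, and glue. Fix the given logarithmic extension $\overline{E}$, an $\mathcal{O}_{\overline{X}}$-locally free $\mathscr{D}_{\overline{X}/k}(\log D)$-module with $\overline{E}|_X\cong E$. Since any $\mathcal{O}_{\overline{X}}$-coherent $\mathscr{D}_{\overline{X}/k}$-module is automatically locally free, it suffices to produce an $\mathcal{O}_{\overline{X}}$-coherent, $\mathscr{D}_{\overline{X}/k}$-stable subsheaf $\overline{E}'\subset j_*E$, where $j:X\hookrightarrow\overline{X}$, agreeing with $E$ on $X$; local freeness then comes for free. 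Because $D$ is a strict normal crossings divisor, the modifications near distinct components involve differential operators in independent coordinate directions, so they will commute and glue to a global sheaf; I therefore concentrate on one component $D_i$, with local coordinate $x_1$ cutting out $D_i$.

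Near the generic point of $D_i$ I would exploit the exponent decomposition of Proposition \ref{prop:exponents}: write $\overline{E}|_{D_i}=\bigoplus_{\alpha}F_\alpha$, where $F_\alpha$ is the common eigenspace on which $\delta_{x_1}^{(m)}$ acts by $\binom{\alpha}{m}$. The hypothesis that all exponents vanish in $\Z_p/\Z$ means that every $\alpha$ occurring lies in $\Z\subset\Z_p$. The operators $\delta_{x_1}^{(m)}$ commute, so this eigenspace decomposition lifts, after localizing (or completing) at the generic point of $D_i$, to a decomposition of $\overline{E}$ itself into pieces of pure integral exponent. For each piece of exponent $\alpha\in\Z$ I would perform the elementary modification twisting by $\mathcal{O}_{\overline{X}}(\alpha D_i)$, whose natural $\mathscr{D}_{\overline{X}/k}(\log D)$-structure has exponent $-\alpha$ along $D_i$, since $\delta_{x_1}^{(1)}(x_1^{-\alpha})=-\alpha\, x_1^{-\alpha}$, and trivial exponent along the other components. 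These modifications do not change the restriction to $X$, and after carrying them out one obtains a new logarithmic lattice $\overline{E}'$ all of whose exponents along every $D_i$ equal $0\in\Z_p$.

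It then remains to prove the crux: an $\mathcal{O}_{\overline{X}}$-locally free $\mathscr{D}_{\overline{X}/k}(\log D)$-module all of whose exponents equal $0\in\Z_p$ is already stable under the full ring $\mathscr{D}_{\overline{X}/k}$, i.e. the operators $\partial_{x_1}^{(m)}=x_1^{-m}\delta_{x_1}^{(m)}$ preserve the lattice. This is where the positive-characteristic phenomenon enters, and where the argument departs sharply from characteristic $0$: an integral exponent is not by itself enough, but the relations in $\mathscr{D}_{\overline{X}/k}(\log D)$ rigidify the residues. The Euler operator $\delta_{x_1}^{(1)}$ satisfies a $p$-th power relation forcing its eigenvalue on $\overline{E}'|_{D_i}$ to lie in $\F_p$, and the analogous relations at each Frobenius level read off the successive $p$-adic digits of the exponent via $\delta_{x_1}^{(p^j)}$; when the exponent is $0$ all these digits vanish, so each $\delta_{x_1}^{(p^j)}$ annihilates the boundary and, inductively, $\partial_{x_1}^{(p^j)}=x_1^{-p^j}\delta_{x_1}^{(p^j)}$ lands back in $\overline{E}'$.

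I expect this integrality-from-vanishing-exponent step to be the main obstacle: it is precisely the content of ``no nilpotent residues,'' and the cleanest way to organize it is via the Frobenius/Cartier description of Theorem \ref{thm:cartier}, which reduces the claim level by level to the vanishing of a single residue operator rather than forcing one to manipulate all divided-power operators simultaneously. Once the crux is established for $\overline{E}'$ near each $D_i$, the commuting modifications glue to a global $\mathcal{O}_{\overline{X}}$-coherent $\mathscr{D}_{\overline{X}/k}$-module extending $E$; its automatic local freeness then exhibits it as the desired stratified bundle $\overline{E}$ on $\overline{X}$, completing the argument.
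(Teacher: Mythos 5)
The paper itself gives no proof of this proposition --- it is quoted from \cite[Cor.~5.4]{Kindler/FiniteBundles} --- so your proposal has to be judged on its own merits. Your skeleton (normalize to a logarithmic lattice whose exponents are literally $0\in\Z_p$, show such a lattice is stable under all of $\mathscr{D}_{\overline{X}/k}$, then glue) is the right one, but each pivotal step has a genuine gap. The first is your lifting of the decomposition $\overline{E}|_{D_i}=\bigoplus_\alpha F_\alpha$ to a decomposition of $\overline{E}$ itself ``because the $\delta^{(m)}_{x_1}$ commute'': these operators are not $\mathcal{O}_{\overline{X}}$-linear, so commutativity gives no decomposition into $\mathcal{O}$-submodules at all; what you are invoking is essentially Gieseker's formal decomposition theorem \cite[Thm.~3.3]{Gieseker/FlatBundles}, a substantial result that holds over the complete local field, after which you would still have to descend the twisted lattice from the completion to an honest neighborhood of the generic point of $D_i$. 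Fortunately the strong statement is unnecessary: the composite $\overline{E}\to\overline{E}|_{D_i}\to F_\alpha$ commutes with the logarithmic operators, so its kernel is a log-stable elementary modification of $\overline{E}$, and such a modification shifts the integral exponent $\alpha$ to $\alpha+1$ while leaving the others untouched. Finitely many such modifications make all (integral) exponents equal, and then a \emph{single} twist by a multiple of $\mathcal{O}_{\overline{X}}(-D_i)$ --- no summand-wise twisting, hence no decomposition of the lattice --- produces a lattice with all exponents equal to $0\in\Z_p$.

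The second gap is the crux you yourself flag. Exponent $0$ only gives $\delta^{(m)}_{x_1}\overline{E}'\subset x_1\overline{E}'$ for all $m\geq 1$, whereas $\mathscr{D}$-stability needs $\delta^{(m)}_{x_1}\overline{E}'\subset x_1^{m}\overline{E}'$, and your digit-by-digit induction does not close: since $\delta^{(1)}_{x_1}$ acts on $x_1^{j}\overline{E}'/x_1^{j+1}\overline{E}'$ by the scalar $j$, the $\delta^{(1)}$-equivariance argument that pushes the containment from depth $j$ to depth $j+1$ only yields $j\cdot(\text{obstruction map})=0$, which is empty information whenever $p\mid j$ --- and depths divisible by $p$ are unavoidable on the way to $x_1^{p^{j}}$. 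The route you gesture at (Cartier/Frobenius, level by level) is the one that works, but it must actually be carried out: exponent $0$ forces the residue of the first-order log-connection on $\overline{E}'$ to vanish, so it is a genuine connection without poles; its $p$-curvature vanishes because it vanishes on the dense open $X$ and $\overline{E}'$ is torsion free; Cartier descent then produces a lattice on $\overline{X}^{(1)}$ extending the Frobenius descendant of $E$, and by the identity \eqref{eqn:frobOps} (equivalently Corollary \ref{cor:frobMultipliesExponents}) this is again a log-lattice with exponents $0$, so one can iterate and obtain the tower $(\overline{E}_n,\sigma_n)$ of Theorem \ref{thm:cartier}. Finally, your gluing is also waved through: working near the generic points of the $D_i$ only yields the extension away from a closed subset of codimension $\geq 2$, and you need \cite[Lemma 2.5]{Kindler/FiniteBundles} to cross it, exactly as the paper does in the proof of Theorem \ref{thm:affinespaces}.
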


%
%\begin{itemize}
%	\item Definition of $\Pic^{\Strat}$.
%		\[0\rightarrow \mathbb{I}\rightarrow
%			\Pic^{\Strat(X)}\rightarrow \varprojlim_p
%			\Pic(X)\rightarrow 0\]
%			and
%			\[\mathbb{I}=\prod
%				\mathcal{O}_{X}(X)^\times/\varprojlim_p
%				\mathcal{O}_X(X)^{\times}\]
%\end{itemize}
%
%
\section{Special case I: Stratified line bundles}\label{sec:linebundles}
We continue to denote by $k$ an algebraically closed field of characteristic
$p>0$, and by $X$ a smooth, separated, connected,  finite type $k$-scheme.

We start with a group theoretic definition.
\begin{Definition}
	If $G$ is an abelian group, write $(G)_p$ for the projective system 
	\begin{equation}\label{projSystem} G\xleftarrow{\cdot p}
		G\xleftarrow{\cdot p} \ldots.\end{equation}
	This defines an exact functor from the category of abelian groups into
	the category of pro-systems of abelian groups. We write $\plim
	G:=\varprojlim ((G)_p)$, and $R^1\plim G:=
	R^1\varprojlim((G)_p)$.
	This construction defines a left exact functor $\plim$ from the category
	of abelian groups to itself, and for a short exact sequence
	\[0\rightarrow A\rightarrow B \rightarrow C\rightarrow 0\] of abelian
	groups, we have the long exact squence
	\[0\rightarrow\plim A\rightarrow \plim B\rightarrow
		\plim C\rightarrow R^1\plim A\rightarrow
		R^1\plim B\rightarrow R^1\plim C \rightarrow
		0.\]

\end{Definition}

\begin{Definition}
	We write $\Pic^{\Strat}(X)$ for the set of isomorphism classes of
	stratified bundles of rank $1$. The tensor product of stratified
	bundles gives this set an abelian group structure.
\end{Definition}

By Theorem \ref{thm:cartier} we can associate with every stratified line
bundle a sequence of elements $L_n\in \Pic X$, such that $L_{n+1}^p=L_n$. This
gives a group homomorphism $\Pic^{\Strat}(X)\rightarrow \varprojlim_{p} \Pic
X$. 

\begin{Definition}Denote by $\mathbb{I}(X)$ the subgroup of $\Pic^{\Strat}(X)$
of isomorphism classes of stratified line bundles $(L_n,\sigma_n)$ with
$L_n\cong\mathcal{O}_X$ for all $n$. 
\end{Definition}

We clearly have the following:

\begin{Proposition}\label{prop:SESpicstrat}
	The morphisms described above fit in a functorial short exact sequence
	\[ 
		0\rightarrow \mathbb{I}(X)\rightarrow \Pic^{\Strat}(X)\rightarrow
		\varprojlim_{p}\Pic(X)\rightarrow 0.
		\]
\end{Proposition}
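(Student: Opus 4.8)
The plan is to make explicit the homomorphism sketched just before the statement, and then to check exactness and surjectivity by hand; functoriality will be formal. Write $\pi\colon \Pic^{\Strat}(X)\to \varprojlim_p\Pic(X)$ for the map sending the class of a stratified line bundle $(L_n,\sigma_n)_{n\ge 0}$ to the sequence $([L_n])_{n\ge 0}$. The first thing I would verify is that $\pi$ is well defined and actually lands in the inverse limit. Here the only substantive input is that the absolute Frobenius $F$ acts on $\Pic(X)$ as multiplication by $p$, i.e.\ $F^*L\cong L^{\otimes p}$ for every line bundle $L$; combined with the isomorphism $\sigma_n\colon L_n\xrightarrow{\cong}F^*L_{n+1}$ this gives $[L_n]=[L_{n+1}^{\otimes p}]=p\cdot[L_{n+1}]$ in $\Pic(X)$, which is exactly the compatibility demanded by the transition maps $\cdot p$ of the pro-system $(\Pic(X))_p$. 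Independence of the chosen representative is immediate, since an isomorphism of stratified line bundles restricts to isomorphisms $L_n\cong L'_n$ for all $n$; and $\pi$ is a group homomorphism because, by Theorem~\ref{thm:cartier}, the tensor product of stratified bundles is computed componentwise on the $L_n$.

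Next I would identify the kernel. By construction $\pi$ sends $(L_n,\sigma_n)$ to the trivial element of $\varprojlim_p\Pic(X)$ precisely when $L_n\cong\mathcal{O}_X$ for every $n$, which is the definition of $\mathbb{I}(X)$. Thus $\ker\pi=\mathbb{I}(X)$, giving exactness at $\Pic^{\Strat}(X)$, while injectivity of the inclusion $\mathbb{I}(X)\hookrightarrow\Pic^{\Strat}(X)$ is tautological.

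The one point requiring an actual construction is surjectivity of $\pi$. Given a sequence $([L_n])_{n\ge 0}\in\varprojlim_p\Pic(X)$, the relation $[L_n]=p\cdot[L_{n+1}]$ means $F^*L_{n+1}\cong L_{n+1}^{\otimes p}\cong L_n$, so after fixing actual line bundles representing each class I may choose some $\mathcal{O}_X$-linear isomorphism $\sigma_n\colon L_n\xrightarrow{\cong}F^*L_{n+1}$ for each $n$. By Theorem~\ref{thm:cartier} a stratified line bundle is nothing but such a sequence of pairs $(L_n,\sigma_n)$ --- no further compatibility among the $\sigma_n$ is imposed --- so any choice produces an object of $\Pic^{\Strat}(X)$ mapping to $([L_n])_n$. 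Hence $\pi$ is surjective, and I expect this step, though the only one with content, to be routine precisely because the $\sigma_n$ are unconstrained.

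Finally, functoriality is formal: for $f\colon Y\to X$ the absolute Frobenius satisfies $F_X\circ f=f\circ F_Y$, so $f^*$ carries stratified bundles to stratified bundles and commutes with $\pi$, while on Picard groups $f^*$ commutes with multiplication by $p$ and hence with the transition maps of the pro-systems. Thus $f^*$ induces a morphism of short exact sequences. I do not anticipate a genuine obstacle anywhere; the whole statement rests on the single fact $F^*L\cong L^{\otimes p}$, which simultaneously supplies the $\cdot p$ transition maps and the freedom to construct preimages in the surjectivity step.
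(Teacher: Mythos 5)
Your proof is correct and takes essentially the same approach as the paper: the paper states this proposition without argument (``We clearly have the following''), relying on precisely the observations you make explicit --- the Katz description from Theorem~\ref{thm:cartier}, the fact that $F^*L\cong L^{\otimes p}$ for the absolute Frobenius, the definition of $\mathbb{I}(X)$ as the classes with all $L_n\cong\mathcal{O}_X$, and the freedom to choose the isomorphisms $\sigma_n$ arbitrarily in the surjectivity step. Your write-up is simply a careful verification of what the paper deems clear.
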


Moreover, it is not difficult to describe the group $\mathbb{I}(X)$
concretely: 

\begin{Proposition}\label{prop:descriptionofI}
	There is a functorial exact sequence
	\begin{equation*}
		\xymatrix@C=.5cm{ 
			0\ar[r]&k^\times\ar[r]&H^0(X,\mathcal{O}_X^\times)\ar[r]^--{\Delta}&\varprojlim_n
			H^0(X,\mathcal{O}_X^{\times})/H^0(X,\mathcal{O}_{X}^{\times})^{p^n}\ar[r]&
			\mathbb{I}(X)\ar[r]&0
			 }
		 \end{equation*}
		 where the morphism $k^\times \rightarrow
		 H^0(X,\mathcal{O}_X^\times)$ is the canonical inclusion, and
		 $\Delta$ the diagonal map. 
	 \end{Proposition}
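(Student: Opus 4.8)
The plan is to recognize $\mathbb{I}(X)$ as a $\varprojlim^1$-group and then read off the four-term sequence from the standard six-term exact sequence attached to a short exact sequence of inverse systems. Throughout write $U:=H^0(X,\mathcal{O}_X^\times)$.

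First I would make $\mathbb{I}(X)$ explicit via Katz's description (Theorem \ref{thm:cartier}). An object of $\mathbb{I}(X)$ is a stratified line bundle $(L_n,\sigma_n)_{n\geq 0}$ with every $L_n\cong\mathcal{O}_X$; choosing trivializations, each $\sigma_n\colon L_n\xrightarrow{\cong}F^*L_{n+1}$ becomes an $\mathcal{O}_X$-linear automorphism of $\mathcal{O}_X$, that is, a unit $u_n\in U$, and since $F$ is the absolute Frobenius, $F^*$ sends such a unit $u$ to $u^p$. Tracking the morphism condition of Theorem \ref{thm:cartier} shows that two sequences $(u_n)$ and $(u_n')$ define isomorphic objects exactly when $u_n'=u_n\,\phi_{n+1}^{p}\phi_n^{-1}$ for some $(\phi_n)\in\prod_{n\geq 0}U$. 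Hence $\mathbb{I}(X)=\coker d$ and $\ker d=\plim U$, where $d\colon\prod_{n\geq 0}U\to\prod_{n\geq 0}U$ is $(\phi_n)\mapsto(\phi_{n+1}^{p}\phi_n^{-1})$. As $d$ coincides up to inversion with the standard boundary map computing $\varprojlim$ and $\varprojlim^1$ of the system $(U)_p=(U\xleftarrow{\cdot p}U\xleftarrow{\cdot p}\cdots)$, this identifies $\ker d=\plim U$ and $\mathbb{I}(X)\cong R^1\plim U$.

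Next I would produce the sequence from a short exact sequence of $\mathbb{N}$-indexed inverse systems. The $p^n$-power map $U\xrightarrow{\,\cdot p^{n}\,}U$ (i.e.\ $u\mapsto u^{p^n}$) is injective: if $u^{p^n}=1$ then $u\in k^\times$ because $U/k^\times$ is torsion free, and then $u=1$ since the Frobenius is injective on $k^\times$ in characteristic $p$. Thus for each $n$ the sequence $0\to U\xrightarrow{\,\cdot p^{n}\,}U\to U/U^{p^n}\to 0$ is exact, and these assemble into
\[
0\longrightarrow (U)_p\longrightarrow \underline{U}\longrightarrow (U/U^{p^n})_n\longrightarrow 0,
\]
where $\underline{U}$ is the constant system and $(U/U^{p^n})_n$ carries the natural surjections. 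Applying the six-term $\varprojlim$--$\varprojlim^1$ exact sequence, and using that $\varprojlim^1$ vanishes for the constant system $\underline{U}$ and for the surjective (Mittag--Leffler) system $(U/U^{p^n})_n$, I obtain
\[
0\longrightarrow \plim U\longrightarrow U\xrightarrow{\ \Delta\ }\varprojlim_n U/U^{p^n}\longrightarrow R^1\plim U\longrightarrow 0,
\]
where the middle map is the diagonal $\Delta$ induced by $\underline{U}\to(U/U^{p^n})_n$, and the last term is $\mathbb{I}(X)$ by the previous step.

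Finally I would identify $\plim U=\ker\Delta=\bigcap_n U^{p^n}$ with $k^\times$. Since $X$ is connected over the algebraically closed field $k$, the group $U/k^\times$ is finitely generated and torsion free, so $\bigcap_n p^n(U/k^\times)=0$; as $k^\times$ is $p$-divisible this gives $\bigcap_n U^{p^n}=k^\times$, and the inclusion $\plim U\hookrightarrow U$ is the canonical inclusion of constants. Functoriality in $X$ is automatic, as Katz's equivalence, the short exact sequence of systems, and the six-term sequence are all natural in $U=H^0(X,\mathcal{O}_X^\times)$. The step demanding the most care, and the main potential obstacle, is the first: confirming that the cocycle bookkeeping of Theorem \ref{thm:cartier} genuinely yields the coboundary map $d$ (so that $\coker d= R^1\plim U$), including the facts that the absolute Frobenius acts on units by $p$-th powers and that the $p^n$-power map on $U$ is injective.
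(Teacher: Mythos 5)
Your proof is correct, but it takes a genuinely different route from the paper's. Writing $U=H^0(X,\mathcal{O}_X^\times)$, the paper never identifies $\mathbb{I}(X)$ with a derived inverse limit: it constructs an explicit map $\Psi\colon \varprojlim_n U/U^{p^n}\to\mathbb{I}(X)$ by lifting a compatible sequence $(\alpha_n)$ to units $(a_n)$ and taking the stratified bundle with transition isomorphisms $\sigma_0=a_1$ and $\sigma_n=(a_{n+1}a_n^{-1})^{1/p^n}$ for $n>0$; well-definedness, surjectivity, and the identification of $\ker\Psi$ with the image of $\Delta$ are then verified by direct cocycle manipulation, and exactness at $U$ is handled separately. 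You instead prove the intermediate statement $\mathbb{I}(X)\cong R^1\plim U$ (via Theorem \ref{thm:cartier} and the coboundary presentation of $\varprojlim^1$) and then read the whole four-term sequence off the six-term $\varprojlim$--$\varprojlim^1$ sequence attached to $0\to (U)_p\to\underline{U}\to (U/U^{p^n})_n\to 0$, with Mittag--Leffler vanishing killing the two outer $\varprojlim^1$ terms; the paper's $\Psi$ is, up to sign and indexing conventions, exactly your connecting homomorphism. Your version is more structural: it reuses machinery the paper sets up anyway (the definitions of $\plim$ and $R^1\plim$ and the six-term sequence, which the paper deploys only in Lemma \ref{lemma:groupTheory}) and makes functoriality immediate; the cost is that you must check injectivity of the $p^n$-power maps on $U$ to have a short exact sequence of systems, a step the paper's construction does not need. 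The paper's version is self-contained and yields the isomorphism by an explicit formula. Both arguments ultimately rest on the same nontrivial input, namely that $U/k^\times$ is finitely generated and torsion-free: you assert this, whereas the paper justifies it via the exact sequence relating $U$, the free group on the boundary divisors, and $\Cl$ of a normal compactification of $X$; you should import that justification (or cite the standard unit theorem) so that your injectivity step and the identification $\bigcap_n U^{p^n}=k^\times$ are complete.
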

\begin{proof}
	Since the base field $k$ is algebraically closed of characteristic
	$p>0$, the sequence is exact at
	$H^0(X,\mathcal{O}_X^\times)$. Indeed,
	$H^0(X,\mathcal{O}_X^\times)/k^\times$ is a finitely generated free abelian
	group, because of the exact sequence
        \begin{equation*}
        	\xymatrix@C=.5cm{ 
        		0\ar[r]&
			H^0(\overline{X},\mathcal{O}_{\overline{X}}^\times)\ar[r]&H^0(X,\mathcal{O}_X^\times)\ar[r]&\bigoplus_i
			D_i
			\mathbb{\Z}\ar[r]&\Cl(\overline{X})\ar[r]&\Cl(X)\ar[r]&0,
       		 }
        	 \end{equation*}
		 where $\overline{X}$ is any normal compactification of $X$, and
		 $D_i$ the irreducible components of $\overline{X}\setminus
		 X$. This shows that the kernel of $\Delta$ is precisely
		 $k^\times$.
		 
		 Thus we only need to construct the morphism
	\[\phi:\coker(\Delta)\rightarrow
		\mathbb{I}(X),\]
		and show that it is a natural isomorphism. 

		Given an element $\alpha:=(\alpha_1,\alpha_2,\ldots)\in \varprojlim_n
		H^0(X,\mathcal{O}_X^\times)/H^0(X,\mathcal{O}_X^\times)^{p^n}$,
		take any sequence of lifts $a:=(a_1,a_2,\ldots) \in
		H^0(X,\mathcal{O}_X^\times)^{\N}$, and define the stratified bundle
		$\Psi(a):=(\Psi_n(a),\sigma^a_n)_{n\geq 0}$ by setting 
		$\Psi(a)_n=\mathcal{O}_X$, and by defining the transition isomorphisms
		$\sigma_n^a:\Psi(a)_n\rightarrow F^*\Psi(a)_{n+1}$ as follows:
		$\sigma_0^a$
		is multiplication by $a_1$, and
		$\sigma^a_n=(a_{n+1}a_n^{-1})^{1/p^n}$, $n>0$. This works, since
		by definition $a_{n+1}\equiv a_n \mod p^{n}$. We have to check
		that this construction gives a well-defined map $\Psi$: If we pick a different sequence of lifts
		$a':=(a_1',a_2',\ldots)$ of
		$\alpha=(\alpha_1,\alpha_2,\ldots)$, then the resulting stratified line
		bundle $\Psi(a')$ is isomorphic to $\Psi(a)$. Indeed, if
		$b_n^{p^n}=a'_na_n^{-1}$, then the sequence of isomorphisms
		$\phi_0=\id$, 
		$\phi_n:\Psi(a)_n\xrightarrow{\cdot b_n}\Psi(a')_n$, $n>1$,  defines an
		isomorphism of stratified bundles. We write $\Psi(\alpha)$ for
		the isomorphism class of the stratified bundle $\Psi(a)$

		It is readily checked that this map is surjective: If the stratified bundle
		$L=(L_n,\sigma_n)_{n\geq 0}$ is given by $L_n=\mathcal{O}_X$
		and the transition morphisms $\sigma_n$,
		then $\sigma_n\in H^0(X,\mathcal{O}^{\times}_X)$, and
		$L=\Psi(
		(\sigma_0,\sigma_0\sigma_1^p,\sigma_0\sigma_1^p\sigma_2^{p^2},\ldots) )$.

		To compute the kernel, note that a stratification $L$ on
		$\mathcal{O}_X$ which is given by transition morphisms
		$\sigma_n$, is trivial if and only if there exists a sequence
		$\phi_0,\phi_1,\ldots \in H^0(X,\mathcal{O}_X^\times)$ such
		that $\sigma_n\phi_{n+1}^p=\phi_n$, $n>0$. By recursion, this
		means that 
		\[\phi_0=\sigma_0\sigma_1^p\cdot\ldots\cdot
			\sigma_n^{p^n}\phi_{n+1}^{p^{n+1}}\]
			for every $n$. In other words: $L=\Psi(\Delta(\phi_0))$,
			so
			the kernel of $\Psi$ is precisely
			$H^0(X,\mathcal{O}_X^\times)$. 
			This completes the proof.

%		$\sigma^{a}_n:\Psi(a)_n\xrightarrow{\cong}F^*\Psi(a)_{n+1}$
%		as multiplication by $a_{n+1}$\fixme{that's not correct!}. 
%		The assignment $\Psi$ gives a well-defined
%		map
%		\[\Psi:\varprojlim_n
%			H^0(X,\mathcal{O}_X^\times)/H^0(X,\mathcal{O}_X^\times)^{p^n}\rightarrow
%			\mathbb{I}(X),
%			\]
%		because if we change the lifts, say to
%	$a'_n:=a_nb_n^{p^n}$, then the stratified bundles
%		$\Psi(a)$ and $\Psi(a')$ are isomorphic: Just take
%		$\phi_n:\Psi(a)_nrightarrow \Psi(a')_n$ for $n\geq 0$ to be
%		multiplication by $\prod_{r=1}^{n} b_i^{p^i}$. 
%
%		It is not difficult to see that the map $\Psi$ is surjective:
%		If $(L_n,\sigma_n)$ is a stratified bundle with
%		$L=L_n=\mathcal{O}_X$ for all $n\geq 0$, and if we consider
%		$\sigma_n$ to be elements of $H^0(X,\mathcal{O}_X^\times)$,
%		then $L\cong \Psi(
%		(\sigma_0,\sigma_0\sigma_1^p,\sigma_0\sigma_1^p\sigma_2^{p_2},\ldots)$.
%
%		Finally, lets compute the kernel of $\Psi$: 
%	
	\end{proof}

%Giving a stratification on $\mathcal{O}_X$ is equivalent to giving
%a sequence $(a_0,a_1,\ldots)\in H^0(X,\mathcal{O}_X^\times)^{\N}$ of global units, and a stratification on
%$\mathcal{O}_X$ is trivial, if and only if it can be written as
%\[(\frac{b_0}{b_1^p}, \frac{b_1}{b_2^p},\ldots)\in
%	H^0(X,\mathcal{O}_X^\times)^{\N}.\]

	\begin{Remark}\label{remark:completion}
	\begin{enumerate}[label={(\alph*)}]
		\item The description of $\mathbb{I}(X)$ in Proposition
			\ref{prop:descriptionofI} was inspired by a similar
			description for stratified bundles on $k\llparen
			t\rrparen$ in 
			\cite{Matzat}. 
		\item\label{remark:completion1} Note that the abelian group $\varprojlim_n
			H^0(X,\mathcal{O}_X^\times)/H^0(X,\mathcal{O}_X^\times)^{p^n}$
			is just the $p$-adic completion of the free finitely
			generated abelian group
			$H^0(X,\mathcal{O}_X^\times)/k^\times$, and the map
			$\Delta$ induces the canonical map from
			$H^0(X,\mathcal{O}_X^\times)/k^\times$ into its
			$p$-adic completion.
	\end{enumerate}
		\end{Remark}
	\begin{Corollary}\label{cor:pdivisible}
		The group $\mathbb{I}(X)$ is trivial if and only if
		$H^0(X,\mathcal{O}_X^\times)=k^\times$.
	\end{Corollary}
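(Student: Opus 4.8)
The plan is to read off $\mathbb{I}(X)$ directly as the cokernel of $\Delta$ from the four-term exact sequence of Proposition \ref{prop:descriptionofI}, and then to compute that cokernel explicitly using the structural identification provided by Remark \ref{remark:completion}\ref{remark:completion1}. The point is that all the real work has already been done in establishing the exact sequence, so the corollary should follow by an elementary computation with finitely generated free abelian groups.

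First I would set $G:=H^0(X,\mathcal{O}_X^\times)/k^\times$, which the proof of Proposition \ref{prop:descriptionofI} shows (via the displayed exact sequence involving $\Cl(\overline{X})$ and $\Cl(X)$) is a \emph{finitely generated free} abelian group; write $r:=\rank_\Z G$. The exact sequence of Proposition \ref{prop:descriptionofI} identifies $\mathbb{I}(X)$ with $\coker(\Delta)$, and by Remark \ref{remark:completion}\ref{remark:completion1} the target of $\Delta$ is the $p$-adic completion $\widehat{G}$ of $G$, with $\Delta$ inducing the canonical completion map $G\to\widehat{G}$. Hence $\mathbb{I}(X)\cong\coker\bigl(G\to\widehat{G}\bigr)$.

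Next I would choose an isomorphism $G\cong\Z^r$. The $p$-adic completion is then $\widehat{G}\cong\Z_p^r$, and the canonical map $G\to\widehat{G}$ becomes the standard diagonal inclusion $\Z^r\hookrightarrow\Z_p^r$. Therefore
\[
\mathbb{I}(X)\;\cong\;(\Z_p/\Z)^{\,r}.
\]
From this both implications are immediate: if $H^0(X,\mathcal{O}_X^\times)=k^\times$ then $r=0$, so $\widehat{G}=0$ and $\mathbb{I}(X)=0$; conversely, if $H^0(X,\mathcal{O}_X^\times)\neq k^\times$ then $r\geq 1$.

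The only genuine point to verify — and it is a minor one — is that $\Z_p/\Z\neq 0$, equivalently that $\Z$ is not $p$-adically complete. This is clear, for instance because $\Z_p$ is uncountable whereas the image of $\Z$ is countable (or, concretely, $\sum_{n\geq 0}p^n\in\Z_p$ is not an integer). Consequently $(\Z_p/\Z)^r\neq 0$ for $r\geq 1$, giving $\mathbb{I}(X)\neq 0$. Thus there is no substantial obstacle: the content is entirely carried by Proposition \ref{prop:descriptionofI} and Remark \ref{remark:completion}\ref{remark:completion1}, and the corollary reduces to the triviality that $\Z\subsetneq\Z_p$.
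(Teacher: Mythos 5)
Your proof is correct and takes essentially the same route as the paper: both identify $\mathbb{I}(X)$, via Proposition \ref{prop:descriptionofI} and Remark \ref{remark:completion}, with the cokernel of the canonical map from the finitely generated free group $H^0(X,\mathcal{O}_X^\times)/k^\times\cong\Z^r$ into its $p$-adic completion $\Z_p^r$, which is surjective precisely when $r=0$. The only difference is cosmetic: the paper asserts this last fact directly, while you make it explicit by computing the cokernel as $(\Z_p/\Z)^r$ and noting $\Z\subsetneq\Z_p$.
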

	\begin{proof}
		This follows from Proposition \ref{prop:descriptionofI} and Remark \ref{remark:completion},
		\ref{remark:completion1}, because the map from $\Z^r$ into its
		$p$-adic completion is surjective if and only if $r=0$.
	\end{proof}
%\begin{proof}
%	Given a sequence $(a_0,a_1,\ldots)$, we can define
%	\[b_n:=\prod_{0\leq i<n} a_i^{-\frac{1}{p^{n-i}}},\]
%	to obtain $a_n=\frac{b_n}{b_{n+1}^p}$.
%\end{proof}

Corollary \ref{cor:pdivisible} allows us to exhibit a class of examples of
varieties $X$ such that $\mathbb{I}(X)$ is trivial:

\begin{Proposition}\label{prop:globalunits}
	If $k$ is an algebraically closed field, and if
	$X$ is a connected normal $k$-scheme of finite type, such that the maximal
	abelian pro-$\ell$-quotient
	$\pi_1(X)^{\ab,(\ell)}$ is trivial for some $\ell\neq \Char(k)$, then
	$H^0(X,\mathcal{O}_X^\times) = k^\times$. In particular, if $k$ has
	positive characteristic, then $\mathbb{I}(X)=0$. 
\end{Proposition}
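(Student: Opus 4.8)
The plan is to reduce the ``in particular'' clause immediately to the main assertion via Corollary \ref{cor:pdivisible}, which equates $\mathbb{I}(X)=0$ with $H^0(X,\mathcal{O}_X^\times)=k^\times$, and then to prove that $\pi_1(X)^{\ab,(\ell)}=0$ forces every global unit to be constant. The engine is Kummer theory: since $\ell\neq\Char(k)$, the sheaf $\mu_\ell$ is étale-locally constant and there is a short exact sequence of étale sheaves $0\to\mu_\ell\to\G_m\xrightarrow{\ \cdot\ell\ }\G_m\to 0$ on $X$. Taking étale cohomology yields an injection of the cokernel of multiplication by $\ell$ on units into the first cohomology of $\mu_\ell$, that is
\[
H^0(X,\mathcal{O}_X^\times)/\bigl(H^0(X,\mathcal{O}_X^\times)\bigr)^{\ell}\hookrightarrow H^1_{\et}(X,\mu_\ell).
\]

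First I would dispose of the right-hand side. Because $k$ is algebraically closed of characteristic different from $\ell$, a choice of primitive $\ell$-th root of unity identifies $\mu_\ell$ with the constant sheaf $\Z/\ell\Z$, so $H^1_{\et}(X,\mu_\ell)\cong H^1_{\et}(X,\Z/\ell\Z)$. Since $X$ is connected, this cohomology group classifies (after choosing a base point) the continuous homomorphisms $\pi_1^{\et}(X)\to\Z/\ell\Z$; as $\Z/\ell\Z$ is abelian of $\ell$-power order, every such homomorphism factors through the maximal abelian pro-$\ell$-quotient $\pi_1(X)^{\ab,(\ell)}$. By hypothesis this quotient vanishes, so $H^1_{\et}(X,\mu_\ell)=0$, and the displayed injection shows that $H^0(X,\mathcal{O}_X^\times)$ is $\ell$-divisible, i.e.\ every global unit on $X$ is an $\ell$-th power of a global unit.

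To finish I would invoke the structure of the unit group. For $X$ connected and normal of finite type over the algebraically closed field $k$, the group $H^0(X,\mathcal{O}_X^\times)/k^\times$ is a finitely generated free abelian group, as recorded by the divisor exact sequence already used in the proof of Proposition \ref{prop:descriptionofI} (compute units on a normal compactification $\overline{X}$, using that $H^0(\overline{X},\mathcal{O}_{\overline{X}}^\times)=k^\times$). Writing $H^0(X,\mathcal{O}_X^\times)/k^\times\cong\Z^r$, this is a quotient of the $\ell$-divisible group $H^0(X,\mathcal{O}_X^\times)$ and hence is itself $\ell$-divisible; but a finitely generated free abelian group $\Z^r$ is $\ell$-divisible only when $r=0$. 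Therefore $H^0(X,\mathcal{O}_X^\times)=k^\times$, as desired, and the positive-characteristic consequence follows from Corollary \ref{cor:pdivisible}.

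I expect the only genuinely delicate point to be the identification $H^1_{\et}(X,\mu_\ell)\cong\Hom(\pi_1(X)^{\ab,(\ell)},\Z/\ell\Z)$: one must make sure that the hypothesis $\pi_1(X)^{\ab,(\ell)}=0$ (with $X$ merely normal rather than smooth) legitimately kills all $\Z/\ell\Z$-torsors, for which connectedness and the finiteness of the coefficient group are exactly what is needed. Everything else---the Kummer sequence, the $\ell$-divisibility of $k^\times$, and the finite generation of the units modulo constants---is standard and purely formal, so this cohomological translation is where I would concentrate the care.
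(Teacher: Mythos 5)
Your proof is correct, but it follows a genuinely different route from the paper's. The paper argues anabelian-style (the argument is attributed to H\'el\`ene Esnault): a unit $f\notin k^\times$ is transcendental over $k$, hence defines a \emph{dominant} morphism $X\rightarrow\G_{m,k}$; by a lemma of Stix, a dominant morphism induces an open homomorphism $\pi_1(X)\rightarrow\pi_1(\G_{m,k})$, so the image would be an open, hence finite-index, subgroup; but that image is a continuous quotient of $\pi_1(X)$, so its maximal abelian pro-$\ell$-quotient vanishes, while every open subgroup of $\pi_1(\G_{m,k})$ maps onto an open subgroup of $\pi_1(\G_{m,k})^{(\ell)}\cong\Z_\ell$ and therefore has nontrivial such quotient --- a contradiction. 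You instead combine the Kummer sequence, the identification $H^1_{\et}(X,\mu_\ell)\cong\Hom(\pi_1(X)^{\ab,(\ell)},\Z/\ell\Z)=0$ (valid for any connected scheme; normality is indeed irrelevant at this step, as you note), and the finite generation of $H^0(X,\mathcal{O}_X^\times)/k^\times$. The trade-off: your argument leans on the Rosenlicht-type finiteness of units modulo constants, obtained via a normal compactification --- exactly the input recorded in the paper's proof of Proposition \ref{prop:descriptionofI} --- which the paper's proof of Proposition \ref{prop:globalunits} itself does not need; in exchange you replace Stix's openness lemma for fundamental groups by completely standard \'etale cohomology, and you obtain slightly more, namely the $\ell$-divisibility of the whole unit group via the injection of $H^0(X,\mathcal{O}_X^\times)/\bigl(H^0(X,\mathcal{O}_X^\times)\bigr)^\ell$ into $H^1_{\et}(X,\mu_\ell)$. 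Your reduction of the ``in particular'' clause to Corollary \ref{cor:pdivisible} is exactly what the paper intends.
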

\begin{proof} This argument is due to H\'el\`ene Esnault.
	Assume $f\in H^0(X,\mathcal{O}_X^\times)\setminus k^\times$. Then $f$ induces a
	dominant morphism $f':X\rightarrow \mathbb{G}_{m,k}\cong
	\mathbb{A}^1_k\setminus\left\{ 0 \right\}$, as $f'$ is given by the map $k[x^{\pm 1}]\rightarrow H^0(X,\mathcal{O}_X)$,
	$x\mapsto f$, which is injective if and only if $f$ is transcendental over $k$.
	Thus $f'$ induces an \emph{open} morphism $\pi_1(X)\rightarrow
	\pi_1(\mathbb{G}_{m,k})$, see e.g. \cite[Lemma 4.2.10]{Stix/dissertation}. 
	But under our assumption, the maximal abelian pro-$\ell$-quotient of the
	image of this morphism is trivial, so the image of $\pi_1(X)$ cannot have finite index in the
	group $\pi_1(\mathbb{G}_{m,k})$, as in fact
	$\pi_1(\mathbb{G}_{m,k})^{(\ell)}\cong
	\widehat{\Z}^{(\ell)}=
	\Z_\ell$.
        %
	%For the second assertion, if $f\in H^0(X,\mathcal{O}_X)\setminus k$, then by the\
	%same arguments as above, $f$ induces a dominant morphism $X\rightarrow
	%\mathbb{A}_k^1$, and hence an open map $\pi_1(X)\rightarrow\pi_1(\mathbb{A}^1_k)$.
	%For $k$ of positive characteristic it is known that $\pi_1(\mathbb{A}^1_k)$ has an
	%infinite maximal pro-$p$-quotient (in fact it is a free pro-$p$-group of infinite rank: by
	%\cite[1.4.3, 1.4.4]{Katz/LocalToGlobal} we have $H^2(\pi_1(X),\F_p)=0$, so $\pi_1(X)^{(p)}$
	%is free pro-$p$ of rank $\dim_{\F_p}H^1(\A_k^1,\F_p)=\#k$). Thus the image of $\pi_1(X)$ in this group can only have finite index, if
	%$\pi_1(X)^{(p)}\neq 1$.
\end{proof}
\begin{Remark}\label{rem:affinevars}
	We can modify the argument of Proposition \ref{prop:globalunits}
	slightly to obtain: 
	If $k$ has positive characteristic $p$, and
	$\pi_1(X)^{\ab,(p)}=0$, then $H^0(X,\mathcal{O}_X)=k$.
	A consequence is a proof of the folklore fact that over a
	field $k$ of positive characteristic, unlike in characteristic $0$, no
	normal, connected, affine, finite type $k$-scheme $X$ of positive dimension is simply-connected, or even
	has $\pi_1(X)^{\ab,(p)}=0$.

	Indeed, if $f\in H^0(X,{\mathcal{O}_X})$ is nonconstant, then $f$
	induces a dominant morphism $X\rightarrow \A^1_k$ and hence an open
	morphism $\pi_1^{\ab,(p)}(X)\rightarrow \pi_1^{\ab,(p)}(\A^1_k)$. But
	by
	\cite[1.4.3, 1.4.4]{Katz/LocalToGlobal} we have
	$H^2(\pi_1(\A^1_k),\F_p)=0$, so $\pi_1(\A^1_k)^{(p)}$
	is free pro-$p$ of rank $\dim_{\F_p}H^1(\A_k^1,\F_p)=\#k$. Thus the
	image of $\pi^{\ab,(p)}_1(X)$ in this group can only have finite index if
	$\pi_1(X)^{\ab,(p)}\neq 0$.

\end{Remark}

Recall that a \emph{good compactification} of a smooth $k$-scheme $X$ is a
proper, smooth, finite type $k$-scheme $\overline{X}$, with a dominant open
immersion $X\hookrightarrow \overline{X}$, such that $\overline{X}\setminus X$
is the support of a strict normal crossings divisor.

The main goal of this section is to prove the following theorem:
\begin{Theorem}\label{thm:rank1}Let $X,Y$ be smooth, separated, finite type
	$k$-schemes, and
	let $f:Y\rightarrow X$ be a morphism such that the following conditions are
	satisified:
	\begin{enumerate}
		\item There exist good compactifications $\overline{X}$ and
			$\overline{Y}$ of $X$ and $Y$, such that $f$ extends to a morphism
			$\bar{f}:\overline{Y}\rightarrow \overline{X}$.
		\item\label{prop:projlim:assumption2}  $f$ induces an isomorphism
	\begin{equation}
		\label{eq:iso1}f_*:\pi_1^{\ab,(p')}(Y)\rightarrow
		\pi_1^{\ab,(p')}\pi_1(X),
	\end{equation} 
	\item\label{prop:projlim:assumption3}  The cokernel of the morphism
	\begin{equation}\label{eq:iso2}
		H^0(X,\mathcal{O}_X^\times)\rightarrow H^0(Y,\mathcal{O}_{Y}^\times).
	\end{equation} 
	induced by $f$ is a $p$-group.
	\end{enumerate}
			Then pull-back along $f$ induces an isomorphism
			\[\Pic^{\Strat}(X)\xrightarrow{\cong} \Pic^{\Strat}(Y).\]
\end{Theorem}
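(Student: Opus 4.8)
The plan is to use the functorial short exact sequence of Proposition~\ref{prop:SESpicstrat},
\[0\to\mathbb{I}(X)\to\Pic^{\Strat}(X)\to\plim\Pic(X)\to 0,\]
together with its counterpart for $Y$ and the five lemma. Thus it suffices to prove that $f^*$ induces isomorphisms $\mathbb{I}(X)\xrightarrow{\cong}\mathbb{I}(Y)$ and $\plim\Pic(X)\xrightarrow{\cong}\plim\Pic(Y)$. The first is a formal consequence of the units hypothesis \ref{prop:projlim:assumption3}; the second is the substantial part, where hypothesis \ref{prop:projlim:assumption2} on $\pi_1^{\ab,(p')}$ and the compactifications enter.

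For the factor $\mathbb{I}$, set $M_X:=H^0(X,\mathcal{O}_X^\times)/k^\times$ and likewise $M_Y$; by the exact sequence in the proof of Proposition~\ref{prop:descriptionofI} these are free of finite rank, and Remark~\ref{remark:completion}, \ref{remark:completion1} identifies $\mathbb{I}(X)=\coker(M_X\to M_X\otimes_{\Z}\Z_p)\cong M_X\otimes_{\Z}(\Z_p/\Z)$, naturally in $X$. The map $M_X\to M_Y$ is injective: this follows from \ref{prop:projlim:assumption2} by a variant of the argument proving Proposition~\ref{prop:globalunits}, since a nonconstant unit on $X$ whose pull-back were constant would force the nontrivial composite $\pi_1^{\ab,(p')}(Y)\cong\pi_1^{\ab,(p')}(X)\to\pi_1^{\ab,(p')}(\mathbb{G}_m)$ to vanish. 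By \ref{prop:projlim:assumption3} and the snake lemma applied to the two rows $0\to k^\times\to H^0((-),\mathcal{O}^\times)\to M_{(-)}\to 0$, the cokernel $Q$ of $M_X\to M_Y$ is a finite $p$-group. Tensoring $0\to M_X\to M_Y\to Q\to 0$ with $\Z_p/\Z$ and using freeness of $M_Y$ yields a four-term exact sequence whose outer terms are $\operatorname{Tor}_1^{\Z}(Q,\Z_p/\Z)$ and $Q\otimes_{\Z}(\Z_p/\Z)$; both vanish, because $\Z\to\Z_p$ induces an isomorphism on the $\Z_p$-module $Q$. Hence $\mathbb{I}(X)\to\mathbb{I}(Y)$ is an isomorphism.

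For the factor $\plim\Pic$ I would exploit that $\plim$ and $R^1\plim$ are the derived functors $\Ext^i_{\Z}(\Z[1/p],-)$: they annihilate finite $p$-groups, kill free groups in degree $0$, satisfy $R^1\plim D=0$ for divisible $D$, and for finitely generated $F$ give $R^1\plim F\cong F\otimes_{\Z}(\Z_p/\Z)$ while $\plim F$ is the prime-to-$p$ torsion subgroup of $F$. Using the good compactifications from hypothesis (a), write the two compatible exact sequences
\[0\to C_X\to\Pic(\overline{X})\to\Pic(X)\to 0,\qquad 0\to\Pic^0(\overline{X})\to\Pic(\overline{X})\to\NS(\overline{X})\to 0,\]
where $C_X$ is the finitely generated subgroup generated by the components of $\overline{X}\setminus X$, and similarly for $Y$; one checks $\overline{f}^*C_X\subseteq C_Y$. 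The decisive point is that the isomorphism \ref{prop:projlim:assumption2} forces $\overline{f}^*\colon\Pic^0(\overline{X})\to\Pic^0(\overline{Y})$ to be an isogeny whose kernel is a finite $p$-group, and to induce an isomorphism on the prime-to-$p$ torsion of $\NS$. Indeed, for a proper smooth variety $\pi_1^{\ab,(p')}$ is built from the prime-to-$p$ Tate modules $T_\ell\Alb$ ($\ell\neq p$) and the prime-to-$p$ torsion of $\NS$, so an isomorphism on $\pi_1^{\ab,(p')}$ is an isomorphism on every $T_\ell\Alb$, forcing $\overline{f}_*$ (and dually $\overline{f}^*$) to be an isogeny of $p$-power degree. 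Passing to $k$-points, the finite $p$-group kernel and cokernel are annihilated by both $\plim$ and $R^1\plim$, whence $\plim\Pic^0(\overline{X})\xrightarrow{\cong}\plim\Pic^0(\overline{Y})$.

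Assembling these inputs in the $\plim$-long exact sequences of the two displayed sequences and comparing via $\overline{f}^*$, the proper case ($X=\overline{X}$) is immediate: $\plim\Pic(\overline{X})$ is then an extension of the finite prime-to-$p$ torsion of $\NS(\overline{X})$ by $\plim\Pic^0(\overline{X})$, both matched by the above. The main obstacle I expect is the non-proper case, where the boundary group $C_X$ and the free part of $\NS(\overline{X})$ contribute $R^1\plim$-terms of the form $(-)\otimes_{\Z}(\Z_p/\Z)$ that must be shown to match under $\overline{f}^*$. Here I would combine hypothesis \ref{prop:projlim:assumption3}, which controls $C_X$ since divisors of units are supported on the boundary, with the fact that $\pi_1^{\ab,(p')}(X)$ of the open variety also records the toric part of the generalized (semi-abelian) Albanese coming from $\overline{X}\setminus X$. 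Reconciling these two contributions, i.e.\ the bookkeeping of the $R^1\plim$ error terms followed by a final five-lemma argument on the interlocking long exact sequences, is the technical heart of the proof.
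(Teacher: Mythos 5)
Your overall skeleton (the functorial sequence of Proposition \ref{prop:SESpicstrat} plus the five lemma) and your treatment of the factor $\mathbb{I}$ are sound, and the latter is essentially the paper's own argument: injectivity of $M_X\to M_Y$ via the $\G_m$-argument of Remark \ref{rem:globalunits}, \ref{rem:injectivity}, the finite $p$-group cokernel $Q$, and your vanishing of $Q\otimes_{\Z}(\Z_p/\Z)$ and $\operatorname{Tor}_1^{\Z}(Q,\Z_p/\Z)$ is a legitimate substitute for the paper's completion/long-exact-sequence step. The genuine gap is in the $\plim\Pic$ part, and it occurs earlier than the place you flag as the ``technical heart''. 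Your ``decisive point'' asserts that hypothesis \ref{prop:projlim:assumption2} forces $\bar f^*\colon\Pic^0(\overline{X})\to\Pic^0(\overline{Y})$ to be an isogeny with finite $p$-group kernel, ``since an isomorphism on $\pi_1^{\ab,(p')}$ is an isomorphism on every $T_\ell\Alb$''. But hypothesis \ref{prop:projlim:assumption2} is an isomorphism on $\pi_1^{\ab,(p')}$ of the \emph{open} varieties $X,Y$; it gives you no isomorphism $\pi_1^{\ab,(p')}(\overline{Y})\to\pi_1^{\ab,(p')}(\overline{X})$. Bridging open and proper is precisely the content of the paper's Lemma \ref{lemma:pi1compactification}, which requires the semi-abelian Albanese, Chevalley's structure theorem, and the $\NS$/Tate-module exact sequence, and which only yields ``surjective with finite kernel'' --- not injectivity, and with no control on the order of the kernel. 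So your stronger claim ($p$-power-degree isogeny) is unjustified as stated, and the argument you build on it collapses to an assertion.

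The bookkeeping you then postpone --- matching the boundary groups $C_X,C_Y$ and the free parts of $\NS(\overline{X}),\NS(\overline{Y})$ through interlocking $R^1\plim$-sequences --- is exactly what the paper's proof is designed to avoid, and you offer no argument for it. The paper's Proposition \ref{prop:projlim} instead transports the decomposition down to the open varieties: it sets $\Pic^0(X):=\mathrm{im}\bigl(\Pic^0(\overline{X})\to\Pic(X)\bigr)$ (still divisible) and $\NS(X):=\Pic(X)/\Pic^0(X)$ (still finitely generated), so that there is a \emph{single} exact sequence $0\to\Pic^0(X)\to\Pic(X)\to\NS(X)\to 0$ to compare with its analogue for $Y$, and then applies the purely group-theoretic Lemma \ref{lemma:groupTheory}, whose hypotheses are weaker than what you try to establish: (a) $\Pic^0(X)\to\Pic^0(Y)$ surjective with finite kernel, (b) no prime-to-$p$ torsion in $\ker(\Pic(X)\to\Pic(Y))$, (c) surjectivity on prime-to-$p$ torsion of $\NS$. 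These are verified by Kummer theory on the open varieties --- this is where hypothesis \ref{prop:projlim:assumption3} actually enters the Picard-group comparison, giving $\Pic(X)[n]\cong\Pic(Y)[n]$ for $n$ prime to $p$ --- together with Lemma \ref{lemma:pi1compactification} and a Gysin-sequence argument controlling $\coker\bigl(K_X\cap\Pic^0(\overline{X})\to K_Y\cap\Pic^0(\overline{Y})\bigr)$. Without either this reduction or a completed version of your $R^1\plim$ bookkeeping, your proof of $\plim\Pic(X)\cong\plim\Pic(Y)$ is incomplete.
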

\begin{Remark}\label{rem:globalunits}
	\begin{enumerate}[label={(\alph*)}]
		\item The morphism
			\eqref{eq:iso2} is an isomorphism in the
			following two situations:
			\begin{enumerate}[label=(\roman*)]
				\item $X,Y$ proper over $k$.
				\item $\pi_1^{\ab,(p')}(Y)=\pi_1^{\ab,(p')}(X)=1$
					(e.g.~\eqref{eq:iso1} is an
					isomorphism and $X=\Spec(k)$).
					See Proposition
					\ref{prop:globalunits}.
%				\item \eqref{eq:iso1} is an isomorphism, and $X=\Spec k$, $Y$ arbitrary. (See
%					Proposition \ref{prop:globalunits}).
			\end{enumerate}
			Thus in these two cases, the theorem establishes that $f$
			induces an isomorphism of abelian groups
			$\Pic^{\Strat}(X)\rightarrow \Pic^{\Strat}(Y)$, if
		 	\eqref{eq:iso1} is an isomorphism.
		\item\label{rem:injectivity} If \eqref{eq:iso1} is an isomorphism, then
			\eqref{eq:iso2} is always injective. Indeed, if
			$\alpha\in H^0(X,\mathcal{O}_X^\times)$ is a global
			unit, then $\alpha$ induces a morphism
			$X\rightarrow \G_m$, which is dominant if and only if
			$\alpha$ is non-constant. Hence the induced continuous morphism
			$\pi_1^{\ab, (p')}(X)\rightarrow
			\widehat{\Z}^{(p')}$ is open if and only if $\alpha$
			is non-constant. If \eqref{eq:iso1} is an isomorphism,
			then $f^*\alpha$ is non-constant whenever $\alpha$ is.
			In particular, 
			it follows that \eqref{eq:iso2} is injective.
		\item Clearly \eqref{eq:iso2} is not necessarily an
			isomorphism, even if \eqref{eq:iso1} is: Just take
			the purely inseparable morphism $\G_m\rightarrow \G_m$
			defined by taking the $p$-th root of a coordinate on
			$\G_m$.
		\item The question whether \eqref{eq:iso2} has a $p$-group as
			cokernel whenever \eqref{eq:iso1} is an isomorphism, 
			belongs to the area of Grothendieck's anabelian
			geometry: The cokernel of \eqref{eq:iso2} is finitely
			generated by \cite[Lemme 1]{Kahn/GroupeDesClasses}, so
			we can split off the $p$-power torsion. If $\alpha\in
			H^0(Y,\mathcal{O}_Y^{\times})$
			is a global unit of order prime to $p$ in the cokernel
			of \eqref{eq:iso2}, and if \eqref{eq:iso1} is an isomorphism,
			then $\alpha$ comes from a global unit on $X$ if and
			only if the induced morphism on fundamental groups
			\[\pi_1^{\ab,(p')}(X)\xrightarrow{\cong}\pi_1^{\ab,(p')}(Y)\xrightarrow{\alpha_*}
			\pi_1^{\ab,(p')}(\G_m)=\widehat{\Z}^{(p')}\]
			is induced by a morphism of $k$-schemes $X\rightarrow
			\G_m$. The author does not know whether the condition
			that \eqref{eq:iso1} is an
			isomorphism always implies that the cokernel of \eqref{eq:iso2}  is a
			$p$-group.
	\end{enumerate}
\end{Remark}

We need a few lemmas to prepare the proof of Theorem \ref{thm:rank1}.

\begin{Lemma}\label{lemma:R1} If $G$ is a finitely generated abelian group,
	then the functor $\plim$ is naturally isomorphic to the
	functor which assigns to $G$ its subgroup $G[p']$ of torsion elements
	of order prime to $p$.

	If $G$ is a finite abelian group or an abelian group (not necessarily
	finitely generated) on which multiplication by $p$ is surjective, then
	$R^1\plim G=0$.  
\end{Lemma}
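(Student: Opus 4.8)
The plan is to treat the two assertions separately, reducing the first to the structure theorem for finitely generated abelian groups and the second to the Mittag-Leffler criterion.

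For the first statement, I would begin by noting that $\plim$, being an inverse limit, commutes with finite direct sums, and that the target functor $G\mapsto G[p']$ does too; both are functorial in the evident way. Writing a finitely generated abelian group as $G\cong \Z^r\oplus T_p\oplus T_{p'}$, with $T_p$ the $p$-primary and $T_{p'}$ the prime-to-$p$ part of the torsion subgroup (so that $G[p']=T_{p'}$), it then suffices to compute $\plim$ on each summand. An element of $\plim G$ is a sequence $(g_n)_{n\geq 0}$ with $g_n=pg_{n+1}$, whence $g_0=p^n g_n$ is divisible by $p^n$ for every $n$. On $\Z^r$ this forces $g_0=0$, and since $p^N$ annihilates $T_p$ for $N\gg 0$ it forces $g_0=0$ there as well; thus $\plim(\Z^r\oplus T_p)=0$. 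On $T_{p'}$ multiplication by $p$ is an automorphism, so the transition maps of $(T_{p'})_p$ are all isomorphisms and projection to the zeroth coordinate identifies $\plim T_{p'}$ with $T_{p'}$. Assembling the summands, the natural map $\plim G\to G$, $(g_n)\mapsto g_0$, has image exactly $G[p']$ and is an isomorphism onto it. Naturality is immediate, since evaluation at the zeroth coordinate is a natural transformation $\plim\Rightarrow\id$ and $G\mapsto G[p']$ is a subfunctor of the identity.

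For the second statement, recall that $R^1\plim G=R^1\varprojlim$ of the $\N$-indexed system $(G)_p$, which by the standard cokernel description of $\varprojlim^1$ for a countable system vanishes as soon as $(G)_p$ satisfies the Mittag-Leffler condition. The image of the $m$-fold transition map $G\to G$ of $(G)_p$ is $p^m G$, so Mittag-Leffler amounts to stabilization of the descending chain $G\supseteq pG\supseteq p^2G\supseteq\cdots$. If multiplication by $p$ is surjective then $p^m G=G$ for all $m$ and the chain is constant; if $G$ is finite then a descending chain of subgroups of a finite group must stabilize. In either case $(G)_p$ is Mittag-Leffler, so $R^1\plim G=0$. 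For the surjective case one may instead argue directly: given $(y_n)$, choose $x_0$ arbitrarily and solve $px_{n+1}=x_n-y_n$ recursively using surjectivity of multiplication by $p$, thereby exhibiting $(y_n)$ as the image of $(x_n)$ under the coboundary map $(x_n)\mapsto(x_n-px_{n+1})$.

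The computations are essentially bookkeeping; the only points requiring care are the direction of the multiplication-by-$p$ isomorphism on the prime-to-$p$ part (so that the zeroth-coordinate projection is genuinely an isomorphism and not merely surjective), and checking that the article's $R^1\plim$ agrees with $\varprojlim^1$ of the countable system, so that the Mittag-Leffler criterion applies verbatim.
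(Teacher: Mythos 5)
Your proof is correct and follows essentially the same route as the paper: for the first claim, both arguments come down to the facts that infinitely $p$-divisible elements of a finitely generated abelian group are exactly the prime-to-$p$ torsion elements and that multiplication by $p$ is bijective on $G[p']$, so that projection onto the initial coordinate gives the isomorphism $\plim G \xrightarrow{\cong} G[p']$ (the paper asserts these facts directly, while you derive them from the structure theorem); for the second claim, both invoke the Mittag-Leffler condition, which holds since the images of the transition maps are $p^mG$. Your added direct coboundary argument in the surjective case and the check that $R^1\plim$ is by definition $\varprojlim^1$ of the system $(G)_p$ are fine but not needed beyond what the paper does.
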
 
	\begin{proof} In a finitely
		generated abelian group, an element is infinitely $p$-divisible if and
		only if it is torsion of order prime to $p$.  Moreover, an
		element $x\in G[p']$ admits a unique $p$-th root in $G[p']$.
		It follows that the map $\plim G\rightarrow G[p']$,
		$(x_1,x_2,\ldots)\mapsto x_1$ is an isomorphism.

	For the second claim, if $G$ is finite or if multiplication by $p$ on
	$G$ is surjective, then the projective system \eqref{projSystem}
	satisfies the Mittag-Leffler condition, so $R^1\plim G=0$.
\end{proof}

\begin{Lemma}\label{lemma:groupTheory} Consider the following morphism of
	short exact sequences of abelian groups \begin{equation*} \xymatrix{
			0\ar[r]&D_1\ar[r]\ar[d]_{f}&G_1\ar[r]\ar[d]_g&F_1\ar[r]\ar[d]_h&0\\
			0\ar[r]&D_2\ar[r]&G_2\ar[r]&F_2\ar[r]&0 }
		\end{equation*} with $F_1,F_2$ finitely generated, and such
		that multiplication by $p$ on $D_1$, $D_2$ is surjective
		(e.g.~$D_1,D_2$ divisible).

	Assume that the following conditions are satisfied:
	\begin{enumerate}[label={\emph{(\alph*)}},ref={(\alph*)}] 
		\item\label{prop1} $f$ is surjective with finite kernel.  \item\label{prop2}
			$\ker(g)$ contains no torsion elements of order prime
			to $p$.  
		\item\label{prop3} The restriction $h|_{F_1[p']}:
			F_1[p']\rightarrow F_2[p']$ is surjective.
	\end{enumerate} 
	Then the induced morphism $\plim
	G_1\rightarrow \plim G_2$ is an isomorphism.  
\end{Lemma}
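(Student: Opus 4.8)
The plan is to apply the left exact functor $\plim$ together with its derived functor $R^1\plim$ to both rows of the diagram, compare the resulting short exact sequences via the five lemma, and feed in the torsion information coming from the snake lemma.

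First I would apply $\plim$ to each of the two horizontal short exact sequences. Since multiplication by $p$ is surjective on $D_1$ and $D_2$, Lemma~\ref{lemma:R1} gives $R^1\plim D_1=R^1\plim D_2=0$, so the six-term exact sequences attached to $\plim$ collapse to short exact sequences
\[
0\rightarrow \plim D_i\rightarrow \plim G_i\rightarrow \plim F_i\rightarrow 0,\qquad i=1,2,
\]
compatible with the maps induced by $f$, $g$, $h$. By the five lemma it then suffices to prove that $\plim f\colon \plim D_1\rightarrow\plim D_2$ and $\plim h\colon\plim F_1\rightarrow \plim F_2$ are isomorphisms.

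Next I would feed the original diagram into the snake lemma. Because $f$ is surjective by~\ref{prop1}, its cokernel vanishes, so the connecting sequence degenerates to the short exact sequence
\[
0\rightarrow \ker f\rightarrow \ker g\rightarrow \ker h\rightarrow 0 .
\]
In particular $\ker f$ is a subgroup of $\ker g$, and by hypothesis~\ref{prop2} the latter has no prime-to-$p$ torsion; hence neither does $\ker f$. A short bookkeeping argument then propagates this to $\ker h$: any prime-to-$p$ torsion class $x\in\ker h$, of order $n$ prime to $p$, lifts to some $y\in\ker g$, and then $ny$ lands in the finite group $\ker f$ (by~\ref{prop1}), forcing $y$ to be torsion; writing its order as $p^a m'$ with $\gcd(m',p)=1$, the element $p^a y$ lies in $\ker g[p']=0$, so $y$ is $p$-primary and hence $x$ is both $p$-primary and of order $n$, i.e.\ $x=0$. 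Thus both $\ker f$ and $\ker h$ are free of prime-to-$p$ torsion. I expect this torsion bookkeeping, the only place where the three hypotheses are genuinely combined, to be the main (if modest) obstacle; everything else is formal.

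With this in hand the two isomorphism claims are immediate. For $\plim f$, I apply $\plim$ to $0\rightarrow \ker f\rightarrow D_1\rightarrow D_2\rightarrow 0$: since $\ker f$ is finite by~\ref{prop1} and $p$ is surjective on $D_1$, Lemma~\ref{lemma:R1} kills $R^1\plim\ker f$ and $R^1\plim D_1$, so $\plim f$ is surjective with kernel $\plim \ker f\cong \ker f[p']=0$, an isomorphism. For $\plim h$, finite generation of $F_1,F_2$ together with Lemma~\ref{lemma:R1} identifies $\plim h$ with the restriction $h|_{F_1[p']}\colon F_1[p']\rightarrow F_2[p']$; this is surjective by~\ref{prop3} and has kernel $\ker(h)[p']=0$ by the previous paragraph, hence is an isomorphism as well. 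The five lemma then yields that $\plim g$ is an isomorphism, completing the proof.
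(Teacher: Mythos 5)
Your proof is correct, and it shares the paper's key opening move: since multiplication by $p$ is surjective on $D_1,D_2$, Lemma \ref{lemma:R1} gives $R^1\plim D_i=0$, collapsing the six-term sequences to short exact sequences of $\plim$'s. The endgame, however, is organized differently. The paper treats injectivity and surjectivity of $\plim g$ asymmetrically: injectivity comes for free from left-exactness, since $\ker(\plim g)=\plim(\ker g)$, and $\ker g$ is finitely generated (an extension of $\ker h\subseteq F_1$ by the finite group $\ker f$) with no prime-to-$p$ torsion by (b), so $\plim(\ker g)=(\ker g)[p']=0$ by Lemma \ref{lemma:R1}; surjectivity then follows from a four-lemma chase, which only needs \emph{surjectivity} of $\plim f$ (from finiteness of $\ker f$, killing $R^1\plim\ker f$) and of $\plim h$ (which is exactly (c) after the identification $\plim h=h|_{F_1[p']}$). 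You instead upgrade both outer vertical maps to \emph{isomorphisms} so that the short five lemma applies, and the price is the extra snake-lemma torsion-propagation argument showing $\ker h$ has no prime-to-$p$ torsion (lift $x$ to $y\in\ker g$, note $ny$ lands in the finite group $\ker f$ so $y$ is torsion, split off the $p$-part and use (b)); that argument is correct, but it is precisely the work the paper's asymmetric treatment avoids, since the paper never needs to know anything about the torsion of $\ker h$ or $\ker f$. In exchange, your route proves a slightly stronger intermediate statement ($\plim f$ and $\plim h$ are isomorphisms, not merely surjective), while the paper's is the more economical diagram chase.
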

\begin{proof} 
	Since $R^1\plim D_i=0$ for $i=1,2$ by Lemma
	\ref{lemma:R1}, we get the following morphism of short exact
	sequences: \[ \xymatrix{ 0\ar[r]&\plim
		D_1\ar[r]\ar[d]_{\plim f}&\plim
		G_1\ar[r]\ar[d]_{\plim g}&\plim
		F_1\ar[r]\ar[d]_{\plim h}&0\\
		0\ar[r]&\plim D_2\ar[r]&\plim
	G_2\ar[r]&\varprojlim F_2\ar[r]&0 } \] By the
	left-exactness of $\plim$, we have
	$\plim(\ker g)=\ker(\plim g)$.  But by
	assumption \ref{prop1}, $\ker g$ is an extension of
	two finitely generated groups, and hence finitely
	generated itself.  It has no prime-to-$p$ torsion by
	\ref{prop2}, so by Lemma \ref{lemma:R1},
	$\plim(\ker g)=0$.  Thus $\plim g$ is
	injective.

	Next, by \ref{prop1}, $f$ is surjective with finite kernel,
	which implies that $\plim f$ is surjective, since
	$R^1\plim(\ker(f))=0$ by Lemma \ref{lemma:R1}.

	Finally, by Lemma \ref{lemma:R1} we know that $\plim
	h=h|_{F_1[p']}:F_1[p']\rightarrow F_2[p']$.  This morphism is
	surjective by \ref{prop3}.  It follows that $\plim g$
	is also surjective, which completes the proof.  
\end{proof}
\begin{Lemma}\label{lemma:pi1compactification} If $f:Y\rightarrow X$
	is a morphism of connected, smooth, separated, finite type
	$k$-schemes, such that $f$ induces an isomorphism
	$\pi_1(Y)^{\ab,(p')}\xrightarrow{\cong}\pi_1(X)^{\ab,(p')}$, and
	if $\overline{X},\overline{Y}$ are smooth compactifications of
	$X,Y$, such that $f$ extends to
	$\bar{f}:\overline{Y}\rightarrow \overline{X}$, then the map
	\[ 
		\pi_1(\overline{Y})^{\ab,(p')}\rightarrow
		\pi_1(\overline{X})^{\ab,(p')} 
		\] 
	induced by $\bar{f}$
	is surjective with finite kernel.  
\end{Lemma}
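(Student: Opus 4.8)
The plan is to assemble a commutative square relating the four fundamental groups and to exploit that both horizontal maps (restriction from the open part to the smooth proper model) are surjective, so that surjectivity of $\bar f_*$ becomes formal; the genuine content is the finiteness of the kernel, which I will reduce to an equality of Albanese dimensions.

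First I would record the basic input that for a dense open immersion $V\hookrightarrow\overline V$ into a smooth (hence normal) proper $k$-scheme, the induced map $\pi_1(V)\to\pi_1(\overline V)$ is surjective, since every connected finite \'etale cover of $\overline V$ restricts to a connected \'etale cover of $V$. Passing to maximal abelian prime-to-$p$ quotients yields surjections $\pi_1(Y)^{\ab,(p')}\twoheadrightarrow\pi_1(\overline Y)^{\ab,(p')}$ and $\pi_1(X)^{\ab,(p')}\twoheadrightarrow\pi_1(\overline X)^{\ab,(p')}$. Because $\bar f$ extends $f$, these fit into a commutative square whose left vertical arrow $f_*$ is, by hypothesis, an isomorphism. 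The composite $\pi_1(Y)^{\ab,(p')}\xrightarrow{\cong}\pi_1(X)^{\ab,(p')}\twoheadrightarrow\pi_1(\overline X)^{\ab,(p')}$ is therefore surjective and factors through $\bar f_*$, so $\bar f_*$ is surjective.

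Next I would reduce the finite-kernel claim to a rank count. A classical theorem, obtained via the Albanese map, identifies $\pi_1(\overline V)^{\ab,(p')}$ with the prime-to-$p$ Tate module $T^{(p')}\Alb(\overline V)$; in particular it is a free $\widehat{\Z}^{(p')}$-module of finite rank $2\dim\Alb(\overline V)$, hence torsion-free. For a torsion-free group a finite kernel is automatically trivial, so the assertion is equivalent to injectivity of $\bar f_*$; and since $\bar f_*$ is a surjection of free $\widehat{\Z}^{(p')}$-modules of finite rank, it is injective exactly when source and target have equal rank, i.e.\ when $\dim\Alb(\overline Y)=\dim\Alb(\overline X)$.

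The hard part will be this equality of Albanese dimensions: surjectivity only yields $\dim\Alb(\overline Y)\geq\dim\Alb(\overline X)$, and the reverse inequality requires genuine geometric input, since the surjection $\pi_1(X)^{\ab,(p')}\twoheadrightarrow\pi_1(\overline X)^{\ab,(p')}$ is not intrinsic to the group but depends on the chosen compactification. I would supply this input through Serre's generalized Albanese variety $\Alb(V)$, a semiabelian variety whose abelian quotient is $\Alb(\overline V)$ and which satisfies $\pi_1(V)^{\ab,(p')}\cong T^{(p')}\Alb(V)$. The morphism $f$ induces a homomorphism $\Alb(f)$ of semiabelian varieties realizing $f_*$ on Tate modules; as $f_*$ is an isomorphism, $\Alb(f)$ is an isogeny. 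An isogeny of semiabelian varieties carries the maximal subtorus into the maximal subtorus, and hence induces an isogeny on the abelian quotients $\Alb(\overline Y)\to\Alb(\overline X)$ (compatibly with $\Alb(\bar f)$), forcing $\dim\Alb(\overline Y)=\dim\Alb(\overline X)$ and completing the argument. Alternatively, one can argue that $\bar f^*$ is an isomorphism on the weight-one part $H^1(\overline V,\Q_\ell)\subseteq H^1(V,\Q_\ell)$, using strictness of $f^*$ for the weight filtration on $\ell$-adic cohomology.
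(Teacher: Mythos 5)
Your overall strategy---surjectivity is formal, and finiteness of the kernel should come from the semiabelian Albanese variety, whose abelian quotient is the Albanese of the compactification---is essentially the paper's own argument. But there is a genuine error in your reduction step. The claim that $\pi_1(\overline V)^{\ab,(p')}\cong T^{(p')}\Alb(\overline V)$, and in particular that $\pi_1(\overline V)^{\ab,(p')}$ is torsion-free, is false. For a smooth proper variety over an algebraically closed field the correct statement, and the one the paper uses, is the exact sequence
\[
0\rightarrow \hom\bigl(\NS(\overline V)\{\ell\},\Q_\ell/\Z_\ell\bigr)\rightarrow \pi_1(\overline V)^{\ab,(\ell)}\rightarrow T_\ell\Alb_{\overline V}\rightarrow 0 ,
\]
so the prime-to-$p$ torsion of the N\'eron--Severi group contributes a finite, generally nontrivial, torsion subgroup: an Enriques surface in characteristic $\neq 2$ (where $\pi_1=\Z/2\Z$ but $\Alb=0$) or a classical Godeaux surface ($\pi_1=\Z/5\Z$, $\Alb=0$) are counterexamples to your identification. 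Consequently your assertion that ``finite kernel is equivalent to injectivity of $\bar f_*$'' is unjustified: injectivity is strictly stronger than what the lemma claims (and than what the paper proves---it only obtains injectivity for all but finitely many $\ell$), and your proof of it, a rank count between \emph{free} $\widehat{\Z}^{(p')}$-modules, collapses because these groups need not be free. The same caveat applies to your identification $\pi_1(V)^{\ab,(p')}\cong T^{(p')}\Alb(V)$ for the open varieties, though there it is harmless: the Spie\ss--Szamuely theorem identifies the Tate module with the torsion-free quotient, and an isomorphism on $\pi_1^{\ab,(p')}$ still induces an isomorphism on these quotients.

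The rest of your argument is sound and coincides with the paper's key geometric input: the hypothesis forces $\Alb_Y\rightarrow\Alb_X$ to be an isogeny of semiabelian varieties (the paper phrases this via Chevalley's structure theorem applied to the reduced identity component of the kernel), hence the induced map $\Alb_{\overline Y}\rightarrow\Alb_{\overline X}$ on abelian quotients is an isogeny. To repair your proof, replace the false identification by the exact sequence above: the composite $\pi_1(\overline Y)^{\ab,(p')}\rightarrow\pi_1(\overline X)^{\ab,(p')}\rightarrow T^{(p')}\Alb_{\overline X}$ kills the finite N\'eron--Severi term of $\overline Y$, so it factors through a surjection $T^{(p')}\Alb_{\overline Y}\rightarrow T^{(p')}\Alb_{\overline X}$ of free modules of equal rank, which is therefore injective; hence $\ker(\bar f_*)$ is contained in the finite group $\hom\bigl(\NS(\overline Y)\{\ell\},\Q_\ell/\Z_\ell\bigr)$-part, i.e.\ it is finite---which is exactly the assertion of the lemma, no more and no less.
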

\begin{proof} The surjectivity is clear, as
	$\pi_1(X)^{\ab,(p')}$ and
	$\pi_1(Y)^{\ab,(p')}$ surject onto
	$\pi_1(\overline{X})^{\ab,(p')}$ and
	$\pi_1(\overline{Y})^{\ab,(p')}$,
	respectively.  We use the theory of the
	Albanese variety: After choosing a base point
	$x\in X(k)$, there exists a unique
	semi-abelian variety $\Alb_X$, together with a
	map $\alb_{X,x}:X\rightarrow \Alb_X$, such
	that $\alb_{X,x}(x)=0$, and such that any map
	$g:X\rightarrow A$ from $X$ to a semi-abelian
	variety $A$ with $g(x)=0$ factors uniquely
	through $\alb_{X,x}$.  For more details, see
	e.g.~\cite{Szamuely/Albanese}.  The Albanese
	variety classifies abelian coverings of $X$ in
	the following sense: For $\ell$ a prime
	different from $p$, there exists a canonical
	isomorphism 
	\[
		\hom(H^1(X,\Z_\ell),\Q_\ell/\Z_\ell)\cong
		\Alb_X(k)\{\ell\}, 
		\] 
	see \cite[Cor.~4.3]{Szamuely/Albanese}, where $\Alb_X(k)\{\ell\}$ is
	the the subgroup of $\Alb_X(k)$ of all $\ell$-power torsion elements.
	As a semi-abelian variety, $\Alb_X$ can be written uniquely as an
	extension of an abelian variety by a torus.  The unique abelian
	variety appearing in this description of $\Alb_X$ is canonically
	isomorphic to $\Alb_{\overline{X}}$. 

	By Chevalley's structure theorem for algebraic groups, the connected
	component of the origin (with its reduced structure) $K^0_{\red}$ of
	the kernel of the morphism of group schemes $\Alb_Y\rightarrow \Alb_X$
	is a semi-abelian variety, and since every map from an abelian variety
	into an affine variety is constant, it follows that the unique abelian
	variety quotient of $K^0_{\red}$ is precisely the connected component
	of the origin (with its reduced structure) of the kernel of
	$\Alb_{\overline{Y}}\rightarrow \Alb_{\overline{X}}$.

	With that in mind, we see that the assumption that
	$\pi_1(Y)^{\ab,(p')}\xrightarrow{\cong}\pi_1(X)^{\ab,(p')}$ is an
	isomorphism implies that the map $\Alb_Y(k)\rightarrow \Alb_X(k)$ is
	an isomorphism on prime-to-$p$ torsion, and hence $K^0_{\red}$ is the
	trivial semi-abelian variety.  Then
	$\ker(\Alb_{\overline{Y}}\rightarrow \Alb_{\overline{Y}})^0_{\red}$ is
	the trivial abelian variety, which shows that the kernel of
	$\Alb_{\overline{Y}}(k)\rightarrow \Alb_{\overline{X}}(k)$ is a finite
	group. 

	Writing $T_{\ell}$ for the Tate module, we see that if $\ell$
	is a prime different from $p$, then $T_\ell
	\Alb_{\overline{Y}}\rightarrow T_\ell\Alb_{\overline{X}}$ is
	injective.  Finally, since $\overline{X}$ and $\overline{Y}$
	are proper and smooth, we get a morphism of short exact
	sequences
	\begin{equation*} \xymatrix{
			0\ar[r]&\hom(\NS(\overline{Y})\{\ell\},\Q_\ell/\Z_\ell)\ar[r]\ar[d]&\pi_1(\overline{Y})^{\ab,(\ell)}\ar[r]\ar[d]&T_\ell
			\Alb_{\overline{Y}}\ar[r]\ar@{^{(}->}[d]&
			0\\
			0\ar[r]&\hom(\NS(\overline{X})\{\ell\},\Q_\ell/\Z_\ell)\ar[r]&\pi_1(\overline{X})^{\ab,(\ell)}\ar[r]&T_\ell
			\Alb_{\overline{X}}\ar[r]& 0.
		} 
	\end{equation*} 
	The groups on the left are finite groups, so we deduce that
	$\pi_1(\overline{Y})^{\ab,(\ell)}\rightarrow\pi_1(\overline{X})^{\ab,(\ell)}$
	always has a finite kernel, and is injective for all but
	finitely many $\ell$.  This implies that the kernel of
	$\pi_1(\overline{Y})^{\ab,(p')}\rightarrow
	\pi_1(\overline{X})^{\ab,(p')}$ is finite.  
\end{proof}

\begin{Proposition}\label{prop:projlim} 
	With the notations and assumptions of Theorem \ref{thm:rank1}, 
			pullback along $f$ induces an isomorphism
			\[
				\plim \Pic(X)
				\xrightarrow{\cong}\plim
				\Pic(Y).
				\] 
%	Let $k$ be an algebraically closed field of characteristic $p$, and
%	$f:Y\rightarrow X$ a morphism of smooth, connected, separated, finite
%	type $k$-schemes, such that the following two properties are
%	satisfied:
%	\begin{enumerate}[label={\emph{(\alph*)}},ref={(\alph*)}]
%		\item\label{thm:assumption1} There exist good
%			compactifications $\overline{X}$ and $\overline{Y}$ of
%			$X$ and $Y$, such that $f$ extends to
%			$\bar{f}:\overline{Y}\rightarrow \overline{X}$.  
%		\item\label{prop:projlim:assumption2} $f$ induces an isomorphism
%			\[
%				\pi_1(Y)^{\ab,(p')}\xrightarrow{\cong}
%				\pi_1(X)^{\ab,(p')}.
%				\] 
%			\item\label{prop:projlim:assumption3} The cokernel of the morphism
%				\[
%					f^*:H^0(X,\mathcal{O}_X^\times)\rightarrow
%					H^0(Y,\mathcal{O}_{Y}^{\times})
%					\]
%					is a $p$-group.  
%			\end{enumerate} 
%			Then pullback along $f$ induces an isomorphism
%			\[
%				\plim \Pic(X)
%				\xrightarrow{\cong}\plim
%				\Pic(Y).
%				\] 
\end{Proposition}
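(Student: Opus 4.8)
The plan is to deduce the statement from the group-theoretic criterion of Lemma \ref{lemma:groupTheory}, applied to the pullback homomorphism $f^*\colon\Pic(X)\to\Pic(Y)$. To produce the two short exact sequences it requires, I would use the good compactifications: for $Z\in\{X,Y\}$ the surjection $\Pic(\overline Z)\twoheadrightarrow\Pic(Z)$ together with the sequence $0\to\Pic^0(\overline Z)\to\Pic(\overline Z)\to\NS(\overline Z)\to 0$ yields $0\to D_Z\to\Pic(Z)\to F_Z\to 0$, where $D_Z$ is the image of $\Pic^0(\overline Z)$ and $F_Z$ the image of $\NS(\overline Z)$. Since $\Pic^0(\overline Z)$ consists of the $k$-points of an abelian variety over the algebraically closed field $k$, it is divisible, so multiplication by $p$ is surjective on $D_Z$; and $F_Z$ is a quotient of the finitely generated group $\NS(\overline Z)$, hence finitely generated. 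Because $f$ extends to $\bar f$ and $\bar f^*$ preserves algebraic equivalence, $f^*$ respects this filtration, inducing $D_X\to D_Y$ and $F_X\to F_Y$. It then remains to check conditions \ref{prop1}--\ref{prop3} of Lemma \ref{lemma:groupTheory}.

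For \ref{prop2} and \ref{prop3} I would first show $f^*$ is an isomorphism on prime-to-$p$ torsion, $\Pic(X)[p']\xrightarrow{\cong}\Pic(Y)[p']$. For $n$ prime to $p$ the Kummer sequence gives, functorially in $f$, an exact sequence $0\to H^0(Z,\mathcal O_Z^\times)/n\to H^1_{\et}(Z,\mu_n)\to\Pic(Z)[n]\to 0$, and since $k=\bar k$ we identify $H^1_{\et}(Z,\mu_n)=\Hom(\pi_1^{\ab,(p')}(Z),\Z/n)$. Assumption \ref{prop:projlim:assumption2} makes these $\Hom$-groups correspond under $f$; assumption \ref{prop:projlim:assumption3} (the cokernel of $H^0(X,\mathcal O_X^\times)\to H^0(Y,\mathcal O_Y^\times)$ being a $p$-group, with injectivity from Remark \ref{rem:injectivity}) gives $H^0(X,\mathcal O_X^\times)/n\xrightarrow{\cong}H^0(Y,\mathcal O_Y^\times)/n$ for $n$ prime to $p$. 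The five lemma then yields $\Pic(X)[n]\cong\Pic(Y)[n]$, and taking the union over such $n$ proves the claim. Condition \ref{prop2} follows immediately, as a prime-to-$p$ torsion element of $\ker(f^*)$ lies in $\Pic(X)[p']$, on which $f^*$ is injective. For \ref{prop3}, divisibility of $D_X$ shows $\Pic(X)[p']\to F_X[p']$ is surjective, and the commuting square with the isomorphism $\Pic(X)[p']\cong\Pic(Y)[p']$ and the surjection $\Pic(Y)[p']\to F_Y[p']$ forces $F_X[p']\to F_Y[p']$ to be surjective.

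The remaining condition \ref{prop1}, surjectivity of $D_X\to D_Y$ with finite kernel, is where I expect the real work. Surjectivity I would get from Albanese--Picard duality: by Lemma \ref{lemma:pi1compactification} and its proof, $\Alb_{\overline Y}\to\Alb_{\overline X}$ is an isogeny (finite abelian-variety kernel, and an isomorphism on Tate modules using that $\bar f_*$ is surjective on $\pi_1^{\ab,(p')}$), so its dual $\Pic^0(\overline X)\to\Pic^0(\overline Y)$ is surjective on $k$-points, whence $D_X\to D_Y$ is surjective. Finiteness of $\ker(D_X\to D_Y)$ is the main obstacle. A snake-lemma comparison of the presentations $0\to L_Z\to\Pic^0(\overline Z)\to D_Z\to 0$, where $L_Z=\Pic^0(\overline Z)\cap\langle\text{boundary divisors}\rangle$ is finitely generated, reduces it (since $\ker\bar f^*$ is finite) to the equality $\rank L_X=\rank L_Y$. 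Here both hypotheses enter: via the unit--boundary sequence in the proof of Proposition \ref{prop:descriptionofI}, $\rank L_Z=\rank\ker(\bigoplus_i\Z D_i^Z\to\NS(\overline Z))-\rank\bigl(H^0(Z,\mathcal O_Z^\times)/k^\times\bigr)$. The unit term is matched by assumption \ref{prop:projlim:assumption3} (its cokernel being a $p$-group forces equal ranks), and the first term is the rank of the toric part of the semi-abelian Albanese $\Alb_Z$, which assumption \ref{prop:projlim:assumption2} controls: the argument of Lemma \ref{lemma:pi1compactification} shows the connected kernel of $\Alb_Y\to\Alb_X$ is trivial, so this is an isogeny of semi-abelian varieties and the torus ranks agree. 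Matching both terms gives $\rank L_X=\rank L_Y$, hence $\ker(D_X\to D_Y)$ is finite, and Lemma \ref{lemma:groupTheory} completes the proof.
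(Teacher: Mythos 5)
Your proof is correct in outline and reaches the same reduction as the paper (the decomposition $0\to\Pic^0\to\Pic\to\NS\to 0$ on the open varieties, Lemma \ref{lemma:groupTheory}, and the Kummer-theory argument for conditions \ref{prop2} and \ref{prop3} are essentially identical to the paper's), but you handle the hard step --- condition \ref{prop1}, finiteness of $\ker(D_X\to D_Y)$ --- by a genuinely different route. The paper shows that $\coker\bigl(K_X\cap\Pic^0(\overline{X})\to K_Y\cap\Pic^0(\overline{Y})\bigr)$ is finite by comparing the Gysin exact sequences in \'etale cohomology for $(X,\overline{X})$ and $(Y,\overline{Y})$: the kernels of the cycle class maps $M_Z\otimes\Z_\ell\to H^2(\overline{Z},\Z_\ell(1))$ agree for almost all $\ell$, so the cokernel dies after tensoring with $\Z_\ell$ for almost all $\ell$. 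You instead compute $\rank L_Z=\rank\ker\bigl(M_Z\to\NS(\overline{Z})\bigr)-\rank\bigl(H^0(Z,\mathcal{O}_Z^\times)/k^\times\bigr)$ and identify the first term with the dimension of the torus part of the semi-abelian Albanese $\Alb_Z$ (this is the Serre description, available in \cite{Szamuely/Albanese}), then match both terms using the two hypotheses of the theorem. This is conceptually attractive: it explains \emph{why} the ranks agree (torus rank minus unit rank on both sides), whereas the paper's cohomological argument is more self-contained, using only the Gysin sequence and no structure theory of the generalized Albanese. Two points in your write-up deserve more care. First, "connected kernel trivial, so this is an isogeny" elides surjectivity of $\Alb_Y\to\Alb_X$: finite kernel alone only gives $\dim\Alb_Y\le\dim\Alb_X$; you should note that the image is a semi-abelian subvariety containing all prime-to-$p$ torsion of $\Alb_X$ (by the torsion isomorphism used in Lemma \ref{lemma:pi1compactification}), hence equals $\Alb_X$, and only then do the torus ranks agree. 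The same remark applies to your claim that $\Alb_{\overline{Y}}\to\Alb_{\overline{X}}$ is an isogeny, though there your parenthetical (finite kernel plus isomorphism on Tate modules) does supply the missing surjectivity. Second, you implicitly use that $\bar{f}^*$ on $\Pic^0$ is the dual of the Albanese morphism; this is standard but should be stated, since it is exactly what converts the isogeny of Albaneses into surjectivity with finite kernel of $\Pic^0(\overline{X})\to\Pic^0(\overline{Y})$ (the paper gets this instead from the torsion comparison via Kummer theory plus the fact that the map is algebraic).
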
 
\begin{proof}
	By
	Lemma \ref{lemma:pi1compactification} the induced morphism
	$\pi_1(\overline{Y})^{\ab,p'}\rightarrow \pi_1(\overline{X})^{\ab,p'}$
	is surjective with finite kernel.  Define
	$K_X:=\ker(\Pic(\overline{X})\rightarrow \Pic(X))$ and
	$K_Y:=\ker(\Pic(\overline{Y})\rightarrow \Pic(Y))$.  We get a
	commutative diagram
	\begin{equation*}
		\begin{split}
			\xymatrix{
				0\ar[r]&K_X\ar[d]\ar[r]&\ar[d]\Pic(\overline{X})\ar[r]&\ar[d]\Pic(X)\ar[r]&0\\
				0\ar[r]&K_Y\ar[r]&\Pic(\overline{Y})\ar[r]&\Pic(Y)\ar[r]&0
			} 
		\end{split}
	\end{equation*} 
	with $K_X, K_Y$ finitely generated abelian groups.  We
	have a second exact sequence
	\[ 0\rightarrow \Pic^0(\overline{X})\rightarrow \Pic(\overline{X})\rightarrow \NS(\overline{X})\rightarrow
		0 
		\] 
	with $\NS(\overline{X})$ a finitely generated group, according to,
	e.g., \cite[Thm.~3]{Kahn/GroupeDesClasses}.  Moreover,
	$\Pic^0(\overline{X})$ is the set of $k$-points of an abelian variety,
	so $\Pic^{0}(\overline{X})$ is a divisible abelian group.

	Define $\Pic^0(X)$ as the image of $\Pic^0(\overline{X})$ in $\Pic(X)$,
	and $\NS(X)$ as $\Pic(X)/\Pic^0(X)$.  Then $\Pic^0(X)$ still divisible,
	and $\NS(X)$ is still finitely generated.  Pullback along $f$ induces
	a morphism $\Pic^0(X)\rightarrow \Pic^0(Y)$, and hence a morphism
	$\NS(X)\rightarrow \NS(Y)$.

	We obtain the following commutative diagram with exact
	rows:
	\begin{equation}\label{diag:mainpic0}
		\begin{split}
		\xymatrix{
			0\ar[r]&\Pic^0(X)\ar[d]\ar[r]&\Pic(X)\ar[r]\ar[d]&\NS(X)\ar[r]\ar[d]&0\\
			0\ar[r]&\Pic^0(Y)
			\ar[r]&\Pic(Y)\ar[r]&\NS(Y)\ar[r]&0
		} 
	\end{split}
	\end{equation} 
	We are now in the situation of Lemma \ref{lemma:groupTheory}, and
	check that the conditions \ref{prop1}, \ref{prop2} and \ref{prop3}
	from the lemma are satisfied for diagram \eqref{diag:mainpic0}.

	For Lemma \ref{lemma:groupTheory}, \ref{prop1} we have to show that
	$\Pic^0(X)\rightarrow \Pic^0(Y)$ is surjective with finite kernel.

	For every $n$ prime to $p$, Kummer theory shows that there a morphism
	of short exact sequences of abelian groups
	\begin{equation}\label{diag:pic0}
		\xymatrix@C=.3cm{
			0\ar[r]&\ar[d]H^0(X,\mathcal{O}_X^{\times})/H^0(X,\mathcal{O}_X^{\times})^n\ar[r]
			&\ar[d]
			\Hom(\pi_1^{\ab,(p')}(X),\Z/n\Z)\ar[r]
			&\ar[d]\Pic(X)[n]\ar[r]
			&
			0\\
			0\ar[r]&H^0(Y,\mathcal{O}_Y^{\times})/H^0(Y,\mathcal{O}_Y^{\times})^n\ar[r]
			&
			\Hom(\pi_1^{\ab,(p')}(Y),\Z/n\Z)\ar[r]
			&\Pic(Y)[n]\ar[r]
		& 0 }
	\end{equation} 
	where the two left vertical arrows are isomorphisms by the
	assumptions \ref{prop:projlim:assumption2} and
	\ref{prop:projlim:assumption3} of Theorem \ref{thm:rank1}, and hence so is the third.  Since by Lemma
	\ref{lemma:pi1compactification} the morphism
	$\pi_1(\overline{Y})^{\ab, (p')}\rightarrow \pi_1(\overline{X})^{\ab,
	(p')}$ is surjective with finite kernel, the same argument as above
	shows that for almost all $n$ prime to $p$,
	$\Pic(\overline{X})[n]\xrightarrow{\cong}\Pic(\overline{Y})[n]$.
	Since $\Pic^0(\overline{X})$ is divisible, we have a short exact
	sequence
	\[
		0\rightarrow
		\Pic^0(\overline{X})[n]\rightarrow
		\Pic(\overline{X})[n]\rightarrow
		\NS(\overline{X})[n]\rightarrow
		0
		\] 
	which shows that for almost all $n$ prime to $p$,
	$\Pic^0(\overline{X})[n]\xrightarrow{\cong}\Pic^{0}(\overline{Y})[n]$.
	Since $\Pic^0(\overline{X})$ and $\Pic^0(\overline{Y})$ are sets of
	$k$-points of abelian varietes, and since pull-back along $f$ induces
	a morphism of the underlying abelian varieties, it follows that
	$\Pic^0(\overline{X})\rightarrow \Pic^0(\overline{Y})$ is surjective
	with finite kernel.  From the morphism of short exact sequences
	\begin{equation*}
		\xymatrix{
			0\ar[r]&K_X\cap
			\Pic^0(\overline{X})\ar[r]
			\ar[d]&\Pic^0(\overline{X})\ar[r]\ar[d]&
			\Pic^0(X)\ar[r]\ar[d]&0\\
			0\ar[r]&K_Y\cap
			\Pic^0(\overline{Y})\ar[r]
			&\Pic^0(\overline{Y})\ar[r]&
			\Pic^0(Y)\ar[r]&0
		}
	\end{equation*} 
	we see that $\Pic^0(X)\rightarrow \Pic^0(Y)$ is surjective.  To prove
	that its kernel is finite, it suffices to show that $\coker(K_X\cap
	\Pic^0(\overline{X})\rightarrow K_Y\cap \Pic^0(\overline{Y}))$ is
	finite.  For this, let $\ell$ be a prime different from $p$, and write
	$\overline{X}\setminus X =: D_X = \bigcup_{i=1}^{r(X)} D_{X,i}$ with
	$D_{X,i}$ smooth divisors.  Let $M_X:=\bigoplus D_{X,i}\mathbb{Z}$ be the
	free $\Z$-module of rank $r(X)$, and similarly for $Y$.  We have the
	associated Gysin exact sequence in \'etale cohomology \cite[Ch.~VI,
	Cor.~5.3]{Milne/EtaleCohomologyBook}:
	\begin{equation*}
		\xymatrix@C=.5cm{
			0\ar[r]&H^1(\overline{X},\Z_\ell(1))\ar[d]\ar[r]&H^1(X,\Z_\ell(1))\ar[d]\ar[r]&\ar[d]M_X\otimes
			\Z_\ell\ar[r]^--{c^{\ell,X}_1}&
			H^2(\overline{X},
			\Z_\ell(1))\ar[d]\\
			0\ar[r]&H^1(\overline{Y},\Z_\ell(1))\ar[r]&H^1(Y,\Z_\ell(1))\ar[r]&M_Y\otimes
			\Z_\ell\ar[r]^--{c^{\ell,
			Y}_1}&
			H^2(\overline{Y},
		\Z_\ell(1)) }
	\end{equation*} 
	As we have seen, for almost all primes $\ell$ the two left vertical
	arrows are isomorphisms, so
	$\ker(c_1^{\ell,X})\xrightarrow{\cong}\ker(c_1^{\ell,Y})$ for almost
	all $\ell$.  But by construction, we have a commutative diagram
	\begin{equation*}
		\xymatrix{
			\ker(c_1^{\ell,X})\ar@{->>}[r]\ar[d]&(K_X\cap
			\Pic^0(\overline{X}))\otimes
			\Z_\ell\ar[d]\\
			\ker(c_1^{\ell,Y})\ar@{->>}[r]&(K_Y\cap
			\Pic^0(\overline{Y}))\otimes
			\Z_\ell\\
		}
	\end{equation*} 
	This shows that the finitely generated group $\coker(K_X\cap
	\Pic^0(\overline{X})\rightarrow K_Y\cap \Pic^{0}(\overline{Y}))$ is
	trivial after tensoring with $\Z_\ell$ for almost all $\ell$ and hence
	finite. This finishes the proof that condition \ref{prop1} from  Lemma \ref{lemma:groupTheory},
	holds for diagram \eqref{diag:mainpic0}.

	To show that Lemma \ref{lemma:groupTheory}, \ref{prop2} holds for
	diagram \eqref{diag:mainpic0} we note that we have seen below
	\eqref{diag:pic0} that the kernel of
	$\Pic(X)\rightarrow \Pic(Y)$ contains no prime-to-$p$-torsion. 	

	Finally, lets check that Lemma \ref{lemma:groupTheory}, \ref{prop3} holds. Since $\Pic^0(X)$ is
	divisible, for every $n$ prime to $p$ we get a commutative diagram
	\begin{equation*}
		\xymatrix{ 
			\Pic(X)[n]\ar@{->>}[r]\ar[d]^{\cong}&\NS(X)[n]\ar[d]\\
			\Pic(Y)[n]\ar@{->>}[r]&\NS(Y)[n]
			 }
		 \end{equation*}
		 with surjective horizontal arrows. This shows that
		 $\NS(X)[p']\rightarrow \NS(X)[p']$ is surjective, so
		 Lemma \ref{lemma:groupTheory}, \ref{prop3} holds for diagram
		 \eqref{diag:mainpic0}.
\end{proof}
With Proposition \ref{prop:projlim}, the proof of Theorem
\ref{thm:rank1} becomes simple:
\begin{proof}[Proof of Theorem \ref{thm:rank1}]
	By Proposition \ref{prop:SESpicstrat}, $f$ induces a morphism of short
	exact sequences
	\begin{equation*}
		\xymatrix{
			0\ar[r]&\mathbb{I}(X)\ar[r]\ar[d]&\Pic^{\Strat}(X)\ar[r]\ar[d]&\plim
			\Pic(X)\ar[r]\ar[d]&
			0\\
			0\ar[r]&\mathbb{I}(Y)\ar[r]&\Pic^{\Strat}(Y)\ar[r]&\plim
			\Pic(Y)\ar[r]&
			0
		}
	\end{equation*}
	By assumption there exist good compactifications $\overline{X}$ and
	$\overline{Y}$,
	such that $f$ extends to
	$\bar{f}:\overline{Y}\rightarrow \overline{X}$.
	Then the right vertical morphism is an isomorphism by Proposition
	\ref{prop:projlim}.  We show that Proposition \ref{prop:descriptionofI} implies
	that the left vertical arrow is an isomorphism under our assumptions:   If
	$C:=\coker(H^0(X,\mathcal{O}_X^\times)\rightarrow
	H^0(Y,\mathcal{O}_Y^\times))$, then by assumption $C$ is a finitely
	generated abelian $p$-group,
	and hence a finite $p$-group.
	It follows that the diagonal map
	\[C\rightarrow
		\varprojlim_n
		C/p^n		\]
	is an isomorphism. By the exactness of $p$-adic completion of finitely
	generated abelian groups, we have a
	canonical isomorphism
	\[ \varprojlim_n C/p^n \cong\coker(\varprojlim_n
			H^0(X,\mathcal{O}_X^{\times})/p^n\rightarrow \varprojlim_n
			H^0(Y,\mathcal{O}_Y^{\times})/p^n),\]
so looking at the long exact sequence attached to the
	morphism of short exact sequences 
	\begin{equation*}
		\xymatrix{
			0\ar[r]&H^0(X,\mathcal{O}_X^\times)/k^\times\ar[d]
			\ar[r]&\varprojlim_n
			H^0(X,\mathcal{O}_X^{\times})/p^n\ar[d]_{\varprojlim
			f^*}\ar[r]&\mathbb{I}(X)\ar[r]\ar[d]&
			0\\
			0\ar[r]&H^0(Y,\mathcal{O}_Y^\times)/k^\times
			\ar[r]&\varprojlim_n
			H^0(Y,\mathcal{O}_Y^{\times})/p^n\ar[r]&\mathbb{I}(Y)\ar[r]&
			0,\\
		}
	\end{equation*}
	we see that
	$\mathbb{I}(X)\rightarrow \mathbb{I}(Y)$ is 
	surjective. But by Remark \ref{rem:globalunits}, \ref{rem:injectivity}
	the two left vertical arrows are injective, so $\mathbb{I}(X)\cong
	\mathbb{I}(Y)$ as claimed.
\end{proof}

To close this section, we show that in the case that $X=\Spec k$, the
assumption that $Y$ admits a good compactification is not necessary:

\begin{Proposition}\label{prop:rank1absolute}
	Let $Y$ be a smooth, connected, separated,
	finite type $k$-scheme, such that
	$\pi_1^{\ab,(p')}(Y)=0$.  Then
	$\Pic^{\Strat}(Y)=0$.
\end{Proposition}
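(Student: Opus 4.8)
The plan is to split $\Pic^{\Strat}(Y)$ along the short exact sequence of Proposition \ref{prop:SESpicstrat} and handle the two factors separately. First I would dispose of $\mathbb{I}(Y)$: the hypothesis $\pi_1^{\ab,(p')}(Y)=0$ means in particular that the maximal abelian pro-$\ell$-quotient $\pi_1(Y)^{\ab,(\ell)}$ vanishes for every prime $\ell\neq p$, so Proposition \ref{prop:globalunits} applies and yields $H^0(Y,\mathcal{O}_Y^\times)=k^\times$, whence $\mathbb{I}(Y)=0$ by Corollary \ref{cor:pdivisible}. By Proposition \ref{prop:SESpicstrat} it then suffices to show $\plim\Pic(Y)=0$; equivalently, that $\Pic(Y)$ admits no nonzero coherent system of $p$-power roots, i.e.\ that $\Hom(\Z[1/p],\Pic(Y))=0$.

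To access $\Pic(Y)$ I would pick a normal projective compactification $\overline{Y}\supseteq Y$ (available by Nagata and normalization, with $Y$ lying in the regular locus). Since $Y$ is smooth, $\Pic(Y)=\Cl(Y)$, and restriction of Weil divisor classes gives a surjection $\Cl(\overline{Y})\twoheadrightarrow\Cl(Y)$ with kernel generated by the finitely many boundary components $D_i$. As $H^0(\overline{Y},\mathcal{O}^\times)=k^\times=H^0(Y,\mathcal{O}_Y^\times)$ (the latter just established), the map from the free abelian group $M:=\bigoplus_i\Z D_i$ into $\Cl(\overline{Y})$ is injective, exactly as in the sequence used in Proposition \ref{prop:descriptionofI}, giving
\[0\rightarrow M\rightarrow\Cl(\overline{Y})\rightarrow\Pic(Y)\rightarrow 0.\]
Applying the long exact $\plim$-sequence and using $\plim M=0$ (Lemma \ref{lemma:R1}, as $M$ is finitely generated and torsion free) reduces the task to showing that $\plim\Cl(\overline{Y})=0$ together with the vanishing of the connecting map $\plim\Pic(Y)\rightarrow R^1\plim M$.

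The hypothesis feeds into $\overline{Y}$ via the surjection $\pi_1^{\ab,(p')}(Y)\twoheadrightarrow\pi_1^{\ab,(p')}(\overline{Y})$, so $\pi_1^{\ab,(p')}(\overline{Y})=0$ as well. By Kummer theory this kills all prime-to-$p$ torsion in $\Pic(\overline{Y})$, and the Albanese argument of Lemma \ref{lemma:pi1compactification} forces the abelian-variety part of $\Pic^0(\overline{Y})$ to be trivial; the vanishing of $\ell$-torsion moreover rules out a toral part, since a torus would contribute nontrivial $\ell$-torsion to $\Pic(\overline{Y})$. Thus $\Pic^0(\overline{Y})_{\red}$ is unipotent, and the line-bundle contribution is harmless for $\plim$: unipotent $k$-points are annihilated by multiplication by $p$ and so invisible to $\plim$, while the finitely generated Néron--Severi-type quotient contributes only prime-to-$p$ torsion, which we have shown vanishes. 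For a \emph{smooth} $\overline{Y}$ this would already finish matters (as in Theorem \ref{thm:rank1}).

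The main obstacle is precisely that $\overline{Y}$ is only normal: $\Cl(\overline{Y})$ can strictly exceed $\Pic(\overline{Y})$, carrying extra divisible classes supported at the singularities (which, although confined to $\overline{Y}\setminus Y$, restrict to possibly nontrivial line bundles on $Y$) that are \emph{not} detected by $\pi_1^{\ab,(p')}(\overline{Y})$. Over a general algebraically closed $k$ such a divisible group is a $\Q$-vector space and would contribute nontrivially to $\plim$. This is exactly where I would invoke the result of Lang pointed out by Conrad: the relevant divisible classes are the points of a connected commutative algebraic group, and over $\overline{\F}_p$ every such point is defined over a finite field and hence torsion, so that after a spreading-out and specialization the divisible part of the class group becomes torsion and the only survivors are prime-to-$p$ torsion classes, already killed by the hypothesis. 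Granting $\plim\Cl(\overline{Y})=0$, a final bookkeeping check that $R^1\plim M\rightarrow R^1\plim\Cl(\overline{Y})$ is injective makes the connecting map vanish and gives $\plim\Pic(Y)=0$, hence $\Pic^{\Strat}(Y)=0$. I expect this class-group step, i.e.\ the correct deployment of Lang's theorem to neutralize the $\Cl$-versus-$\Pic$ discrepancy of a merely normal compactification, to be the heart of the argument.
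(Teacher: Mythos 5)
Your skeleton coincides with the paper's own proof: kill $\mathbb{I}(Y)$ via Proposition \ref{prop:globalunits} and Corollary \ref{cor:pdivisible}, reduce to $\plim\Pic(Y)=0$ via Proposition \ref{prop:SESpicstrat}, pass to a normal projective compactification $\overline{Y}$ with the exact sequence $0\rightarrow M\rightarrow \Cl(\overline{Y})\rightarrow \Pic(Y)\rightarrow 0$ ($M=\bigoplus_i \Z D_i$), and invoke Lang's theorem that $\Cl^{\alg}(\overline{Y})$ is the group of $k$-points of an abelian variety. But the step where you actually neutralize $\Cl^{\alg}(\overline{Y})$ is broken. You propose to reduce to $k=\overline{\F}_p$ by spreading out and specialization, observe that there every point of an algebraic group is torsion, and conclude that the divisible part is harmless for $\plim$. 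Neither half works: the specialization step is unjustified (and would in any case say nothing about the original $k$), and, more seriously, ``torsion'' does not imply ``invisible to $\plim$.'' If $A$ is an abelian variety of dimension $g>0$ over $\overline{\F}_p$, then $A(\overline{\F}_p)$ is torsion but contains $(\Q_\ell/\Z_\ell)^{2g}$ for each $\ell\neq p$, on which multiplication by $p$ is bijective, so $\plim A(\overline{\F}_p)\neq 0$. The correct use of Lang's theorem --- and the paper's --- is a torsion-counting argument valid over any algebraically closed $k$: by Kummer theory on $Y$ (hypothesis plus $H^0(Y,\mathcal{O}_Y^\times)=k^\times$) one has $\Pic(Y)[n]=0$ for all $n$ prime to $p$; hence any prime-to-$p$ torsion class of $\Cl(\overline{Y})$ lies in the image of the free group $M$ (injective into $\Cl(\overline{Y})$, as you noted) and so vanishes; since a positive-dimensional abelian variety has nontrivial $n$-torsion for every $n$ prime to $p$, Lang forces $\Cl^{\alg}(\overline{Y})=0$, whence $\Cl(\overline{Y})=\NS(\overline{Y})$ is finitely generated.

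Your remaining $R^1\plim$ bookkeeping is then both unproven and unnecessary. Granting $\plim\Cl(\overline{Y})=0$, exactness identifies $\plim\Pic(Y)$ with the kernel of $R^1\plim M\rightarrow R^1\plim\Cl(\overline{Y})$, so the injectivity you defer as a ``final check'' is literally equivalent to the conclusion you are trying to prove; it is not a formality one can verify independently of it. The paper avoids this circle entirely: once $\Cl(\overline{Y})$ is finitely generated, so is its quotient $\Pic(Y)$, and Lemma \ref{lemma:R1} gives $\plim\Pic(Y)=\Pic(Y)[p']$, which is $0$ by the Kummer vanishing above. So the two steps you treated as routine --- the deployment of Lang's theorem over a general $k$ and the $R^1\plim$ check --- are precisely where the content of the proof lies, and as written both fail.
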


In the proof of Proposition \ref{prop:rank1absolute} we will use Nagata's
theorem on compactifications (see \cite{Luetkebohmert/Compactification}) to
find a normal projective compactification $\overline{Y}$ of $Y$.  In a
preliminary version of this article we used de Jong's theorem on alterations
(\cite{deJong/Alterations}) to replace $\overline{Y}$ with a nice simplicial
scheme, and then we studied the attached groups of simplicial line bundles.
These arguments were long and technical.

Brian Conrad suggested working directly with the normal projective
compactifications instead of using simplicial techniques: He pointed out a
theorem by Lang stating that if $\overline{Y}$ is projective and normal, then
the group $\Cl^{\alg}(\overline{Y})$ of classes of those Weil divisors modulo linear
equivalence which are algebraically equivalent to $0$, is the group of
$k$-points of an abelian variety; for a modern treatment see
\cite{Gabber/Lang}.

We are grateful to Brian Conrad for bringing this result to our attention.

\begin{proof}[Proof of Proposition \ref{prop:rank1absolute}]
	By Proposition \ref{prop:globalunits}, we know that $\mathbb{I}(Y)=0$.
	Hence, according to Proposition \ref{prop:SESpicstrat}, we only have
	to show that $\plim \Pic(Y)=0$.
	As in \eqref{diag:pic0}, Kummer theory shows that $\Pic(Y)[n]=0$,
	whenever $n$ is prime to $p$. Thus,  Lemma
	\ref{lemma:R1} shows that it suffices to prove that $\Pic(Y)$ is a finitely
	generated group.

	To this end, let $\overline{Y}$ be a normal projective
	compactification of $Y$, and $\Cl(\overline{Y})$ the group of Weil
	divisors on $\overline{Y}$ modulo linear equivalence. We then have a
	surjection $\Cl(\overline{Y})\rightarrow \Pic(Y)$, and a short exact
	sequence
	\begin{equation*}
		\xymatrix{ 
			0\ar[r]&\Cl^{\alg}(\overline{Y})\ar[r]&\Cl(\overline{Y})\ar[r]&\NS(\overline{Y})\ar[r]&0
			 }
		 \end{equation*}
		 with $\NS(\overline{Y})$ finitely generated by
		 \cite[Thm.~3]{Kahn/GroupeDesClasses}. 
		 By the aforementionend
		 result of Lang, $\Cl^{\alg}(\overline{Y})$ is the set of
		 $k$-points of an abelian variety. More precisely, given a point $y\in
		 \overline{Y}(k)$, there is an abelian variety $A$ and a rational
		 map $\alpha_y: \overline{Y}\dashedrightarrow A$ defined
		 around $y$, such that
		 $\alpha_y(y)=0$, and such that every rational map
		 $\beta:Y\dashedrightarrow B$ defined around $y$, with $B$ an abelian variety and
		 $\beta(y)=0$, factors through $\alpha_y$. The abelian variety
		 $A$ is the Albanese
		 variety for rational maps, and its dual has the property that
		 $A^\vee(k)=\Cl^{\alg}(\overline{Y})$. For a modern treatment
		 see \cite{Gabber/Lang}.

		 Now to finish, note that since the kernel of the surjection
		 $\Cl(\overline{Y})\twoheadrightarrow \Pic Y$ is finitely generated,
		 it follows that $\Cl(\overline{Y})[n]=0$ for all but finitely
		 many $n$. This implies that $\Cl^{\alg}(\overline{Y})[n]=0$
		 for all but finitely many $n$, which shows that
		 $\Cl^{\alg}(\overline{Y})=0$, since the underlying abelian
		 variety must have dimension $0$. It follows that
		 $\Cl(\overline{Y})=\NS(\overline{Y})$ is finitely generated
		 and thus that $\Pic(Y)$ is finitely generated.
	 \end{proof}
\section{Special case II: Extensions of stratified bundles of rank $1$}\label{sec:extensions}
We continue to write $k$ for an algebraically closed field  of
characteristic $p>0$.

\begin{Lemma}\label{lemma:extensions}
	Let $X$ be a smooth, connected, separated $k$-scheme of finite type
	which admits a good compactification,
	and
	$\bar{x}$ a geometric point. If
	$\pi^{\tame}_1(X,\bar{x})$ does not have a quotient isomorphic to
	$\Z/p\Z$, then
	$\Ext^1_{\Strat^{\rs}(X)}(\mathcal{O}_X,\mathcal{O}_X)=0$.
\end{Lemma}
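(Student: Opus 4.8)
The plan is to reduce the computation of $\Ext^1_{\Strat^{\rs}(X)}(\mathcal{O}_X,\mathcal{O}_X)$ to an extension computation on a good compactification $\overline{X}$, where properness makes the Katz--Cartier description (Theorem~\ref{thm:cartier}) manageable, and then to identify the resulting group with a Frobenius-invariant piece of $H^1(\overline{X},\mathcal{O}_{\overline{X}})$ that is governed by the $\Z/p\Z$-quotients of $\pi_1(\overline{X})$.

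First I would analyze the exponents. Given a class in $\Ext^1_{\Strat^{\rs}(X)}$, represented by $0\to\mathcal{O}_X\to E\to\mathcal{O}_X\to 0$ with $E$ regular singular, the exponents of $E$ along any boundary component are the exponents of the sub- and quotient objects, all equal to $0$: concretely, on a log-extension the residue operator $\delta^{(1)}$ is upper triangular with zero diagonal, so $\Exp$ is $\{0\}$ in $\Z_p/\Z$. By Proposition~\ref{prop:nonilpotent}, $E$ therefore extends to a stratified bundle $\overline{E}$ on $\overline{X}$, and since the exponent-zero (canonical) extension is functorial and exact, $\overline{E}$ is itself an extension of $\mathcal{O}_{\overline{X}}$ by $\mathcal{O}_{\overline{X}}$ in $\Strat(\overline{X})$. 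As restriction to $X$ is inverse to this construction, it suffices to prove $\Ext^1_{\Strat(\overline{X})}(\mathcal{O}_{\overline{X}},\mathcal{O}_{\overline{X}})=0$.

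Next I would compute the latter group via Theorem~\ref{thm:cartier}. Writing $\overline{E}=(\overline{E}_n,\sigma_n)$, each $\overline{E}_n$ is an extension of $\mathcal{O}_{\overline{X}}$ by $\mathcal{O}_{\overline{X}}$, hence classified by $c_n\in H^1(\overline{X},\mathcal{O}_{\overline{X}})$, and the isomorphisms $\sigma_n\colon\overline{E}_n\xrightarrow{\cong}F^*\overline{E}_{n+1}$ force $c_n=\varphi(c_{n+1})$, where $\varphi$ is the $p$-semilinear map induced on $H^1(\overline{X},\mathcal{O}_{\overline{X}})$ by the absolute Frobenius. Since $\overline{X}$ is proper and connected, $H^0(\overline{X},\mathcal{O}_{\overline{X}})=k$ with bijective (Frobenius) transition maps, so a compatible system of splittings exists as soon as every $c_n$ vanishes; thus $\overline{E}$ splits if and only if $(c_n)=0$, giving an identification $\Ext^1_{\Strat(\overline{X})}(\mathcal{O}_{\overline{X}},\mathcal{O}_{\overline{X}})\cong\varprojlim(H^1(\overline{X},\mathcal{O}_{\overline{X}}),\varphi)$.

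Finally I would evaluate this inverse limit by semilinear algebra. Decomposing the finite-dimensional $k$-vector space $V=H^1(\overline{X},\mathcal{O}_{\overline{X}})$ into its $\varphi$-bijective and $\varphi$-nilpotent parts, the nilpotent part contributes nothing and the limit equals the bijective part $V_s$; over the algebraically closed field $k$ one has $V_s=V^{\varphi}\otimes_{\F_p}k$, and the Artin--Schreier sequence $0\to\F_p\to\mathcal{O}_{\overline{X}}\xrightarrow{F-1}\mathcal{O}_{\overline{X}}\to 0$ identifies $V^{\varphi}=\ker(\varphi-1)$ with $H^1_{\et}(\overline{X},\F_p)=\Hom(\pi_1(\overline{X}),\Z/p\Z)$, the $H^0$-contribution vanishing since $x\mapsto x^p-x$ is surjective on $k$. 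Since every finite \'etale cover of $\overline{X}$ restricts to a tame (indeed unramified) cover of $X$, the group $\pi_1^{\tame}(X)$ surjects onto $\pi_1(\overline{X})$, so the hypothesis that $\pi_1^{\tame}(X)$ has no $\Z/p\Z$-quotient forces $\Hom(\pi_1(\overline{X}),\Z/p\Z)=0$ and hence the vanishing of the inverse limit. The main obstacle is this middle analysis: pinning down the $\Ext$-group over $\overline{X}$ as the Frobenius inverse limit (in particular the splitting criterion through the $H^0$-torsors) and carrying out the Jordan-type decomposition of the semilinear operator $\varphi$ together with the Artin--Schreier identification. By contrast, the reduction to $\overline{X}$ via exponents and the passage from $\pi_1(\overline{X})$ to $\pi_1^{\tame}(X)$ are comparatively routine.
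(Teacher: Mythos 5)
Your proposal has the same skeleton as the paper's proof: show the exponents of $E$ along each boundary component vanish in $\Z_p/\Z$, extend $E$ to a stratified bundle $\overline{E}$ on $\overline{X}$ via Proposition \ref{prop:nonilpotent}, and then use properness and the absence of $\Z/p\Z$-quotients to force triviality. Your last step is correct and is in substance a proof of the result the paper merely cites at this point (\cite[Prop.~2.4]{EsnaultMehta/Gieseker}): identifying $\Ext^1_{\Strat(\overline{X})}(\mathcal{O}_{\overline{X}},\mathcal{O}_{\overline{X}})$ with $\varprojlim\bigl(H^1(\overline{X},\mathcal{O}_{\overline{X}}),\varphi\bigr)$, splitting off the nilpotent part of the $p$-semilinear operator $\varphi$, and using Artin--Schreier to identify $\ker(\varphi-1)$ with $\Hom(\pi_1(\overline{X}),\Z/p\Z)$; the surjection $\pi_1^{\tame}(X)\twoheadrightarrow\pi_1(\overline{X})$ then finishes. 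No objection there.

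The genuine gap is in your first step, the vanishing of the exponents, and it sits exactly at the characteristic-$p$ subtlety the paper is careful about. You argue that ``on a log-extension the residue operator $\delta^{(1)}$ is upper triangular with zero diagonal, so $\Exp$ is $\{0\}$ in $\Z_p/\Z$.'' This is characteristic-zero reasoning and fails on two counts. First, exponents are elements of $\Z_p$ detected by the whole family $\delta^{(p^m)}$, $m\geq 0$ (Proposition \ref{prop:exponents}); the single operator $\delta^{(1)}$ only sees them modulo $p$, so a zero diagonal for $\delta^{(1)}$ would at best give exponents $\equiv 0 \pmod p$, which is far weaker than $0$ in $\Z_p/\Z$ (the paper explicitly warns that exponent~$0$ mod $\Z$ is a much stronger condition in characteristic $p$ than in characteristic $0$). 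Second, there is no reason the diagonal vanishes: an arbitrary log-extension of $E$ need not respect the filtration, so you must first build compatible log-extensions of the sub and the quotient (say $\overline{E}\cap j_*\mathcal{O}_X$ and its torsion-free quotient, where $j:X\hookrightarrow\overline{X}$), then use the semisimplicity of the residue action to see that the exponents of $\overline{E}$ are those of sub plus quotient, and finally the resulting ``diagonal'' exponents are only \emph{integers}, not zero --- by the second part of Proposition \ref{prop:exponents} applied to rank-one log-extensions of $\mathcal{O}_X$; for instance $\mathcal{O}_{\overline{X}}(-D)$ extends $\mathcal{O}_X$ with exponent $1$. None of this is in your sketch, and it is precisely where the paper deploys a different, more robust argument: it passes to $K_{x_0}=\Frac(\widehat{\mathcal{O}_{\overline{X},x_0}})$ at a boundary point, invokes \cite[Thm.~3.3]{Gieseker/FlatBundles} to see that the regular singular module $E\otimes K_{x_0}$ decomposes into rank-one objects (so its monodromy is diagonalizable), and plays this off against the global monodromy group lying in $\G_{a,k}$, which has no nontrivial diagonalizable subgroups. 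A secondary unsupported claim is your appeal to a ``functorial and exact'' exponent-zero canonical extension: Proposition \ref{prop:nonilpotent} gives only existence of some extension, so the assertion that $\overline{E}$ is again an extension of $\mathcal{O}_{\overline{X}}$ by $\mathcal{O}_{\overline{X}}$ in $\Strat(\overline{X})$ --- which your $\Ext^1$-computation on $\overline{X}$ genuinely needs --- requires a further argument (e.g.\ that horizontal sections of a stratified bundle on $\overline{X}$ defined over $X$ extend, since their poles would have order divisible by arbitrarily high powers of $p$), whereas the paper only needs $\overline{E}$ to be trivial as an object and deduces the splitting over $X$ from that.
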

\begin{proof}
	Let $\overline{X}$ be a good compactification of $X$ and $E$ a regular
	singular stratified bundle on $X$, which is an extension of
	$\mathcal{O}_X$ by $\mathcal{O}_X$ in $\Strat^{\rs}(X)$. The aim is to
	show that $E$ extends to a stratified bundle $\overline{E}\in
	\Strat(\overline{X})$; the assumption on $\pi^{\tame}_1(X,\bar{x})$
	implies that $\pi_1(\overline{X},\bar{x})$ does not have a quotient
	isomorphic to $\Z/p\Z$, so  we can
	then apply \cite[Prop.~2.4]{EsnaultMehta/Gieseker} to show that
	$\overline{E}$ is trivial. 
	
	To show that $E$ extends to a stratified bundle $\overline{E}$ on
	$\overline{X}$, by Proposition \ref{prop:nonilpotent} it suffices to show that the
	exponents of $E$ along every component of the boundary divisor
	$\overline{X}\setminus X$ are $0\mod \Z$. Let $x_0\in
	\overline{X}$
	be a closed point, lying on precisely one
	component of $\overline{X}\setminus X$. Write
	$K_{x_0}:=\Frac(\widehat{\mathcal{O}_{\overline{X},x_0}})$. Then, after
	choosing coordinates $x_1,\ldots, x_n$, $K_{x_0}$ is isomorphic to the fraction
	field of the ring of formal
	power series $k\llbracket x_1,\ldots, x_n\rrbracket$. Write
	$\widehat{E}:=E\otimes
	K_{x_0}$. The stratification on $E$ gives $\widehat{E}$ the structure
	of a finite dimensional $K_{x_0}$-vector space with
	an action of the ring of differential operators
	$k[\partial_{x_i}^{(m)}|i=1,\ldots, n, m\geq 0]$, where the usual
	composition rules hold. Such an object is called an iterative
	differential module in \cite{Matzat}.
	The category
	$\Strat(K_{x_0})$ of such objects is
	``almost'' a neutral Tannakian category, but there might not exist a
	$k$-valued fiber functor (for more details see
	\cite[Ch.~1]{Kindler/thesis}). Fortunately, the sub-tensor category
	$\left<\widehat{E}\right>_{\otimes}\subset \Strat(K_{x_0})$ spanned by
	$\widehat{E}$ admits a fiber functor $\omega$ by
	\cite[Cor.~6.20]{Deligne/Festschrift}, as $k$ is algebraically closed.
	Composition of $\omega$ with the restriction functor
	$\left<E\right>_{\otimes}\rightarrow
	\left<\widehat{E}\right>_{\otimes}$ is a fiber functor for
	$\left<E\right>_{\otimes}$, hence we get a morphism
	$G(\left<\widehat{E}\right>_{\otimes})\rightarrow
	G(\left<E\right>_{\otimes})$  of the associated affine $k$-group
	schemes,
	and this morphism is a closed immersion by
	\cite[Prop.~2.21]{DeligneMilne}. 
	
	Since $E$ is an extension of
	$\mathcal{O}_X$ by $\mathcal{O}_X$, its monodromy group
	$G(\left<E\right>_{\otimes})$ is a closed subgroup scheme of
	$\G_{a,k}$. 
	But $E$ is assumed to be regular singular,
	so \cite[Thm.~3.3]{Gieseker/FlatBundles} implies that $\widehat{E}$ is a
	direct sum of rank $1$ objects of $\Strat(K_{x_0})$	and thus  $G(\left<\widehat{E}\right>_{\otimes})$ is a closed
	subgroup scheme of $\G_{m,k}^2$. 	We finally conclude that $\widehat{E}\cong K_{x_0}^2$ as an
	object of $\Strat(K_{x_0})$, since $\G_{a,k}$ does not have nontrivial
	diagonizable subgroups, and then
	\cite[Thm.~3.3]{Gieseker/FlatBundles} implies that the exponents of $E$ along the
	component on which $x_0$ lies are $0 \mod \Z$. We repeat the same
	argument for every component of the boundary $\overline{X}\setminus
	X$, and hence complete the proof.
\end{proof}
We can now prove one of the main results of this article:
\begin{Theorem}\label{thm:abelianQuotient}Let $X$ be a smooth, connected,
	separated $k$-scheme of finite type which admits a good
	compactification, and $\bar{x}$ a geometric point. If
	$\pi_1^{\tame}(X,\bar{x})^{\ab,(p')}=0$, and if
	$\pi_1^{\tame}(X,\bar{x})$ does not have a quotient isomorphic to
	$\Z/p\Z$, then every regular singular stratified bundle on $X$ which
	has abelian monodromy is trivial.
\end{Theorem}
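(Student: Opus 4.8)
The plan is to reduce the triviality of $E$ to the two vanishing results already available, namely $\Pic^{\Strat}(X)=0$ and $\Ext^1_{\Strat^{\rs}(X)}(\mathcal{O}_X,\mathcal{O}_X)=0$, by analyzing the commutative monodromy group $G$ of $E$ through the Tannakian equivalence $\langle E\rangle_\otimes\cong \Repf_k G$. First I would transport the hypothesis to the \'etale fundamental group: since wild inertia is pro-$p$, every prime-to-$p$ cover is tame, so the maximal prime-to-$p$ quotients of $\pi_1^{\et}(X,\bar{x})$ and $\pi_1^{\tame}(X,\bar{x})$ coincide, giving $\pi_1^{\ab,(p')}(X)=\pi_1^{\tame}(X,\bar{x})^{\ab,(p')}=0$. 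Then Proposition \ref{prop:rank1absolute} yields $\Pic^{\Strat}(X)=0$, while Lemma \ref{lemma:extensions} (whose hypothesis on the absence of a $\Z/p\Z$-quotient is exactly the second assumption) gives $\Ext^1_{\Strat^{\rs}(X)}(\mathcal{O}_X,\mathcal{O}_X)=0$. By \cite{DosSantos} the group $G$ is smooth, and as $E$ is regular singular, $\langle E\rangle_\otimes\subset \Strat^{\rs}(X)$ is a neutral Tannakian subcategory with Tannaka group $G$.

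Next I would invoke the structure theory of commutative affine group schemes: over the algebraically closed (hence perfect) field $k$, the smooth commutative group $G$ splits as $G=G_s\times G_u$ with $G_s$ of multiplicative type, hence diagonalizable, and $G_u$ unipotent. The key reduction is to show $G_s=1$. The character group $X^*(G)=\Hom(G,\G_m)=X^*(G_s)$ is precisely the group of rank-$1$ objects of $\Repf_k G\cong \langle E\rangle_\otimes$, and the induced map $X^*(G)\to \Pic^{\Strat}(X)$, $\chi\mapsto [L_\chi]$, is an injective group homomorphism, since non-isomorphic characters correspond to non-isomorphic rank-$1$ stratified bundles. As $\Pic^{\Strat}(X)=0$, this forces $X^*(G)=0$, whence $G_s=1$ and $G=G_u$ is unipotent.

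Finally I would eliminate a nontrivial unipotent part. For a commutative affine group scheme there is the canonical identification $\Ext^1_{\Repf_k G}(\mathbf{1},\mathbf{1})=\Hom(G,\G_a)$: an extension of the unit by itself is a representation by upper unitriangular $2\times 2$ matrices, i.e.\ an additive character. Under $\Repf_k G\cong \langle E\rangle_\otimes$ this group becomes $\Ext^1_{\langle E\rangle_\otimes}(\mathcal{O}_X,\mathcal{O}_X)$; because $\langle E\rangle_\otimes$ is a full subcategory of $\Strat^{\rs}(X)$ containing $\mathcal{O}_X$, any self-extension splitting in $\Strat^{\rs}(X)$ already splits in $\langle E\rangle_\otimes$, so the natural map $\Ext^1_{\langle E\rangle_\otimes}(\mathcal{O}_X,\mathcal{O}_X)\hookrightarrow \Ext^1_{\Strat^{\rs}(X)}(\mathcal{O}_X,\mathcal{O}_X)=0$ is injective. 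Thus $\Hom(G,\G_a)=0$. Since a nontrivial smooth commutative unipotent group over a perfect field admits a surjection onto $\G_a$, this forces $G=G_u=1$. Hence $G$ is trivial and $E\cong\mathcal{O}_X^{\oplus n}$, as desired.

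The geometric substance is entirely concentrated in the two cited inputs; what remains is Tannakian and group-theoretic bookkeeping, whose one genuinely delicate point is the systematic use of the smoothness of $G$ from \cite{DosSantos}. Without smoothness, $G_s$ could contain $\mu_p$ and $G_u$ could contain $\alpha_p$, and then the vanishing of $X^*(G)$ and of $\Hom(G,\G_a)$ would no longer suffice to conclude triviality. I would therefore flag the appeal to smoothness explicitly at both the $G_s$- and the $G_u$-step, as this is the hinge on which the reduction to the rank-$1$ and self-extension calculations turns.
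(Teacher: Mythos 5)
Your argument is correct in substance and rests on exactly the same two geometric inputs as the paper's proof --- Proposition \ref{prop:rank1absolute} and Lemma \ref{lemma:extensions} --- but it runs the Tannakian bookkeeping on the group side where the paper runs it on the category side. The paper's proof is shorter: since the monodromy group is abelian and $k$ is algebraically closed, \cite[9.4]{Waterhouse} gives that every irreducible object of $\left<E\right>_\otimes$ has rank $1$; these are trivial by Proposition \ref{prop:rank1absolute}, and a Jordan--H\"older d\'evissage together with the Ext-vanishing of Lemma \ref{lemma:extensions} then trivializes every object, with no appeal to the structure theory of commutative affine groups. You instead split $G=G_s\times G_u$ over the perfect field $k$, kill $G_s$ via the injection $X^*(G)\hookrightarrow\Pic^{\Strat}(X)=0$, and kill $G_u$ via $\Hom(G,\G_a)=\Ext^1_{\left<E\right>_\otimes}(\mathcal{O}_X,\mathcal{O}_X)\hookrightarrow\Ext^1_{\Strat^{\rs}(X)}(\mathcal{O}_X,\mathcal{O}_X)=0$. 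The two routes are essentially dual, and both identifications you use (characters with rank-$1$ objects, additive characters with self-extensions of the unit) are sound, as is the injectivity of the two comparison maps. Your explicit reduction of the hypothesis $\pi_1^{\tame}(X,\bar{x})^{\ab,(p')}=0$ to $\pi_1^{\ab,(p')}(X)=0$ is a point the paper leaves implicit, and is welcome.

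Two corrections, however. First, the fact you invoke at the last step --- that a nontrivial smooth commutative unipotent group over a perfect field surjects onto $\G_a$ --- is false for disconnected groups: the constant group scheme $\Z/p\Z$ is smooth, commutative and unipotent, but admits no surjection onto $\G_a$. This case is not marginal; a finite \'etale unipotent monodromy group is precisely what the hypothesis forbidding $\Z/p\Z$-quotients of $\pi_1^{\tame}$ must rule out, so your argument as written does not exclude it. What you need, and what is true, is that every nontrivial unipotent affine group scheme admits a \emph{nonzero} homomorphism to $\G_a$: for $\Z/p\Z$ this is the Artin--Schreier embedding $\Z/p\Z\hookrightarrow\G_a$, and in general a unipotent group scheme has a normal series whose successive quotients embed as subgroups of $\G_a$ (see \cite{Waterhouse}), so its top nontrivial quotient supplies such a homomorphism. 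With this replacement your final step is correct. Second, your closing claim that smoothness of $G$ is the hinge of the argument has it backwards: $X^*(\mu_p)\neq 0$ and $\Hom(\alpha_p,\G_a)\neq 0$, so the vanishing of characters and of additive characters excludes $\mu_p$ and $\alpha_p$ just as well, and the decomposition $G=G_s\times G_u$ holds for every affine commutative algebraic group over a perfect field, smooth or not. The smoothness result of \cite{DosSantos} is harmless to quote, but neither your argument nor the paper's actually turns on it.
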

\begin{proof}
	Let $E$ be a regular singular stratified bundle on $X$, and
	$\omega:\left<E\right>_{\otimes}\rightarrow \Vectf_k$ a fiber functor.
	Assume that the associated $k$-group scheme
	$G(\left<E\right>_{\otimes},\omega)$ is abelian. 
	By \cite[9.4]{Waterhouse} every irreducible object of
	$\left<E\right>_{\otimes}$ has rank $1$. By Proposition
	\ref{prop:rank1absolute} every rank $1$ object of $\Strat(X)$ is
	trivial, and by Lemma \ref{lemma:extensions} there are no nontrivial
	extensions between trivial objects of rank $1$. It follows that every
	object of $\left<E\right>_{\otimes}$ is trivial.\end{proof}

\section{Special case III: Affine spaces}\label{sec:affinespaces}
We continue to denote by $k$ an algebraically closed field of characteristic
$p>0$.  It is well known  that $\pi_1^{\tame}(\A^n_k)=0$ for all $n \geq 0$.
\begin{Theorem}\label{thm:affinespaces}
	Every regular singular stratified bundle on $\A^n_k$ is trivial.
\end{Theorem}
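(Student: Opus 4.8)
The plan is to deduce Theorem \ref{thm:affinespaces} from the main results already established, using the fact that $\A^n_k$ is the prototypical example of a variety satisfying the hypotheses of Theorem \ref{thm:abelianQuotient}. First I would verify the hypotheses. Since $\pi_1^{\tame}(\A^n_k)=0$, certainly $\pi_1^{\tame}(\A^n_k)^{\ab,(p')}=0$ and $\pi_1^{\tame}(\A^n_k)$ has no quotient isomorphic to $\Z/p\Z$; moreover $\A^n_k$ admits the good compactification $\P^n_k$ (the boundary $\P^n_k\setminus \A^n_k$ is a hyperplane, hence a smooth, in particular strict normal crossings, divisor). Thus Theorem \ref{thm:abelianQuotient} applies directly and tells us that every regular singular stratified bundle on $\A^n_k$ with \emph{abelian} monodromy is trivial. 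The real content of Theorem \ref{thm:affinespaces} is removing the abelian hypothesis, so the crux is a reduction from the general monodromy group to the abelian case.

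The key step is a Tannakian/group-theoretic reduction. Let $E$ be a regular singular stratified bundle on $\A^n_k$ with monodromy group $G:=G(\langle E\rangle_\otimes,\omega)$, which is a smooth affine $k$-group scheme by \cite{DosSantos}. I would argue by induction on $\dim G$, or equivalently on the length of a composition series, that $G$ must be trivial. If $G$ is trivial we are done. Otherwise, since $G$ is a nontrivial smooth affine group scheme, it admits a nontrivial \emph{abelian} quotient: for instance, either the abelianization $G^{\ab}=G/[G,G]$ is nontrivial, or $G$ is perfect. The hoped-for mechanism is that a nontrivial abelian quotient $G\twoheadrightarrow Q$ corresponds to a nontrivial stratified subquotient of a tensor construction on $E$ whose monodromy group is the abelian group $Q$; this subquotient is again regular singular by Proposition \ref{prop:rsfacts}\ref{rsfacts3} (the category $\Strat^{\rs}$ is closed under subquotients inside $\langle E\rangle_\otimes$), so Theorem \ref{thm:abelianQuotient} forces it to be trivial, i.e.\ $Q=0$, a contradiction. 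This would rule out any nontrivial abelian quotient of $G$.

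The main obstacle is the perfect case: the argument above only kills abelian quotients, but a nontrivial smooth affine group can in principle be perfect (equal to its own commutator subgroup), and then it has no nontrivial abelian quotient to feed into Theorem \ref{thm:abelianQuotient}. To handle this I would invoke the structure theory of $\Strat^{\rs}(\A^n_k)$ as a Tannakian category together with the triviality of $\pi_1^{\tame}(\A^n_k)$: by the main result of \cite{Kindler/FiniteBundles}, regular singular stratified bundles with \emph{finite} monodromy correspond to $\Repf_k^{\cont}\pi_1^{\tame}(\A^n_k)=\Repf_k^{\cont}(0)$, so there are no nontrivial such bundles, meaning $G$ has no nontrivial finite quotient; combined with smoothness and connectedness considerations this should let me bootstrap from the abelian case. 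Concretely, I expect the cleanest route is to show $G$ is both connected (no nontrivial finite étale quotients, from the vanishing of $\pi_1^{\tame}$) and unipotent or solvable by a dimension/Frobenius argument specific to $\A^n$, after which the abelianization is automatically nontrivial whenever $G\neq 1$ and the reduction closes. The delicate point to get right is ensuring that the relevant subquotients remain regular singular and that the correspondence between quotients of $G$ and subobjects in $\langle E\rangle_\otimes$ is applied correctly; once that bookkeeping is in place, the theorem follows by the inductive descent on $\dim G$.
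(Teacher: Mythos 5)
There is a genuine gap, and you have correctly located it yourself: the perfect case. Theorem \ref{thm:abelianQuotient} only controls \emph{abelian} quotients of the monodromy group; equivalently, it says that the abelianization of the pro-algebraic group attached to $\Strat^{\rs}(\A^n_k)$ vanishes. Your reduction kills every abelian quotient $Q$ of $G$ (that part of the bookkeeping is fine: a faithful representation of $Q$ gives an object of $\left<E\right>_{\otimes}$ with monodromy $Q$, regular singular by Proposition \ref{prop:rsfacts}\ref{rsfacts3}, hence trivial by Theorem \ref{thm:abelianQuotient}), but this says nothing about a connected perfect group such as $\mathrm{SL}_2$. Your other constraint --- no nontrivial finite quotients, from $\pi_1^{\tame}(\A^n_k)=0$ and the main result of \cite{Kindler/FiniteBundles} --- is also vacuous here: a finite quotient of a connected group is automatically trivial, so it yields connectedness of $G$ and nothing more. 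Thus nothing in the proposal rules out, say, a regular singular bundle on $\A^n_k$ with monodromy $\mathrm{SL}_2$. The step you defer to ``a dimension/Frobenius argument specific to $\A^n$'' showing $G$ is solvable or unipotent is not an argument; it is essentially the whole theorem. Indeed, solvability (really: diagonalizability) of the monodromy is exactly the content of Gieseker's Theorem 5.3 (every regular singular stratified bundle on $\P^n_k\setminus D$ is a direct sum of stratified line bundles), which the paper explicitly declines to use because it is imprecisely stated and its proof is very complicated. So the reduction to the abelian case cannot be closed by the tools available in the paper.

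For comparison, the paper's proof does not pass through monodromy groups at all. It argues by induction on $n$: compactify $\A^n_k$ inside $(\P^1_k)^n$, compute the exponents of $E$ along a boundary divisor $(\P^1_k)^{n-1}\times\{\infty\}$, and observe that these exponents are also exponents of the restriction of $E$ to lines $(a_1,\ldots,a_{n-1})\times\A^1_k$; the case $n=1$ (which follows from \cite[Prop.~4.2]{Gieseker/FlatBundles} together with Proposition \ref{prop:rank1absolute}) forces all exponents to be $0$ in $\Z_p/\Z$. Then Proposition \ref{prop:nonilpotent} lets one extend $E$ to an honest stratified bundle across the boundary, first off codimension $\geq 2$ and then everywhere by \cite[Lemma 2.5]{Kindler/FiniteBundles}, and triviality follows from the projective case since $(\P^1_k)^n$ is birational to $\P^n_k$ (\cite[Thm.~2.2]{Gieseker/FlatBundles}). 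In other words, the paper trades the group-theoretic reduction you attempt for an exponent computation that reduces the open case to the proper case, where the hard input (triviality of stratified bundles on varieties birational to $\P^n_k$) is already known.
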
 
\begin{Remark}\leavevmode
\begin{itemize} 
	\item A slightly different approach to Theorem \ref{thm:affinespaces}
		was  sketched in \cite[4.4]{Esnault/ECM} 
	\item Note that Theorem \ref{thm:affinespaces} follows directly
		from Proposition \ref{prop:rank1absolute} and
		\cite[Thm.~5.3]{Gieseker/FlatBundles}, which states that every regular
		singular stratified bundle on $\P^n_k\setminus D$ is a direct
		sum of stratified line bundles, if $D$ is a strict normal
		crossings divisor.  Unfortunately
		[loc.~cit.] is imprecisely stated
		(it is false for $n=1$) and its proof is very complicated. 
       % It
       % 	claims that if $D_i$, $i=1,\ldots, r$ are smooth divisors in
       % 	$\P^n_k$ and $D:=\sum_{i=1}^r D_i\subset \mathbb{P}^n_k$ a
       % 	strict normal crossing divisor, then every regular singular
       % 	stratified bundle on $\P^{n}_k\setminus D$ is a direct sum of
       % 	stratified line bundles. This is not true, e.g.~for $n=1$,
       % 	$D=\{0,1,\infty\}$, as there are non-abelian tame coverings of
       % 	$\P^1_k\setminus \{0,1,\infty\}$. For $n>1$ there is no
       % 	counter example with finite monodromy, but the proof of
       % 	\cite[Thm.~5.3]{Gieseker/FlatBundles} is still very
       % 	complicated.
	       % This cannot be true: There exist
	       % $n,D$ such that $\pi_1^{\tame}(\P^n_k\setminus D)$ is
	       % nonabelian, for example $D=0+1+\infty\subset \P^1_k$. Hence,
	       % if $f:Y\rightarrow \P^n_k\setminus D$ is a tame Galois
	       % covering (for example of degree prime to $p$) with nonabelian
	       % Galois group, then $f_*\mathcal{O}_Y$ is a regular singular
	       % stratified bundle with nonabelian monodromy group and hence not a direct sum of
	       % rank $1$ stratified bundles.

		Below we give a simple argument to prove Theorem
		\ref{thm:affinespaces},
		which is certainly implicitly
		contained in \cite{Gieseker/FlatBundles}.
\end{itemize}
\end{Remark}

\begin{proof}[Proof of Theorem \ref{thm:affinespaces}]
	The case of $\A^1_k$ follows from 
	\cite[Prop.~4.2]{Gieseker/FlatBundles} and Proposition
	\ref{prop:rank1absolute}. 
       % But it is also easy to compute
       % that if $E$ is a regular singular stratified bundle on $\G_m$, with
       % exponent $\alpha \mod \Z$ along $0$, then $-\alpha\mod \Z$ also is an
       % exponent along $\infty$. 
%	Thus every regular singular trivial stratified bundle on
%	$\A^1_k$ is trivial.

		We proceed by induction; let $n>1$.  Then the $n$-fold product
		$\P^1_k\times_k\ldots\times_k \P^1_k$ is a good
		compactification of $\A^n_k$, and if $E$ is a stratified
		bundle on $\A^n_k$, then $E$ is regular singular if and only
		if it is $(\A^n_k,(\P^1_k)^n)$-regular singular by Proposition
		\ref{prop:rsfacts}. 

		We
		compute
		the exponents of $E$ along the divisor
		$\left(\P^1_{k}\right)^{n-1}\times_k \{\infty\}\subset
		(\P^1_k)^n$. To this end let $\overline{E}$ be a free
		$\mathcal{O}_{\A^{n-1}_k\times (\P^1_{k}\setminus\{0\})}$-module
		with $\mathscr{D}_{\A^{n-1}_{k}\times (\P^1_{k}\setminus
		\{0\})/k}(\log \A^{n-1}_{k}\times\{\infty\})$-action extending
		$E|_{\A^{n-1}_k\times \G_m}$.
		Choose coordinates $x_1,\ldots, x_n$ such that
		$\A^{n-1}_k\times_k(\P^1_k\setminus \{0\})=\Spec
		k[x_1,\ldots, x_{n-1}, x^{-1}_n]$.  By Proposition
		\ref{prop:exponents} there exists a basis $e_1,\ldots, e_r$ of
		the free module $\overline{E}$, such that
			\[\delta_{x_n^{-1}}^{(m)}(e_i)=\binom{\alpha_i}{m}e_i+x_n^{-1}\overline{E}\]
		with $\alpha_i\in \Z_p$ an exponent of $\overline{E}$ along
		$\A^{n-1}_k\times \{\infty\}$.  But the same equation also
		holds modulo a prime ideal $(x_1-a_1,\ldots, x_{n-1}-a_{n-1})$,
		$a_1,\ldots, a_{n-1}\in k$, so $\alpha_i\mod \Z$ is an
		exponent of the fiber $E|_{(a_1,\ldots, a_{n-1})\times \A^1_k}$ along the
		divisor $(a_1,\ldots,a_{n-1},\infty)\subset (a_1,\ldots,
		a_{n-1})\times \P^1_k$. 

		The case $n=1$ now shows that $\alpha_i\equiv 0\mod \Z$, and
		hence the  exponents of $E$  along
		$\A^{n-1}_k\times_k \{\infty\}$ are $0\mod \Z$. By 
		Proposition \ref{prop:nonilpotent} this means that $E$ extends to an
		actual stratified bundle on $\A^{n-1}_k\times_k \P^1$.

		But now we are done: The above argument shows that $E$ extends
		to a stratified bundle on $(\P^1_k)^n$: First to $(\P^1_k)^n$
		minus a closed subset of codimension $\geq 2$, and then by
		\cite[Lemma 2.5]{Kindler/FiniteBundles} to $(\P^1_k)^n$.  But
		there are only trivial stratified bundles on $(\P^1_k)^n$, as
		it it is birational to $\P^n_k$, so $E$ is trivial (\cite[Thm.~2.2]{Gieseker/FlatBundles}).
\end{proof}
\section{Special case IV: Universal homeomorphisms}\label{sec:univHomeo}
We continue to denote by $k$ an algebraically closed
field of characteristic $p>0$.  Recall that by
\cite[18.12.11]{EGA4}, a finite type morphism
$f:Y\rightarrow X$ of finite type $k$-schemes is a
universal homeomorphism if and only if it is finite,
purely inseparable (i.e.~universally injective) and
surjective. It is proven in \cite[IX.4.10]{SGA1} that
$f$ induces an isomorphism
$\pi^{\et}_1(Y)\xrightarrow{\cong}\pi^{\et}_1(X)$ (with
appropriate choices of base points); it follows from
\cite{Vidal/TopologicalInvariance} that the same is
true for $\pi_1^{\tame}$.  In this section we prove
that pull-back along $f$ is an equivalence
$\Strat^{\rs}(X)\rightarrow \Strat^{\rs}(Y)$.

For a $k$-scheme $X$, we write $X^{(n)}$ for the base change of $X$ along the
$n$-th power of the absolute Frobenius of $k$, and by
$F_{X/k}^{(n)}:X\rightarrow X^{(n)}$ the associated $k$-linear relative
Frobenius.  It follows from Theorem \ref{thm:cartier} that pull-back along
$F^{(n)}_{X/k}$ induces an equivalence of categories
$\Strat(X^{(n)})\rightarrow \Strat(X)$.  This remains true in the regular
singular case: We first work with respect to one fixed good partial
compactification $(X,\overline{X})$.

\begin{Proposition}[``Frobenius descent'']\label{prop:frobeniusPullbackEquivalencePartialCompactification}
	If $(X,\overline{X})$ is a good partial compactification, then the
	pair $(X^{(n)},\overline{X}^{(n)})$ also is a good partial
	compactification, and $F_{X/k}^{(n)}$ induces an equivalence \[
		(F_{X/k}^{(n)})^*: \Strat^{\rs}(
		(X^{(n)},\overline{X}^{(n)}))\rightarrow \Strat^{\rs}(
		(X,\overline{X})).\]
\end{Proposition}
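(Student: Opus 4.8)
The plan is to leverage the equivalence $(F^{(n)}_{X/k})^*\colon \Strat(X^{(n)})\rightarrow \Strat(X)$ recorded above (a consequence of Theorem \ref{thm:cartier}) and to upgrade it to the regular singular subcategories. Since $\Strat^{\rs}((X^{(n)},\overline{X}^{(n)}))$ and $\Strat^{\rs}((X,\overline{X}))$ are by definition \emph{full} subcategories, fullness and faithfulness of the restricted functor are automatic, and the only thing to prove is the equivalence of conditions
\[ E \text{ is } (X^{(n)},\overline{X}^{(n)})\text{-reg. sing.} \iff (F^{(n)}_{X/k})^*E \text{ is } (X,\overline{X})\text{-reg. sing.}; \]
this simultaneously shows that the functor lands in the regular singular subcategory and is essentially surjective onto it. The preliminary claim that $(X^{(n)},\overline{X}^{(n)})$ is again a good partial compactification is immediate: since $k$ is perfect, the $n$-th Frobenius of $\Spec k$ is an isomorphism, so $X^{(n)}\hookrightarrow \overline{X}^{(n)}$ arises from $X\hookrightarrow \overline{X}$ by base change along an isomorphism of the base, and smoothness, separatedness, finite type and the strict normal crossings condition on the boundary are all preserved.

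First I would note that the relative Frobenius extends to a morphism $F^{(n)}_{\overline{X}/k}\colon \overline{X}\rightarrow \overline{X}^{(n)}$, finite and faithfully flat with $\overline{X}$ smooth, which carries $\overline{X}^{(n)}\setminus X^{(n)}$ back to a divisor with the same support as $\overline{X}\setminus X$ (in local coordinates $F^*$ raises each defining function to its $p^n$-th power). Thus $F^{(n)}_{\overline{X}/k}$ is a morphism of the associated log schemes, and the heart of the argument is a logarithmic version of Cartier/Frobenius descent: pull-back along $F^{(n)}_{\overline{X}/k}$ induces an equivalence between the categories of $\mathcal{O}$-coherent, $\mathcal{O}$-torsion free $\mathscr{D}_{\overline{X}^{(n)}/k}(\log \overline{X}^{(n)}\setminus X^{(n)})$-modules and of $\mathcal{O}$-coherent, $\mathcal{O}$-torsion free $\mathscr{D}_{\overline{X}/k}(\log \overline{X}\setminus X)$-modules. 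I would prove this locally, in coordinates $x_1,\ldots,x_n$ as in Definition \ref{defn:diffops}, by checking that the pull-back of the $\mathscr{D}(\log)$-action along $x_i^{(n)}\mapsto x_i^{p^n}$ is well defined on the generators $\delta^{(m)}_{x_i}$, $\partial^{(m)}_{x_j}$ and that the descent functor $\overline{E}\mapsto \overline{E}^{\nabla}$ inverts it; alternatively the equivalence can be extracted from the theory of log differential operators of \cite{Montagnon}. Coherence and torsion-freeness are preserved in both directions because $F^{(n)}_{\overline{X}/k}$ is finite and faithfully flat between integral schemes (the associated points of a flat pull-back lie over those of the source, and faithful flatness detects torsion).

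Crucially, this log descent is compatible, via restriction along $X\hookrightarrow \overline{X}$ and $X^{(n)}\hookrightarrow \overline{X}^{(n)}$, with the non-logarithmic Frobenius descent $\Strat(X^{(n)})\cong \Strat(X)$, since restriction commutes with pull-back. With this in hand the equivalence of conditions follows. For the forward implication, if $\overline{E}$ is an $\mathcal{O}$-coherent, torsion free $\mathscr{D}(\log)$-module on $\overline{X}^{(n)}$ extending $E$, then $(F^{(n)}_{\overline{X}/k})^*\overline{E}$ is such a module on $\overline{X}$ restricting to $(F^{(n)}_{X/k})^*E$, so the latter is $(X,\overline{X})$-regular singular. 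For the reverse, if $(F^{(n)}_{X/k})^*E$ extends to an $\mathcal{O}$-coherent, torsion free $\mathscr{D}(\log)$-module $\overline{G}$ on $\overline{X}$, essential surjectivity of the log descent yields $\overline{E}'$ on $\overline{X}^{(n)}$ with $(F^{(n)}_{\overline{X}/k})^*\overline{E}'\cong \overline{G}$; restricting to $X^{(n)}$ and using compatibility with the non-log descent together with faithfulness identifies $\overline{E}'|_{X^{(n)}}$ with $E$, so $E$ is $(X^{(n)},\overline{X}^{(n)})$-regular singular.

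The main obstacle is the logarithmic Frobenius descent itself: unlike the classical Cartier descent for the trivial log structure, one must track the operators $\delta^{(m)}_{x_i}=x_i^m\partial^{(m)}_{x_i}$ along the boundary and verify that the descent produces an action of the full ring $\mathscr{D}_{\overline{X}^{(n)}/k}(\log)$, not merely of its interior part, while staying within the class of $\mathcal{O}$-coherent torsion free (as opposed to locally free) modules. Once the local computation is carried out and glued, the remaining steps are formal.
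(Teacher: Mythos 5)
Your reduction to the two implications, your handling of fullness/faithfulness, and your forward direction (pulling back a log-extension along $F_{\overline{X}/k}$) are all fine, but the central lemma on which your reverse implication rests --- that pull-back along $F^{(n)}_{\overline{X}/k}$ is an \emph{equivalence} between the categories of $\mathcal{O}$-coherent, torsion free $\mathscr{D}(\log)$-modules on $\overline{X}^{(n)}$ and on $\overline{X}$ --- is false, and the paper says so explicitly in the remark immediately following this proposition: naive Cartier/Frobenius descent fails for logarithmic connections. The obstruction is visible in the exponents: by Corollary \ref{cor:frobMultipliesExponents}, Frobenius pull-back multiplies all exponents by $p$, so a coherent torsion free $\mathscr{D}_{\overline{X}/k}(\log D)$-module having an exponent outside $p\Z_p$ cannot lie in the essential image. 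A concrete example is $\mathcal{O}_{\overline{X}}(-D)$ with its natural log-$\mathscr{D}$-action: it has exponent $1$ along $D$, hence is the pull-back of no $\mathscr{D}_{\overline{X}^{(1)}/k}(\log D^{(1)})$-module, even though its restriction to $X$ is the trivial stratified bundle (which of course descends). This defeats your reverse implication as written: the extension $\overline{G}$ of $(F^{(n)}_{X/k})^*E$ that you feed into ``essential surjectivity of the log descent'' may be exactly such a non-descendable module (take $E$ trivial and $\overline{G}=\mathcal{O}_{\overline{X}}(-D)$). Citing \cite{Montagnon} does not repair this: as the paper's remark notes, the Lorenzon--Montagnon logarithmic Frobenius descent only holds after enlarging the rings of coefficients, not for $\mathscr{D}(\log)$-modules in the form you need.

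What the paper does instead avoids any descent theorem for log modules. Given $E$ on $X^{(1)}$ with $F_{X/k}^*E$ regular singular, it chooses a torsion free coherent extension $E'$ of $E$ to $\overline{X}^{(1)}$, forms the $\mathscr{D}_{\overline{X}^{(1)}/k}(\log D^{(1)})$-module $\overline{E}$ generated by $E'$ inside $j^{(1)}_*E$, and shows \emph{this specific} module is coherent. By faithful flatness of $F_{\overline{X}/k}$, coherence of $\overline{E}$ follows from coherence of $F_{\overline{X}/k}^*\overline{E}$, which is identified with the $\mathscr{D}_{\overline{X}/k}(\log D)$-module $G$ generated by $F_{\overline{X}/k}^*E'$ in $j_*F_{X/k}^*E$; and $G$ is coherent precisely because $F_{X/k}^*E$ is regular singular. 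The identification $G=F_{\overline{X}/k}^*\overline{E}$ is where the real content lies: it reduces to surjectivity of the canonical map $\gamma:\mathscr{D}_{\overline{X}/k}(\log D)\rightarrow F^*_{\overline{X}/k}\mathscr{D}_{\overline{X}^{(1)}/k}(\log D^{(1)})$, proved by the local computation $\gamma(\delta^{(p^m)}_{t_i})=\delta^{(p^{m-1})}_{t_i\otimes 1}$ (and $\gamma(\delta^{(1)}_{t_i})=0$) via the congruence $\binom{sp}{p^m}\equiv\binom{s}{p^{m-1}}\bmod p$. In other words, the comparison only goes ``one way'' --- generated extensions match under pull-back --- and that one-way statement is both true and sufficient; the two-way equivalence you posited is not.
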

\begin{proof}
	We may assume that $n=1$, and write $F_{X/k}=F^{(1)}_{X/k}$.

	Write $D:=\overline{X}\setminus X$ and
	$D^{(1)}:=\overline{X}^{(1)}\setminus X^{(1)}$.  Since the functor
	$F_{X/k}^*:\Strat(X^{(1)})\rightarrow \Strat(X)$ is an equivalence, it
	suffices to show that the essential image of $\Strat^{\rs}(
	(X^{(1)},\overline{X}^{(1)}))$ in $\Strat(X)$ is $\Strat^{\rs}(
	(X,\overline{X}))$, i.e.~it suffices to show that a stratified bundle
	$E$ on $X^{(1)}$ is $(X^{(1)},\overline{X}^{(1)})$-regular singular if
	$F_{X/k}^*E$ is $(X,\overline{X})$-regular singular.

	Let $j:X\hookrightarrow \overline{X}$ denote the inclusion.  Assume
	that $F_{X/k}^*E$ is $(X,\overline{X})$-regular singular.  Let $E'$ be
	any torsion free coherent extension of $E$ to $\overline{X}^{(1)}$ and
	$\overline{E}$ the $\mathscr{D}_{\overline{X}^{(1)}/k}(\log
	D^{(1)})$-module generated by ${E'}$ in the
	$\mathscr{D}_{\overline{X}^{(1)}/k}(\log \mathscr{D}^{(1)})$-module
	$j^{(1)}_*E$.  We need to show that $\overline{E}$ is
	$\mathcal{O}_{\overline{X}^{(1)}}$-coherent.  Since
	$F_{\overline{X}/k}$ is faithfully flat, it suffices to show that
	$F_{\overline{X}/k}^*\overline{E}$ is
	$\mathcal{O}_{\overline{X}}$-coherent, \cite[Prop.~VIII.1.10]{SGA1}.
	Define $G$ to be the $\mathscr{D}_{\overline{X}/k}(\log D)$-module
	generated by $F_{\overline{X}/k}^*E'$ in
	$j_*F_{X/k}^*E=F^*_{\overline{X}/k}j^{(1)}_* E$.  Then $G$ is
	$\mathcal{O}_{\overline{X}}$-coherent by assumption, so the proof is
	complete if we can show that $G=F_{\overline{X}/k}^*\overline{E}$.

	The $\mathcal{O}_{\overline{X}^{(1)}}$-module $j^{(1)}_*E$ naturally
	carries a $\mathscr{D}_{\overline{X}^{(1)}/k}(\log D^{(1)})$-action,
	and similarly for $j_*F_{X/k}^*E$.  Then we can describe
	$F_{\overline{X}/k}^{*}\overline{E}$ as the image of the (pulled-back)
	evaluation morphism
	\[F^*_{\overline{X}/k}\left(\mathscr{D}_{\overline{X}^{(1)}/k}(\log
		D^{(1)})\otimes_{\mathcal{O}_{\overline{X}^{(1)}}}
		E'\right)\rightarrow
		F_{\overline{X}/k}^{*}j^{(1)}_*
		E,\]
	and $G$ as the image of the evaluation morphism
	\[
		\mathscr{D}_{\overline{X}/k}(\log
		D)\otimes_{\mathcal{O}_{\overline{X}}}
		F^*_{\overline{X}/k}E'\rightarrow
		j_*
		F_{X/k}^*E=F_{\overline{X}/k}^*j^{(1)}_*E.\]
	These morphisms fit in a commutative diagram (writing
	$F=F_{\overline{X}/k}$ for legibility):
	\begin{equation*}
		\xymatrix{
			F^*\left(\mathscr{D}_{\overline{X}^{(1)}/k}(\log
			D^{(1)})\otimes_{\mathcal{O}_{\overline{X}^{(1)}}}
			E'\right)\ar[r]&
			F^{*}j^{(1)}_*
			E\ar@{=}[ddd]\\
			F^*\mathscr{D}_{\overline{X}^{(1)}/k}(\log
			{D}^{(1)})\otimes_{F^{-1}\mathcal{O}_{\overline{X}^{(1)}}}F^{-1}
			E'\ar@{=}[u]\\
			\ar[u]^{\gamma\otimes
			\id}\mathscr{D}_{\overline{X}/k}(\log
			D^{(1)})\otimes_{F^{-1}\mathcal{O}_{\overline{X}^{(1)}}}F^{-1}E'\ar@{=}[d]\\
%	F^*_{\overline{X}/k}\mathscr{D}_{\overline{X}^{(1)}/k}(\log
%	{D}^{(1)})\otimes_{\mathcal{O}_{\overline{X}}}
%	F^*_{\overline{X}/k}E'\ar@{=}[u]\\
			\mathscr{D}_{\overline{X}/k}(\log
			D)\otimes_{\mathcal{O}_{\overline{X}}}
			F^*E'\ar[r]&
			j_*
			F^*E,
		}
	\end{equation*}
	where \[\gamma:\mathscr{D}_{\overline{X}/k}(\log D)\rightarrow
	F^*_{\overline{X}/k}\mathscr{D}_{\overline{X}^{(1)}/k}(\log
	D^{(1)})=\mathcal{O}_{\overline{X}}\otimes_{F^{-1}\mathcal{O}_{\overline{X}^{(1)}}}F^{-1}\mathscr{D}_{\overline{X}^{(1)}/k}(\log
	D^{(1)})\]
	is the canonical morphism coming via restriction from the morphism
	$\mathscr{D}_{\overline{X}/k}\rightarrow
	F^*_{\overline{X}/k}\mathscr{D}_{\overline{X}^{(1)}/k}$.  It follows
	that $G=F_{\overline{X}/k}^*\overline{E}$, if $\gamma$ is surjective. 

	This is a local question, so we may assume that $\overline{X}=\Spec A$
	and $X=\Spec A[t_1^{-1}]$, with $t_1,\ldots, t_r$ local coordinates on
	$A$.  Then $\overline{X}^{(1)}=\Spec A\otimes_{F_k} k$, and
	$t_1\otimes 1,\ldots, t_r\otimes 1$ is a system of local coordinates
	for $A\otimes_{F_k} k$.  The relative frobenius $F_{X/k}$ then maps
	$t_i\otimes 1\mapsto t_i^p$, for $i=1,\ldots, r$.

	With the notation
	$\delta_{t_i}^{(p^m)}$ from Definition \ref{defn:diffops},
	\ref{item:diffops}, and recalling that $\delta_{t_i}^{(p^m)}$ ``behaves
	like'' $\frac{t_i^{p^m}}{p^m!}\partial^{p^m}/\partial
		t_i^{p^m}$,
		we claim that $\gamma(\delta^{(1)}_{t_i})=0$ and 
	\begin{equation}\label{eqn:frobOps}
		\gamma(\delta_{t_i}^{(p^m)}) = \delta^{(p^{m-1})}_{t_i\otimes
		1} \text{ for } m>1,
	\end{equation}
	which shows that the image of $\gamma$ contains all
	the generators of the
	left-$\mathcal{O}_{\overline{X}}$-algebra
	$F_{X/k}^*\mathscr{D}_{\overline{X}^{(1)}/k}(\log
	D^{(1)})$, and thus that $\gamma$ is surjective.  As
	for the claim, it suffices to observe that for $s\geq
	0$,
	\[\delta^{(p^m)}_{t_i}(
		(t_j^p)^{s})=
		\begin{cases}
			0
			&
			i\neq
			j\\
			\binom{sp}{p^m}&\text{otherwise},
		\end{cases}
		\]
	and finally that
	\[\binom{sp}{p^m}\equiv\binom{s}{p^{m-1}}\mod
			p.\]
\end{proof}
\begin{Remark}
	This proof does \emph{not} show that all
	$\mathcal{O}_{\overline{X}}$-coherent
	$\mathscr{D}_{\overline{X}/k}(\log D)$-module descend to
	$\mathcal{O}_{\overline{X}^{(1)}}$-coherent
	$\mathscr{D}_{\overline{X}^{(1)}/k}(\log D^{(1)})$-modules.
	In fact, such a statement is
	false, due to the failure of Cartier's theorem \cite[\S
	5.]{Katz/Connections} for logarithmic connections. 
	On the other hand,  in 
	\cite{Lorenzon}  a version of Cartier's theorem for
	$\log$-schemes is developed, which is applied in \cite[Ch.~4]{Montagnon} to
	construct a generalization of Frobenius descent to the logarithmic
	setting. For this however, the rings of coefficients have to
	be enlarged.
\end{Remark}
\begin{Corollary}\label{cor:frobMultipliesExponents}
	With the notations from Proposition
	\ref{prop:frobeniusPullbackEquivalencePartialCompactification},
	if $\overline{E}$ is a locally free
	$\mathcal{O}_{\overline{X}^{(1)}}$-coherent
	$\mathscr{D}_{\overline{X}^{(1)}/k}(\log D^{(1)})$-module with
	exponents $\alpha_1,\ldots, \alpha_n\in \Z_p$, then
	$F^*_{\overline{X}/k}\overline{E}$ is a
	$\mathscr{D}_{\overline{X}/k}(\log D)$-module with exponents
	$p\alpha_1,\ldots, p\alpha_n$.

	Consequently, if $E\in \Strat^{\rs}(
	(X^{(1)},\overline{X}^{(1)}))$, with exponents
	$\alpha_1,\ldots, \alpha_n\in \Z_p/\Z$, then $F_{X/k}^{*}E$
	has exponents $p\alpha_1,\ldots, p\alpha_n\in \Z_p/\Z$.
\end{Corollary}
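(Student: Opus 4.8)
The plan is to show that Frobenius pullback multiplies each exponent by $p$ by tracking the residue action of the logarithmic operators directly through the comparison map $\gamma$, whose effect on generators was already computed in the proof of Proposition \ref{prop:frobeniusPullbackEquivalencePartialCompactification}. Since exponents are defined component-by-component at the generic point of each irreducible component of the boundary divisor, and since $F_{\overline{X}/k}$ carries the component $(t_1)$ of $D$ onto the component $(t_1\otimes 1)$ of $D^{(1)}$, I reduce to the local situation of that proof: $\overline{X}=\Spec A$, $X=\Spec A[t_1^{-1}]$, with $F:=F_{\overline{X}/k}$ sending the coordinate $u:=t_1\otimes 1$ on $\overline{X}^{(1)}$ to $t_1^p$; there the relevant boundary branch is cut out by a single coordinate. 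Writing $n=\rank\overline{E}$ and invoking Proposition \ref{prop:exponents}(a), I choose a basis $e_1,\dots,e_n$ of $\overline{E}$ adapted to the exponents, so that $\delta_u^{(m)}(e_i)\equiv \binom{\alpha_i}{m}e_i \bmod u\overline{E}$.

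The computation then proceeds as follows. The $\mathscr{D}_{\overline{X}/k}(\log D)$-action on $1\otimes e_i\in F^*\overline{E}=\mathcal{O}_{\overline{X}}\otimes_{F^{-1}\mathcal{O}_{\overline{X}^{(1)}}}F^{-1}\overline{E}$ is computed through $\gamma$, so the formula \eqref{eqn:frobOps} gives $\delta_{t_1}^{(1)}(1\otimes e_i)=0$ and $\delta_{t_1}^{(p^m)}(1\otimes e_i)=1\otimes\delta_u^{(p^{m-1})}(e_i)$ for $m\geq 1$. Writing $\delta_u^{(p^{m-1})}(e_i)=\binom{\alpha_i}{p^{m-1}}e_i+uf_i$ and observing that $1\otimes uf_i=t_1^p(1\otimes f_i)\in t_1^pF^*\overline{E}\subseteq t_1F^*\overline{E}$, I obtain
\[
\delta_{t_1}^{(p^m)}(1\otimes e_i)\equiv \binom{\alpha_i}{p^{m-1}}(1\otimes e_i)\bmod t_1F^*\overline{E}\qquad(m\geq 1),
\]
together with $\delta_{t_1}^{(1)}(1\otimes e_i)\equiv 0$. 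The delicate point here is exactly that the error term lands in $t_1^pF^*\overline{E}$, hence \emph{a fortiori} in $t_1F^*\overline{E}$, so that passing to the residue along $(t_1)$ — rather than along $(t_1^p)$ — loses nothing.

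It remains to read off the exponent of $1\otimes e_i$ from these eigenvalues, which is a statement about base-$p$ digits. Applying Proposition \ref{prop:exponents}(a) to $F^*\overline{E}$ yields a decomposition $F^*\overline{E}|_{(t_1)}=\bigoplus_{\beta\in\Z_p}G_\beta$ on which $\delta_{t_1}^{(p^m)}$ acts by $\binom{\beta}{p^m}$. By Lucas' theorem the residues $\binom{\beta}{p^m}\bmod p$ are precisely the base-$p$ digits of $\beta$, and multiplication by $p$ is a digit shift: $\binom{p\alpha_i}{1}\equiv 0$ and $\binom{p\alpha_i}{p^m}\equiv\binom{\alpha_i}{p^{m-1}}\bmod p$ for $m\geq 1$, matching the eigenvalues computed above. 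Hence the $G_\beta$-component of $1\otimes e_i$ can be nonzero only when $\binom{\beta}{p^m}=\binom{p\alpha_i}{p^m}$ for all $m\geq 0$, i.e.\ only when $\beta=p\alpha_i$; using the eigenspace decomposition in this way avoids having to control $\delta_{t_1}^{(m)}$ for general $m$, since it suffices to match eigenvalues on powers of $p$. Thus $1\otimes e_i\in G_{p\alpha_i}$, and as the $e_i$ form a basis the exponents of $F^*\overline{E}$ are $p\alpha_1,\dots,p\alpha_n$, proving the first assertion. The second follows by reduction modulo $\Z$: since $p\Z\subseteq\Z$, the assignment $\alpha\mapsto p\alpha$ descends to $\Z_p/\Z$, so applying the first part to any logarithmic extension of $E$ and invoking Proposition \ref{prop:exponents}(b) identifies the exponents of $F_{X/k}^*E$ in $\Z_p/\Z$ as $p\alpha_1,\dots,p\alpha_n$. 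The main obstacle is thus not analytic but purely combinatorial, namely the careful base-$p$ digit bookkeeping of this last paragraph, once the key operator identity \eqref{eqn:frobOps} and the $t_1$-versus-$t_1^p$ subtlety are in hand.
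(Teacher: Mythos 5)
Your proof is correct and follows essentially the same route as the paper: the paper's (very terse) proof derives the claim from exactly the two facts you use, namely the operator identity \eqref{eqn:frobOps} and the behaviour of the action on elements $a\otimes e$ of $F_{\overline{X}/k}^*\overline{E}$, with the key point that error terms land in $t_1^pF^*_{\overline{X}/k}\overline{E}\subseteq t_1F^*_{\overline{X}/k}\overline{E}$. Your Lucas-theorem digit bookkeeping, the eigenspace-matching argument via Proposition \ref{prop:exponents}, and the reduction mod $\Z$ for the second assertion are details the paper leaves implicit, and you fill them in correctly.
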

\begin{proof}
	The claim follows directly from the formula \eqref{eqn:frobOps} and
	the fact that $\delta_{t_1}^{(1)}$ acts on
	$F_{\overline{X}/k}^*\overline{E}=\mathcal{O}_{\overline{X}}\otimes_{F^{-1}\mathcal{O}_{\overline{X}}^{(1)}}
	\overline{E}$ via $\delta^{(1)}_{t_1}(a\otimes
	e)=\delta^{(1)}_{t_1}(a)\otimes e$.
\end{proof}
\begin{Remark}
	Note that multiplication by $p$ is an automorphism of the group
	$\Z_p/\Z$.
\end{Remark}

\begin{Theorem}\label{thm:partiallyCompactifiedTopInvariance}
	Let $(X,\overline{X})$, $(Y,\overline{Y})$ be good partial
	compactifications and $\bar{f}:\overline{Y}\rightarrow \overline{X}$ a
	universal homeomorphism such that $\bar{f}(X)\subset \overline{Y}$.
	If we write $f:=\bar{f}|_X$ then $f$ induces an equivalence
	\[f^*:\Strat^{\rs}( (X,\overline{X}))\xrightarrow{\cong} \Strat^{\rs}(
		(Y,\overline{Y})).\]
\end{Theorem}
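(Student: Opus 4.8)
The plan is to exhibit $f$ as being \emph{sandwiched} between two powers of the absolute Frobenius and then to bootstrap the Frobenius descent of Proposition~\ref{prop:frobeniusPullbackEquivalencePartialCompactification}. Since $\bar{f}$ is a universal homeomorphism, by \cite[18.12.11]{EGA4} it is finite, purely inseparable and surjective. Covering $\overline{X}$ by finitely many affines $\Spec A$ with $\bar{f}^{-1}(\Spec A)=\Spec B$ and $A\hookrightarrow B$ finite purely inseparable, finiteness of $B$ over $A$ yields a single $n$ (uniform over the finite cover) with $B^{p^n}\subseteq A$. The Frobenius ring map $b\mapsto b^{p^n}$ then factors as $B\to A\hookrightarrow B$, and this globalizes to a morphism $\bar{g}:\overline{X}\rightarrow \overline{Y}$ — a morphism over the $n$-th power of the Frobenius of $k$, hence only $F_k^n$-semilinear, but an isomorphism on underlying spaces as $k$ is perfect — satisfying $\bar{g}\circ\bar{f}=F^n_{\overline{Y}}$ and $\bar{f}\circ\bar{g}=F^n_{\overline{X}}$, where $F^n$ denotes the $n$-th absolute Frobenius. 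Because absolute Frobenius is the identity on underlying topological spaces, $\bar{f}$ and $\bar{g}$ are mutually inverse homeomorphisms; in particular $\bar{g}(X)=\bar{f}^{-1}(X)=Y$ and the two maps interchange the boundaries $D_X:=\overline{X}\setminus X$ and $D_Y:=\overline{Y}\setminus Y$. Thus $\bar{g}$ restricts to $g:=\bar{g}|_X:X\rightarrow Y$, and both $f,g$ are universal homeomorphisms.

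The hard part will be the second step: showing that $f^*$ and $g^*$ \emph{preserve regular singularity} with respect to the given partial compactifications. Since $\bar{f}$ is a homeomorphism with $\bar{f}^{-1}(D_X)=D_Y$, a section of $\mathcal{O}_{\overline{X}}$ invertible away from $D_X$ pulls back to one invertible away from $D_Y$; hence $\bar{f}$ is a morphism of the divisorial log schemes $(\overline{Y},D_Y)\rightarrow(\overline{X},D_X)$, and likewise for $\bar{g}$. Consequently, if $E\in\Strat^{\rs}((X,\overline{X}))$ has a $\log$-extension $\overline{E}$ (an $\mathcal{O}_{\overline{X}}$-coherent $\mathscr{D}_{\overline{X}/k}(\log D_X)$-module), then $\bar{f}^*\overline{E}$ is canonically an $\mathcal{O}_{\overline{Y}}$-coherent $\mathscr{D}_{\overline{Y}/k}(\log D_Y)$-module — coherent because $\bar{f}$ is finite, and a $\log$-$\mathscr{D}$-module by functoriality of log stratifications along morphisms of log schemes (cf.~\cite{Montagnon}) — which restricts to $f^*E$ on $Y$. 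This exhibits $f^*$ as a functor $\Strat^{\rs}((X,\overline{X}))\rightarrow\Strat^{\rs}((Y,\overline{Y}))$, and symmetrically for $g^*$. The real content is that the divided-power operators $\delta^{(m)}$ pull back correctly despite the failure of the naive chain rule in characteristic $p$; this is exactly the kind of computation carried out for the Frobenius in \eqref{eqn:frobOps}, and Corollary~\ref{cor:frobMultipliesExponents} confirms the compatibility on the level of exponents, multiplication by $p$ being an automorphism of $\Z_p/\Z$.

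With both functors defined on the regular singular categories, the conclusion is formal. The sandwiching identities give $g^*\circ f^*=(\bar{f}\circ\bar{g})^*=(F^n_{\overline{X}})^*$ as an endofunctor of $\Strat^{\rs}((X,\overline{X}))$, and $f^*\circ g^*=(F^n_{\overline{Y}})^*$ on $\Strat^{\rs}((Y,\overline{Y}))$. Factoring $F^n_{\overline{X}}=\pi_{\overline{X}}\circ F^{(n)}_{\overline{X}/k}$, with $\pi_{\overline{X}}:\overline{X}^{(n)}\rightarrow\overline{X}$ the semilinear base-change isomorphism, each absolute Frobenius pullback is the composite of the equivalence of Proposition~\ref{prop:frobeniusPullbackEquivalencePartialCompactification} with the (semilinear) equivalence $\pi_{\overline{X}}^*$, hence is itself an equivalence of categories; the same holds over $\overline{Y}$. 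A functor $f^*$ for which both composites $g^*\circ f^*$ and $f^*\circ g^*$ are equivalences admits both a left and a right quasi-inverse (necessarily isomorphic) and is therefore itself an equivalence, which proves the theorem.
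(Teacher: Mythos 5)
Your proof is correct, and its skeleton is the same as the paper's: sandwich $f$ between Frobenius morphisms and bootstrap the Frobenius descent of Proposition~\ref{prop:frobeniusPullbackEquivalencePartialCompactification} by a formal categorical argument. The differences in execution are worth recording. The paper stays $k$-linear throughout by using the \emph{relative} Frobenius: it constructs $\bar{g}:\overline{X}\rightarrow\overline{Y}^{(n)}$ with $\bar{g}\bar{f}=F^{(n)}_{\overline{Y}/k}$ and $\bar{f}^{(n)}\bar{g}=F^{(n)}_{\overline{X}/k}$, and then juggles three functors: $(gf)^*$ being an equivalence gives essential surjectivity of $f^*$; $(f^{(n)}g)^*$ being an equivalence gives fullness of $f^{(n),*}$, hence of $f^*$; and faithfulness comes from faithful flatness of $f$. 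Your absolute-Frobenius variant, with a semilinear $\bar{g}:\overline{X}\rightarrow\overline{Y}$, buys a cleaner ending---with only two functors in play, ``$g^*f^*$ and $f^*g^*$ are equivalences'' formally forces $f^*$ to be an equivalence, with no appeal to flatness or to the twisted morphism $f^{(n)}$---at the cost of semilinearity, which you correctly neutralize by factoring $F^n_{\overline{X}}=\pi_{\overline{X}}\circ F^{(n)}_{\overline{X}/k}$ and inserting the twist equivalence $\pi_{\overline{X}}^*$. Your second paragraph is a genuine addition rather than a difference: for either argument to make sense, $f^*$ and $g^*$ must be known to send regular singular objects to regular singular objects, i.e.~to be functors between the categories $\Strat^{\rs}(\cdot)$ at all; the paper's proof uses this silently, whereas you prove it by observing that $\bar{f}$ and $\bar{g}$ are morphisms of the divisorial log schemes (being homeomorphisms that interchange the boundaries), so that log-extensions pull back to log-extensions. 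Two small repairs there: coherence of $\bar{f}^*\overline{E}$ has nothing to do with finiteness of $\bar{f}$ (pullback always preserves coherence); and since the definition of regular singularity demands a \emph{torsion-free} extension, you should either divide $\bar{f}^*\overline{E}$ by its torsion subsheaf---which is a log-$\mathscr{D}$-submodule, being the kernel of the natural map $\bar{f}^*\overline{E}\rightarrow j_{Y*}f^*E$, where $j_Y:Y\hookrightarrow\overline{Y}$ is the open immersion---or note that $\bar{f}$ is automatically flat, being a finite surjective morphism of regular schemes, so that $\bar{f}^*\overline{E}$ embeds into $j_{Y*}f^*E$ and is already torsion free.
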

\begin{proof}
	The morphism $\bar{f}$ is finite of degree $p^n$ for some $n$,
	and thus there is a morphism $\bar{g}:\overline{X}\rightarrow
	\overline{Y}^{(n)}$, such that
	$\bar{g}\bar{f}=F_{\overline{Y}/k}^{(n)}$, and such that
	$\bar{f}^{(n)}\bar{g}=F_{\overline{X}/k}^{(n)}$.  Moreover,
	$\bar{g}(X)\subset Y^{(n)}$.  Write $g:=\bar{g}|_X$.  Then
	$(g{f})^*$ is an equivalence by Proposition
	\ref{prop:frobeniusPullbackEquivalencePartialCompactification},
	so $f^*$ is essentially surjective.  But $(f^{(n)}g)^*$ also
	is an equivalence, so $f^{(n),*}$ is full.  This shows that
	$f^*$ is full, and since $f$ is faithfully flat, it follows
	that $f$ is faithful as well.  This finishes the proof.
\end{proof}

\begin{Theorem}\label{thm:topologicalInvarianceGeneralCase}
	Let $f:Y\rightarrow X$ be a universal homeomorphism of smooth,
	separated, finite type $k$-schemes.  Then $f$ induces an
	equivalence
	\[f^*:\Strat^{\rs}(X)\xrightarrow{\cong}\Strat^{\rs}(Y).\]
\end{Theorem}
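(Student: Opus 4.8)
The plan is to factor the universal homeomorphism through a power of the relative Frobenius and then bootstrap from Theorem \ref{thm:partiallyCompactifiedTopInvariance}, which handles fixed compatible good partial compactifications, up to the intrinsic categories $\Strat^{\rs}(X)$ and $\Strat^{\rs}(Y)$. As a universal homeomorphism, $f$ is finite and purely inseparable of some degree $p^n$, so --- exactly as in the proof of Theorem \ref{thm:partiallyCompactifiedTopInvariance} --- there is a morphism $g:X\to Y^{(n)}$ with $gf=F^{(n)}_{Y/k}$ and $f^{(n)}g=F^{(n)}_{X/k}$, and $g$ is again a universal homeomorphism. Since pull-back along $F^{(n)}_{-/k}$ is an equivalence on $\Strat(-)$ by Theorem \ref{thm:cartier}, the composites $f^\ast g^\ast=(F^{(n)}_{Y/k})^\ast$ and $g^\ast(f^{(n)})^\ast=(F^{(n)}_{X/k})^\ast$ are equivalences, and the purely formal argument from the proof of Theorem \ref{thm:partiallyCompactifiedTopInvariance} then shows that $f^\ast:\Strat(X)\to\Strat(Y)$ is an equivalence of the full categories.

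The real content is to show that $f^\ast$ and its quasi-inverse preserve regular singularity, i.e.\ that $E\in\Strat(X)$ is regular singular if and only if $f^\ast E$ is. For this I would, given an arbitrary good partial compactification on one side, manufacture a compatible one on the other side to which Theorem \ref{thm:partiallyCompactifiedTopInvariance} applies. To prove that $f^\ast E$ is $(Y,\overline{Y})$-regular singular for a given $\overline{Y}$: first normalize the good partial compactification $\overline{Y}^{(n)}$ of $Y^{(n)}$ inside the intermediate field $K(Y^{(n)})\subseteq K(X)\subseteq K(Y)$ to obtain a normal compactification $\overline{X}_0$ of $X$ together with a universal homeomorphism $\bar f_0:\overline{Y}\to\overline{X}_0$ extending $f$ (here $\overline{Y}$, being smooth and finite over $\overline{Y}^{(n)}$ via relative Frobenius, is the normalization of $\overline{Y}^{(n)}$ in $K(Y)$, so $\overline{X}_0$ genuinely sits between them). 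Now $\overline{X}_0$ need not be smooth, but the normalization of a smooth scheme is regular in codimension $1$ and $k$ is perfect, so the locus where $\overline{X}_0$ fails to be smooth with strict normal crossings boundary has codimension $\geq 2$; discarding it yields a good partial compactification $(X,\overline{X}')$, and its preimage $\overline{Y}':=\bar f_0^{-1}(\overline{X}')\subseteq\overline{Y}$ is a good partial compactification of $Y$ whose complement in $\overline{Y}$ again has codimension $\geq 2$, as $\bar f_0$ is a finite homeomorphism.

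Applying Theorem \ref{thm:partiallyCompactifiedTopInvariance} to $\bar f':\overline{Y}'\to\overline{X}'$ shows that $E$ regular singular implies $E$ is $(X,\overline{X}')$-regular singular, hence $f^\ast E$ is $(Y,\overline{Y}')$-regular singular; extending the resulting logarithmic $\mathscr{D}$-module across the codimension-$\geq 2$ locus $\overline{Y}\setminus\overline{Y}'$ by \cite[Lemma~2.5]{Kindler/FiniteBundles} gives $(Y,\overline{Y})$-regular singularity, and as $\overline{Y}$ was arbitrary, $f^\ast E$ is regular singular. The symmetric construction --- normalizing a given $\overline{X}$ in $K(Y)$, restricting to the good locus, and invoking \cite[Lemma~2.5]{Kindler/FiniteBundles} on the $X$-side --- shows conversely that $f^\ast E$ regular singular implies $E$ regular singular. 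With preservation and reflection in hand the theorem assembles quickly: $f^\ast$ maps $\Strat^{\rs}(X)$ into $\Strat^{\rs}(Y)$ and is fully faithful, being the restriction to full subcategories of the equivalence $f^\ast:\Strat(X)\to\Strat(Y)$, and it is essentially surjective because any $G\in\Strat^{\rs}(Y)$ is isomorphic to $f^\ast E$ for some $E\in\Strat(X)$, which the reflection statement forces to be regular singular.

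I expect the main obstacle to be precisely the compatibility of compactifications above. Because resolution of singularities is unavailable in positive characteristic, one cannot simply match smooth strict-normal-crossings compactifications across $f$; the crucial idea is to exploit that good \emph{partial} compactifications need not be proper, so that one may normalize, throw away the (codimension $\geq 2$) singular and non-crossings locus, apply the fixed-compactification result of Theorem \ref{thm:partiallyCompactifiedTopInvariance}, and only then recover the extension over the discarded locus by the codimension-$\geq 2$ extension lemma. The delicate bookkeeping is to verify that the discarded loci really have codimension $\geq 2$ on both sides, so that \cite[Lemma~2.5]{Kindler/FiniteBundles} is legitimately applicable; this is where the regularity in codimension $1$ of normalizations and the fact that finite homeomorphisms preserve codimension do the essential work.
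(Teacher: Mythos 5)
Your proof is correct, and its skeleton is the same as the paper's: use the Frobenius factorization $gf=F^{(n)}_{Y/k}$, $f^{(n)}g=F^{(n)}_{X/k}$ to get the equivalence $f^*:\Strat(X)\rightarrow\Strat(Y)$ on the full categories, then reduce preservation and reflection of regular singularity to Theorem \ref{thm:partiallyCompactifiedTopInvariance} by manufacturing compatible good partial compactifications up to codimension $\geq 2$. Where you differ is in how those compactifications are produced, and your treatment is more symmetric: in both directions you take a global normalization inside the purely inseparable tower $K(Y^{(n)})\subseteq K(X)\subseteq K(Y)$ (normalizing $\overline{Y}^{(n)}$ in $K(X)$, resp.\ $\overline{X}$ in $K(Y)$), use normality plus perfectness of $k$ to see that the failure of smoothness and of strict normal crossings is concentrated in codimension $\geq 2$, discard that locus, apply Theorem \ref{thm:partiallyCompactifiedTopInvariance}, and recover the original compactification via the codimension-$\geq 2$ extension lemma. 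The paper argues asymmetrically: for ``$E$ regular singular $\Rightarrow f^*E$ regular singular'' it uses a valuation-theoretic argument ($R=\mathcal{O}_{\overline{Y},\eta}\cap k(X)$ is a DVR whose residue field has transcendence degree $\dim X-1$, hence is the local ring of a codimension-$1$ point on a model of $k(X)$), and for the converse it performs essentially your normalization, but only locally on an affine neighborhood of the generic point $\xi$ of the boundary, shrinking to the smooth locus; the insensitivity to codimension-$\geq 2$ modifications is then absorbed into the local-at-$\xi$ characterization of regular singularity rather than invoked as a global extension step. Your version buys uniformity and makes the codimension bookkeeping explicit; the paper's buys locality, never having to discuss global normalizations or where strict normal crossings fails. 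Both arguments rest on the same two facts you isolate: normalization in a purely inseparable function field extension is a finite universal homeomorphism, and regular singularity with respect to $(Y,\overline{Y})$ only depends on $\overline{Y}$ near the codimension-$1$ points of the boundary (the paper uses this implicitly at the same spots where you cite \cite[Lemma~2.5]{Kindler/FiniteBundles}, so you should make sure the statement you quote there indeed covers logarithmic $\mathscr{D}$-modules and not only honest stratified bundles).
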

\begin{proof}
	Without loss of generality we may assume that $X, Y$ are connected.
	The same argument as in the proof of Theorem
	\ref{thm:partiallyCompactifiedTopInvariance} shows that the fact that
	the relative Frobenius induces an equivalence $\Strat(X^{(n)})\rightarrow
	\Strat(X)$, implies that the functor
	$f^*:\Strat(X)\rightarrow \Strat(Y)$ is an equivalence.
%	First recall that $f^*: \Strat(X)\rightarrow \Strat(Y)$ is an
%	equivalence: $f$ is finite of degree $p^n$, and there is a morphism
%	$g:X\rightarrow Y^{(n)}$, such that $gf=F_{Y/k}^{(n)}$. Then
%	$f^*g^*:\Strat(Y^{(n)})\rightarrow \Strat(Y)$ is an equivalence by
%	Theorem \ref{thm:cartier}, so $g^*$ is essentially surjective and
%	full. It is also faithful, since $g$ is faithfully flat. Thus $g^*$ is
%	an equivalence, and hence $f^*$ is an equivalence.

	If follows that we just need to check that $f^*E$ is
	regular singular for $E\in \Strat(X)$, if and only if $E$ is regular
	singular. 

	Assume that $E$ is regular singular and let $(Y,\overline{Y})$ be a
	good partial compactification with $\overline{Y}\setminus Y$ smooth
	with generic point $\eta$. Using that $k(X)\subset k(Y)$ is purely
	inseparable, one quickly checks that
	$R:=\mathcal{O}_{\overline{Y},\eta}\cap k(X)$ is a discrete valuation
	ring, that $\mathcal{O}_{Y,\eta}$ is the integral closure of $R$ in
	$k(Y)$, and hence that the residue field of $R$ has transcendence 
 degree $\dim X - 1$ over
	$k$. This means that $R$ is the local ring of a codimension $1$ point
	on some model of $k(X)$. Hence there exists a good partial compactification
	$(X,\overline{X})$, such that $f$ extends to a morphism
	$\bar{f}:\overline{Y}\rightarrow\overline{X}$ (after possibly removing
	a closed subset of codimension $\geq 2$ from $\overline{Y}$). This shows that $f^*E$
	is $(Y,\overline{Y})$-regular singular. We repeat this for every good
	partial compactification $(Y,\overline{Y})$ to conclude that $f^*E$ is
	regular singular.

	Conversely, assume that $f^*E$ is regular singular.
	To prove that $E$ is regular singular we need to show that for every good partial
	compactification $(X,\overline{X})$, such that
	$\overline{X}\setminus X$ is smooth with generic point $\xi$,
	there exists an open neighborhood $\overline{U}'$ of $\xi$, such that
	$E$ is $(\overline{U}'\cap X, \overline{U}')$-regular singular. 
       % an
       % open subset $V\subset Y$, and a good partial compactification
       % $(V,\overline{V})$, such that $\bar{f}$ induces a universal
       % homeomorphism
       % \[\bar{g}:\overline{V}\rightarrow
       % 	\overline{U}\]
       % with $g(V)\subset U:=\overline{U}\cap X$.  We can then apply
       % Theorem \ref{thm:partiallyCompactifiedTopInvariance} to
       % finish.

	Let $\overline{U}=\Spec \overline{A}$ be an affine open neighborhood of
	$\xi$. We may assume that $U:=\overline{U}\cap X$ is also affine, say
	$U=\Spec A$.  Because $f$ is finite,
	$V:=f^{-1}(U)$ is affine, say $V=\Spec B$, and $f|_V$ is
	a universal homeomorphism.  Let $\overline{B}$ be the integral
	closure of $\overline{A}$ in $B$, and $\overline{V}:=\Spec
	\overline{B}$.  Then $\bar{g}:\overline{V}\rightarrow \overline{U}$ is a finite morphism.  By
	construction $\overline{V}$ is normal,
	and $\bar{g}$ is
	a universal homeomorphism, because $k(X)\subset k(Y)$ is purely
	inseparable.  We may shrink $\overline{U}$ around $\xi$
	to obtain an open neighborhood $\overline{U}'\subset
	\overline{U}$ of $\xi$, such that
	$\overline{V}':=\bar{g}^{-1}(\overline{U}')$ is
	smooth, and $\bar{g}':\overline{V}'\rightarrow \overline{U}'$ is
	a universal homeomorphism. Moreover, writing $V':=\overline{V}'\cap
	Y$, $U':=\overline{U}'\cap X$, we see that
	$\bar{g}'|_{V'}=f|_{V'}:V'\rightarrow U'$. Since $(f|_{V'})^*(E|_{U'})$ is
	$(V',\overline{V}')$-regular singular by assumption,  we can apply Theorem
	\ref{thm:partiallyCompactifiedTopInvariance}  to see that $E$ is
	$(U',\overline{U}')$-regular singular.
\end{proof}
%\bibliographystyle{amsalphacustomlabels}
%															%\bibliographystyle{smfalpha}
%\bibliography{alggeo}
%\input{documenta.bbl}

\providecommand{\bysame}{\leavevmode\hbox to3em{\hrulefill}\thinspace}
\providecommand{\MR}{\relax\ifhmode\unskip\space\fi MR }
% \MRhref is called by the amsart/book/proc definition of \MR.
\providecommand{\MRhref}[2]{%
  \href{http://www.ams.org/mathscinet-getitem?mr=#1}{#2}
}
\providecommand{\href}[2]{#2}

%%--------------------Here the manuscript ends--------------------------------
\Addresses
\end{document}